\theoremstyle{plain}
\newtheorem{thm}{Theorem} [section]
\newtheorem{lem}[thm]{Lemma}
\newtheorem{cor}[thm]{Corollary}
\newtheorem{prop}[thm]{Proposition}
\theoremstyle{definition}
\newtheorem{dfn}[thm]{Definition}
\newtheorem{ex}[thm]{Example}
\newtheorem{rem}[thm]{Remark}
\newcommand{\NN}{\mathbb N}
\newcommand{\ZZ}{\mathbb Z}
\newcommand{\CC}{\mathbb C}
\newcommand{\udim}{\underline{\dim}}
\newcommand{\ud}{\mathbf{d}}
\newcommand{\uc}{\mathbf{c}}
\newcommand{\ut}{\mathbf{t}}
\newcommand{\uj}{\mathbf{j}}
\newcommand{\uF}{\mathbf{F}}
\newcommand{\uB}{\mathbf{B}}
\newcommand{\un}{\mathbf{n}}
 \def\End{\mathop{\rm End}}
 \def\Hom{\mathop{\rm Hom}}
 \def\Rep{\mathop{\rm rep}}
 \def\Mod{\mathop{\rm mod}}
 \def\Gr{\mathop{\rm Gr}}
 \def\F{\mathop{\mathcal F}}
\def\GL{\mathop{\rm GL}}
\def\rk{\mathop{\rm rk}}
\def\id{\text{id}}
\begin{document}

\title[Quiver Grassmannians and Ringel-Hall algebras]{Euler characteristic of quiver Grassmannians and Ringel-Hall algebras of string algebras}

\author{Nicolas Poettering}
\address{Nicolas Poettering, Mathematisches Institut, Universität Bonn, Endenicher Allee 60, D-53115 Bonn, Germany}
\email{n.poettering@gmail.com}

\thanks{Supported by BIGS-Mathematics, Bonn and Mathematisches Institut of the University Bonn}

\begin{abstract} We compute the Euler characteristics of quiver Grassmannians and quiver flag varieties of tree and band modules and prove their positivity. This generalizes some results by G.C.\ Irelli \cite{Irelli}.

As an application we consider the Ringel-Hall algebra $C(A)$ of some string algebras $A$ and compute in combinatorial terms the products of arbitrary functions in $C(A)$.
\end{abstract}

\maketitle

\subsection*{Keywords} Euler characteristic, quiver Grassmannian, quiver flag variety, quiver representation,  Ringel-Hall algebra, string algebra, band module, string module, tree module

% \newpage\tableofcontents\newpage

\section{Introduction}

\subsection{Motivation} Fomin and Zelevinsky (see \cite{FZI}, \cite{FZII}, \cite{FZIV}) have introduced cluster algebras. For their studies the Euler characteristics of a class of projective varieties, called quiver Grassmannians, are important (see \cite{CC}, \cite{DWZII}). For instance, Caldero and Keller have shown in \cite{CK1} and \cite{CK2} that the Euler characteristic plays a central role for the categorification of cluster algebras.

\subsection{Basic concepts} We use and improve a technique of Irelli \cite{Irelli} to compute Euler characteristics of such projective varieties. In general it is hard to compute the Euler characteristic of a quiver Grassmannian, but in the case of tree and band modules we show that it is only a combinatorial task.

Let $Q$ be a quiver, $M$ a finite-dimensional representation of $Q$ over $\CC$ and $\ud$ a dimension vector of $Q$. Then the\textit{ quiver Grassmannian} ${\Gr}_{\ud}(M)$ is the complex projective variety of subrepresentations of $M$ with dimension vector $\ud$ (see Definition~\ref{dfn-qui-Gra}). Our aim is to compute its Euler characteristic $\chi_{\ud}(M)$. This computation can be simplified by certain algebraic actions of the one-dimensional torus $\CC^\ast$, since the Euler characteristic of a variety equals the Euler characteristic of the subset of fixed points under certain $\CC^\ast$-actions.

 To find suitable $\CC^\ast$-actions we introduce gradings of representations of $Q$ in Section~\ref{sec-Grad}. A map $\partial\colon E\to \ZZ$  with a basis $E$ of a representation $M$ is called a \textit{grading of the representation $M$}. A grading $\partial$ of a representation $M$ induces an action of $\CC^\ast$ on the vector space $M$. If this grading $\partial$ induces also an action on some locally closed subset $X$ of ${\Gr}_{\ud}(M)$, it is called \textit{stable} on $X$ (see Definition~\ref{dfn-stable}). The linear combinations of the basis vectors with the same values under a grading $\partial$ are called \textit{$\partial$-homogeneous}. For a locally closed subset $X\subseteq\Gr_{\ud}(M)$ let \begin{align}\label{gl-X-par}X^\partial:=\Big\{U\in X\Big|U\text{ has a vector space basis, which is $\partial$-homogeneous}\Big\}.\end{align}

\begin{samepage}\begin{thm}\label{thm-Grad} Let $Q$ be a quiver, $M$ a representation of $Q$, $X\subseteq\Gr_{\ud}(M)$ a locally closed subset and $\partial$ a stable grading on $X$. Then $X^\partial$ is a locally closed subset of $\Gr_{\ud}(M)$ and the Euler characteristic of $X$ equals the Euler characteristic of $X^\partial$. If the subset $X$ is a non-empty and closed in $\Gr_{\ud}(M)$, then $X^\partial$ is also non-empty and closed in $\Gr_{\ud}(M)$.
\end{thm}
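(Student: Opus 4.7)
The plan is to recognize $X^\partial$ as the fixed-point locus of a $\CC^\ast$-action on $X$ and then to invoke the classical equality of Euler characteristics of a $\CC^\ast$-variety with its fixed-point set.

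First, I would turn the grading $\partial\colon E\to\ZZ$ into a torus action. Setting $\rho(t)e:=t^{\partial(e)}e$ for $t\in\CC^\ast$ and $e\in E$ and extending linearly defines a homomorphism $\rho\colon\CC^\ast\to\GL(M)$; the corresponding weight decomposition of $M$ coincides with its decomposition into $\partial$-homogeneous subspaces $M_n$. This action induces an algebraic action on the Grassmannian of subspaces of $M$, and by the hypothesis that $\partial$ is stable on $X$ (Definition~\ref{dfn-stable}) it restricts to an algebraic action on $X\subseteq\Gr_{\ud}(M)$.

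Second, I would identify $X^\partial$ with the fixed-point locus $X^{\CC^\ast}$ of this action. A subspace $U\subseteq M$ is $\CC^\ast$-stable if and only if it decomposes as $U=\bigoplus_{n\in\ZZ}(U\cap M_n)$, i.e.\ if and only if it admits a $\partial$-homogeneous basis, which is exactly the condition in (\ref{gl-X-par}). Because the fixed-point set of an algebraic action of a diagonalizable group on a variety is closed, $X^\partial$ is closed in $X$, and therefore locally closed in $\Gr_{\ud}(M)$. If $X$ is closed in $\Gr_{\ud}(M)$ it is projective, and for any $x\in X$ the orbit morphism $\CC^\ast\to X$, $t\mapsto \rho(t)x$, extends by the valuative criterion of properness to $\mathbb{P}^1\to X$; the images of $0$ and $\infty$ are $\CC^\ast$-fixed, so $X^\partial\neq\emptyset$ and is closed in $\Gr_{\ud}(M)$.

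Third, for the equality $\chi(X)=\chi(X^\partial)$ I would invoke the classical fact that, for an algebraic $\CC^\ast$-action on a complex algebraic variety $Y$, one has $\chi(Y)=\chi(Y^{\CC^\ast})$. By additivity of the Euler characteristic on locally closed decompositions it suffices to show $\chi(Y\setminus Y^{\CC^\ast})=0$; stratifying this complement by stabilizer type, each stratum fibres over its orbit space with fibre $\CC^\ast$ or a finite quotient thereof, all of which have vanishing Euler characteristic.

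The main obstacle is precisely this last invariance statement: although it is classical, some care is required to set up the stratification by isotropy and handle the finite-stabilizer quotients cleanly, since $X$ is neither assumed smooth nor projective. Once this is in place, the theorem follows by combining it with the identification $X^\partial=X^{\CC^\ast}$ established in the second step.
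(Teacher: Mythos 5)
Your proposal follows essentially the same route as the paper: convert the grading $\partial$ into the $\CC^\ast$-action $\varphi_\partial$, identify $X^\partial$ with the fixed-point locus via the equivalence between $\CC^\ast$-invariance of a subspace and the existence of a $\partial$-homogeneous basis (the paper's Lemma~\ref{lem-stab-homo}), and then invoke the standard facts that the fixed locus is closed with the same Euler characteristic and is non-empty when the invariant set is non-empty and projective (the paper's Proposition~\ref{prop-Fixpkt}, citing Bialynicki-Birula and Borel). The only cosmetic difference is that you sketch the $\chi(Y)=\chi(Y^{\CC^\ast})$ step via a stratification by isotropy type, whereas the paper simply observes that the complement of the fixed locus fibres over its orbit space with $\CC^\ast$ fibres; both are the same classical argument.
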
\end{samepage}

This theorem can be used for more than one grading at the same time or in an iterated way. 

It is easy to see that \textit{relations} $I$ of a quiver $Q$ do not affect these studies. Let $M$ be a representation of a quiver $Q$ with relations $I$. So $M$ is also a representation of the quiver $Q$ without relations. Any subrepresentation of the representation $M$ of $Q$ is also a subrepresentation of the representation $M$ of $Q$ with the relations $I$. Thus the variety $\Gr_{\ud}(M)$ for a representation $M$ of a quiver $Q$ with relations $I$ equals the variety $\Gr_{\ud}(M)$ for a representation $M$ of a quiver $Q$ without relations.

\subsection{Tree and band modules} Some special morphisms of quivers $F\colon S\to Q$ are called \textit{windings of quivers} (for further details see Section~\ref{sec-mor-rep}). Each winding induces a functor $F_\ast\colon\Rep(S)\to\Rep(Q)$ of the categories of finite-dimensional representations and a map $\uF\colon\NN^{|S_0|}\to\NN^{|Q_0|}$ of the dimension vectors of $S$ and $Q$. If $S$ is a tree and every vector space of a representation $V$ of $S$ is one-dimensional, then the image of the representation $V$ under the functor $F_\ast$ is called a \textit{tree module} (see Definition~\ref{dfn-tree-mod}). Let $S$ be a quiver of type $\tilde A_{l-1}$ and $V=(V_i,V_a)_{i\in S_0,a\in S_1}$ a finite-dimensional representation of $S$. The representation $F_\ast(V)$ of $Q$ is called a \textit{band module} if every linear map $V_a$ is an isomorphism and $F_\ast(V)$ is indecomposable (see Definition~\ref{dfn-band-mod}).
\begin{samepage}\begin{thm}\label{thm-tree-band} Let $Q$ and $S$ be quivers and $\ud$ a dimension vector of $Q$.
\begin{enumerate}
 \item \label{part-tree-band-1} Let $F\colon S\to Q$ be a tree or a band and $V$ any $S$-representation. Then \begin{align}\label{gl-tree-band}\chi_{\ud}(F_\ast(V))={\sum}_{\ut\in\uF^{-1}(\ud)}\chi_{\ut}(V).\end{align}

 \item \label{part-tree-band-2} Let $S$ be a quiver of type $\tilde A_{l-1}$ with sources $\{i_1,\ldots,i_r\}$ and sinks $\{i'_1,\ldots,i'_r\}$, $\ut=(t_1,\ldots,t_l)$ a dimension vector of $S$, $V$ a band module of $S$ and $n:=\dim_{\CC}(V_i)$ for some $i\in S_0$. Then \begin{align}\label{gl-prop-band}
 &\chi_{\ut}\left(V\right)=\left(\prod_{k=1}^r\frac{(n-t_{i_k})!}{t_{i_k}!}\frac{t_{i'_k}!}{(n-t_{i'_k})!}\right)\left(\prod_{i=1}^l\frac1{(\varepsilon_i(t_i-t_{i+1}))!}\right)\end{align} with $0!=1$,  $r!=0$ and $\frac1{r!}=0$ for all negative $r\in\ZZ$ and $t_{l+1}=t_1$.
\end{enumerate}
\end{thm}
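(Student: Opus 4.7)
The plan is to apply Theorem~\ref{thm-Grad} in both parts, with different gradings in each.

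For part (\ref{part-tree-band-1}), fix a basis of each $V_s$, $s\in S_0$, and form their disjoint union $E$ as a basis of $F_\ast(V)$. Define a grading $\partial\colon E\to\ZZ$ by assigning every basis vector of $V_s$ the same integer $n_s$, where the $n_s$ are chosen so that for each arrow $a\in Q_1$ the difference $n_{s'}-n_s$ is the same for all lifts $\alpha\colon s\to s'$ in $F^{-1}(a)$. This can be arranged by propagating weights along the arrows of $S$: if $S$ is a tree there is no closing constraint, and if $S=\tilde A_{l-1}$ is a cycle the single cyclic constraint on the shifts $d_a\in\ZZ$ can be satisfied (with a generic choice making the $n_s$ distinct on each fiber $F^{-1}(j)$, if necessary by iterating Theorem~\ref{thm-Grad} with several such gradings). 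Under this condition, each map $F_\ast(V)_a$ is homogeneous of a fixed degree shift, hence $\partial$ is stable on $\Gr_{\ud}(F_\ast(V))$. A subrepresentation $U\subseteq F_\ast(V)$ admits a $\partial$-homogeneous basis iff $U_j=\bigoplus_{s\in F^{-1}(j)}U_s$ for subspaces $U_s\subseteq V_s$, and such $U$ is a subrepresentation of $F_\ast(V)$ iff $(U_s)_{s\in S_0}$ is a subrepresentation of $V$. The dimension condition becomes $\uF(\ut)=\ud$ with $\ut=(\dim U_s)_{s\in S_0}$, giving
\[
\Gr_{\ud}(F_\ast(V))^{\partial}\;=\;\bigsqcup_{\ut\in\uF^{-1}(\ud)}\Gr_{\ut}(V).
\]
Theorem~\ref{thm-Grad} then yields \eqref{gl-tree-band} on passing to Euler characteristics.

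For part (\ref{part-tree-band-2}), I would iterate Theorem~\ref{thm-Grad} on the band module $V$ itself. Choose bases of each $V_i$ compatible with the Jordan normal form of the monodromy around the cycle $\tilde A_{l-1}$, and apply a sequence of gradings that progressively separate the basis vectors. At each step the fixed-point locus decomposes as a disjoint union of simpler quiver Grassmannians, and the multiplicity of the pieces contributes a factorial factor. The factors $(n-t_{i_k})!/t_{i_k}!$ at the sources and $t_{i'_k}!/(n-t_{i'_k})!$ at the sinks come from the free choices of subspaces available at extremal vertices of the cyclic orientation, while the factors $1/(\varepsilon_i(t_i-t_{i+1}))!$ along each arrow reflect the rigid dimension jumps forced by the isomorphisms $V_a$ between consecutive vertices. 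When $\ut$ violates the monotonicity imposed by the arrow orientations, some factorial argument becomes negative and the whole product vanishes, matching the convention $1/r!=0$ for $r<0$.

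The main obstacle is part (\ref{part-tree-band-2}): the iteration of gradings must be designed so that at every stage the fixed-point set splits cleanly into quiver Grassmannians of a manageable type, and one must track the combinatorial contributions carefully enough to recover the closed product \eqref{gl-prop-band}. The cyclic nature of $\tilde A_{l-1}$ makes the final, cycle-closing step the most delicate piece of bookkeeping, which is the reason for the convention $t_{l+1}=t_1$.
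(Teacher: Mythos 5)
Your Part~\eqref{part-tree-band-1} strategy---choosing a nice grading that separates the fibres $F_0^{-1}(j)$ and identifying the fixed locus with $\bigsqcup_{\ut\in\uF^{-1}(\ud)}\Gr_{\ut}(V)$---is the same idea as the paper's (the paper runs it one separating grading at a time, factoring $F=G F'$ and iterating, but the end effect is the one you describe). However, the step you dispose of in a parenthesis, that a separating nice grading \emph{exists}, is exactly where the work is, and for bands it is not a generic-position triviality. A nice grading is a choice of shifts on $Q_1$ (not on $S_1$), and when $S$ is of type $\tilde A_{l-1}$ these shifts are constrained to sum to zero around the cycle. Whether that hyperplane avoids the kernel of every separating linear form $\partial(i)-\partial(j)$ depends on the winding not being periodic, i.e.\ on $F_\ast(V)$ being indecomposable; if it were periodic no nice grading could separate the periodic pair. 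This is precisely the content of the paper's Lemmas~\ref{lem-tree-gl-tree-band} and~\ref{lem-band-gl-tree-band}, the latter deriving a contradiction with indecomposability (Example~\ref{ex-band-per}). Your sketch never invokes the band hypothesis, so it cannot close this gap.

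Part~\eqref{part-tree-band-2} is not a proof at all as written. You announce an intent to iterate Theorem~\ref{thm-Grad} with a sequence of gradings separating Jordan basis vectors, and then interpret each factor in the formula post hoc, but no concrete grading, no fixed-locus identification, and no bookkeeping are supplied. The paper's actual argument is of a different nature: it fixes a Jordan-adapted basis with a nilpotent $\psi$ and a filtration $V^{(1)}\subsetneq\cdots\subsetneq V^{(n)}$ of $V$ by band submodules, stratifies $\Gr_{\ut}(V)$ by the Gaussian-elimination pivot tuple $\uj(U)$ at a chosen source $i_1$, shows each stratum fibres over a canonical slice with affine fibres (hence equal Euler characteristic), reduces recursively to Grassmannians of quotients $V_{\uj}/U_{\uj}$ which decompose as a smaller band module plus string modules (Lemma~\ref{lem-band}), and then pushes the recursion around the cycle source by source via the binomial identities of Lemma~\ref{lem-binomial} (Lemma~\ref{lem-band2}, Corollary~\ref{cor-band}). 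None of this is visible in your sketch, and it is not clear that a torus-fixed-point computation can be made to produce the closed formula \eqref{gl-prop-band} directly, since the relevant filtration pieces $V^{(m)}$ are not direct summands and hence not captured by an R-grading. You would need to exhibit the gradings, verify stability, and carry out the combinatorics; as it stands this part is a gap.
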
\end{samepage}
The formula in part~\eqref{part-tree-band-2} simplifies in the following way: Let $i\in S_0$. Then 
\begin{align*}
\frac1{(\varepsilon_i(t_i-t_{i+1}))!}=\left\{\begin{array}{cl}\frac1{|t_i-t_{i+1}|!}&\text{ if }t_{s(s_i)}\leq t_{t(s_i)},\\0&\text{ if }t_{s(s_i)}> t_{t(s_i)}.\end{array}\right.
\end{align*} In other words we have computed the Euler characteristics of quiver Grassmannians of all tree and band modules and proved their positivity in this theorem (see Corollary~\ref{cor-tree} and~\ref{cor-pos}).

\subsection{Quiver flag varieties} The projective variety $\F_{\ud^{(1)},\ldots,\ud^{(r)}}(M)$ of flags of subrepresentations of $M$ with dimension vectors $\ud^{(1)},\ldots,\ud^{(r)}$ is called \textit{quiver flag variety} (see Definition~\ref{dfn-qui-fla-var}). Theorem~\ref{thm-Grad} and~\ref{thm-tree-band} can also be generalized to analogous statements for such quiver flag varieties (see Corollary~\ref{cor-tree-band}).

\subsection{Ringel-Hall algebras}
 Let $A=\CC Q/I$ be a finite-dimensional $\CC$-algebra (for further details see Section~\ref{sec-quiver-path-alg}). We can associate to $A$ the \textit{Ringel-Hall algebra} $\mathcal H(A)$ and its subalgebra $C(A)$ (for further details see Section~\ref{sec-dfn-RH}).
 Let $X\subseteq{\Rep}_{\ud}(A)$ and $Y\subseteq{\Rep}_{\uc}(A)$ be locally closed and  $\GL(\CC)$-stable subsets. To consider the multiplication of the Ringel-Hall algebra $\mathcal H(A)$ we have to compute the Euler characteristic of the following locally closed subset of ${\Gr}_{\ud}(M)$ \begin{align*}\Big\{N\in{\Gr}_{\ud}(M)\Big|N\in X,M/N\in Y\Big\}.\end{align*}

Let $\uF=\left(F^{(1)},\ldots,F^{(r)}\right)$ be a tuple of trees, $\uB=\left(B^{(1)},\ldots,B^{(s)}\right)$ a tuple of bands and $\un=(n_1,\ldots,n_s)$ a tuple of positive integers (see Definition~\ref{dfn-tree-mod} and~\ref{dfn-band-mod}). Let $l(\uF)=r$, $l(\uB)=s$ be the lengths of the tuples and \begin{align*}\Eins_{\uF,\uB,\un}(M)=\left\{ \begin{array}{ll}
 1 & \text{if }\exists \lambda_1,\ldots,\lambda_s\in\CC^\ast, M\cong\bigoplus_{i=1}^rF^{(i)}_\ast(V)\oplus\bigoplus_{i=1}^sB^{(i)}_\ast(\lambda_i,n_i), \\
 0 & \text{otherwise.}\end{array}\right.\end{align*} This defines constructible functions in $\mathcal H(A)$, which are not necessarily in $C(A)$. The gradings used in the proof of Theorem~\ref{thm-tree-band} are also stable on the Grassmannians appearing in the product $\left(\Eins_{\uF,\uB,\un}\ast \Eins_{\uF',\uB',\un'}\right)(F_\ast(V))$ for some tree or band module $F_\ast(V)$. This simplifies the calculations of the Euler characteristics of these Grassmannians.

Let $\uF=\left(F^{(1)},\ldots,F^{(r)}\right)$ with $F^{(i)}\colon S^{(i)}\to Q$ be a tuple of windings. We define a set of tuples of windings by \begin{align*}\mathcal G(\uF)=\left\{\widetilde\uF=\left(\widetilde F^{(1)},\ldots,\widetilde F^{(r)}\right)\middle|F\widetilde F^{(i)}=F^{(i)}\ \forall i\right\}.\end{align*} 
Thus for all $i$ the following diagram commutes. \begin{align*}\vcenter{\begin{xy}\SelectTips{cm}{} \xymatrix@-0.5pc{S\ar[rr]^F&&Q\\&S^{(i)}\ar@{-->}[ul]^{\widetilde F^{(i)}}\ar[ur]_{F^{(i)}}}\end{xy}}\end{align*}

\begin{samepage}\begin{thm}\label{thm-RH} Let $A=\CC Q/I$ be a finite-dimensional algebra, $F\colon S\to Q$ a tree or a band and $V$ a $S$-representation such that $F_\ast(V)$ is an $A$-module. Let $\uF$ and $\uF'$ be tuples of trees, $\uB$ and $\uB'$ tuples of bands and $\un$ and $\un'$ tuples of positive integers. Then \begin{align}\left(\Eins_{\uF,\uB,\un}\ast \Eins_{\uF',\uB',\un'}\right)(F_\ast(V))={\sum}\left(\Eins_{\widetilde\uF,\widetilde\uB,\un}\ast \Eins_{\widetilde\uF',\widetilde\uB',\un'}\right)(V), \end{align} where the sum is over all $\widetilde\uF\in\mathcal G(\uF),\widetilde\uF'\in\mathcal G(\uF'),\widetilde\uB\in\mathcal G(\uB),\widetilde\uB'\in\mathcal G(\uB')$.
\end{thm}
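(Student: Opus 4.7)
The plan is to interpret the left-hand side as the Euler characteristic of a certain constructible subset of $\Gr_{\ud}(F_\ast(V))$ and then reduce it, via Theorem~\ref{thm-Grad}, to a fixed-point locus that is naturally indexed by the factorizations $\widetilde F^{(i)}$ and $\widetilde F'^{(j)}$ through $F$. By definition of the Ringel-Hall convolution,
\begin{align*}
\left(\Eins_{\uF,\uB,\un}\ast\Eins_{\uF',\uB',\un'}\right)(F_\ast(V)) = \chi\left(X_{F_\ast(V)}\right),
\end{align*}
where $X_{F_\ast(V)}\subseteq\Gr(F_\ast(V))$ is the locally closed subset of subrepresentations $N$ such that $N$ has decomposition type $(\uF,\uB,\un)$ and $F_\ast(V)/N$ has decomposition type $(\uF',\uB',\un')$. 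Similarly the right-hand side summands are Euler characteristics of analogous subsets $X_V^{(\widetilde\uF,\widetilde\uF',\widetilde\uB,\widetilde\uB')}$ inside $\Gr(V)$.

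The first step is to observe that the gradings $\partial$ on $F_\ast(V)$ constructed in the proof of Theorem~\ref{thm-tree-band} are stable not just on all of $\Gr_{\ud}(F_\ast(V))$ but also on the subset $X_{F_\ast(V)}$: conjugating by the torus action preserves isomorphism type of both $N$ and $F_\ast(V)/N$, so $X_{F_\ast(V)}$ is $\CC^\ast$-invariant, and stability of $\partial$ on $\Gr_{\ud}(F_\ast(V))$ therefore restricts. Applying Theorem~\ref{thm-Grad} iteratively for the collection of gradings used in Theorem~\ref{thm-tree-band}, we get
\begin{align*}
\chi(X_{F_\ast(V)}) = \chi\bigl(X_{F_\ast(V)}^{\partial_1,\ldots,\partial_m}\bigr).
\end{align*}

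The second and main step is to identify the fixed-point locus $X_{F_\ast(V)}^{\partial_1,\ldots,\partial_m}$ combinatorially. The gradings of Theorem~\ref{thm-tree-band} are chosen so that a simultaneously homogeneous subrepresentation $N\subseteq F_\ast(V)$ is generated by basis vectors of $F_\ast(V)$, and such an $N$ is in a canonical way of the form $\widetilde F_\ast(W)$ for some subquiver/winding $\widetilde F$ of $F$ applied to a subrepresentation $W$ of $V$. Concretely, a homogeneous submodule of $F_\ast(V)$ of decomposition type $(\uF,\uB,\un)$ corresponds to a choice, for each $i$, of a lifting $\widetilde F^{(i)}\in\mathcal G(\uF)_i$ together with a submodule of $V$ realizing $\widetilde F^{(i)}_\ast$ of the chosen component. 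The same analysis applied to the quotient provides the $\widetilde F'^{(j)}$-liftings and the $\widetilde B$- and $\widetilde B'$-liftings. Putting these together produces a decomposition
\begin{align*}
X_{F_\ast(V)}^{\partial_1,\ldots,\partial_m} = \bigsqcup_{(\widetilde\uF,\widetilde\uF',\widetilde\uB,\widetilde\uB')} X_V^{(\widetilde\uF,\widetilde\uF',\widetilde\uB,\widetilde\uB')}
\end{align*}
into locally closed pieces indexed exactly by tuples of factorizations in $\mathcal G(\uF)\times\mathcal G(\uF')\times\mathcal G(\uB)\times\mathcal G(\uB')$.

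The final step is to take Euler characteristics on both sides and recognize that $\chi(X_V^{(\widetilde\uF,\widetilde\uF',\widetilde\uB,\widetilde\uB')})$ is precisely $(\Eins_{\widetilde\uF,\widetilde\uB,\un}\ast \Eins_{\widetilde\uF',\widetilde\uB',\un'})(V)$ by the definition of the Ringel-Hall convolution applied to the module $V$ over the path algebra of $S$. The main obstacle is the second step: one has to verify carefully that every simultaneously $\partial_k$-homogeneous submodule of $F_\ast(V)$ is canonically of the form $\widetilde F_\ast(W)$ for a unique winding $\widetilde F$ factoring $F$, and that this identification is compatible with indecomposable decomposition so that the types $(\uF,\uB,\un)$ of $N$ and $(\uF',\uB',\un')$ of $F_\ast(V)/N$ translate correctly into types of $W$ and $V/W$ under the liftings. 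This is essentially a structural refinement of the combinatorial argument that yields formula~\eqref{gl-tree-band}, which already matches homogeneous subrepresentations of $F_\ast(V)$ with subrepresentations of $V$ via fibers of $\uF$.
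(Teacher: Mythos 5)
Your overall strategy matches the paper's: interpret the convolution as the Euler characteristic of a constructible subset of the quiver Grassmannian of $F_\ast(V)$, apply the gradings that prove Theorem~\ref{thm-tree-band} via Theorem~\ref{thm-Grad}, and decompose the fixed-point locus by liftings through $F$. The paper compresses this into a citation of Theorem~\ref{thm-tree-band} together with Lemmas~\ref{lem-stab-tree} and~\ref{lem-stab-band}, which are exactly the statements that the nice gradings remain stable on the relevant sub-loci.

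There is, however, a genuine gap in your justification of the first step. You assert that the $\CC^\ast$-action ``preserves isomorphism type of both $N$ and $F_\ast(V)/N$.'' This is true for tree (and string) summands, which is the content of Lemma~\ref{lem-stab-tree}, but it is \emph{false} for band module summands: the example immediately following Lemma~\ref{lem-stab-band} exhibits a nice grading whose $\CC^\ast$-action moves a band module $B_\ast(\lambda,n)$ to a non-isomorphic one $B_\ast(\lambda',n)$. The set $X_{F_\ast(V)}$ is still $\CC^\ast$-invariant, but for a subtler reason: the functions $\Eins_{\uF,\uB,\un}$ are by construction insensitive to the band parameters $\lambda_i$, so they only record the band $B^{(i)}$ and the integer $n_i$, both of which \emph{are} preserved by the action. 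This is precisely why Lemma~\ref{lem-stab-band} is phrased in terms of the union over all band parameters, and why the theorem itself defines $\Eins_{\uF,\uB,\un}$ with an existential over the $\lambda_i$. Without invoking this (or Lemmas~\ref{lem-stab-tree}/\ref{lem-stab-band}), the stability claim does not follow from what you wrote. A secondary imprecision: your description of the fixed-point locus as consisting of submodules ``generated by basis vectors of $F_\ast(V)$'' is only correct when $\dim_\CC(V_i)=1$ for all $i$; for general $S$-representations $V$ the gradings reduce $F$ to a winding with injective $F_0$, after which the homogeneous submodules correspond to arbitrary (not merely coordinate) submodules of $V$, organized into the fibers of $\uF$. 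The decomposition by liftings $\widetilde F^{(i)}$ is the right picture, but the argument should go through the inductive reduction of the winding rather than through a basis-vector description of the fixed submodules.
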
\end{samepage}

 The functions $\Eins_{\uF,\uB,\un}$, $\Eins_{\uF',\uB',\un'}$ and the corresponding products are in $\mathcal H(A)$. The other functions $\Eins_{\widetilde\uF,\widetilde\uB,\un}$, $\Eins_{\widetilde\uF',\widetilde\uB',\un'}$ and corresponding products in this case are in $\mathcal H(\CC S)$. So this theorem shows: To calculate $\left(\Eins_{\uF,\uB,\un}\ast \Eins_{\uF',\uB',\un'}\right)(F_\ast(V))$ it is enough to consider some combinatorics and $S$-representations, where $S$ is a tree or a quiver of type $\tilde A_{l-1}$.

Actually for a string algebra $A=\CC Q/I$ (see Section~\ref{sec-dfn-string-alg}) the computation of arbitrary products of functions in $C(A)$ is reduced to a purely combinatorial task (see Corollary~\ref{cor-RH-pur-com}).

\subsection{} The paper is organized as follows: First we define our main objects in Section~\ref{sec-quiver}, then we explain our results in Section~\ref{sec-main-res}. After that we introduce the gradings as a useful tool in Section~\ref{sec-Grad} and then we prove Theorem~\ref{thm-Grad} in Section~\ref{sec-thm-Grad}. Both are used to prove Theorem~\ref{thm-tree-band}, Theorem~\ref{thm-RH} and the results of Section~\ref{sec-main-res} in the remaining sections.

\section{Main definitions}\label{sec-quiver}
\subsection{Quivers and path algebras}\label{sec-quiver-path-alg}
Let $Q=(Q_0,Q_1,s,t)$ be a \textit{quiver}, i.e.\ a finite oriented graph with vertex set $Q_0$, arrow set $Q_1$ and maps $s,t\colon Q_1\to Q_0$ indicating the start and terminal point of each arrow. A finite-dimensional \textit{representation} $M=(M_i,M_a)_{i\in Q_0,a\in Q_1}$ of $Q$ (or \textit{$Q$-representation} for short) is a set of finite-dimensional $\CC$-vector spaces $\{M_i|i\in Q_0\}$ and a set of $\CC$-linear maps $\{M_a\colon M_{s(a)}\to M_{t(a)}|a\in Q_1\}$. A \textit{morphism} $f=(f_i)_{i\in Q_0}$ of $Q$-representations from $M$ to $N$ is a set of $\CC$-linear maps $\{f_i\colon M_i\to N_i|i\in Q_0\}$ such that $f_{t(a)}M_a=N_af_{s(a)}$ for all $a\in Q_1$. Let $\Rep(Q)$ denote the \textit{category of finite-dimensional $Q$-representations.}

 A \textit{subrepresentation} $N=(N_i)_{i\in Q_0}$ of $M=(M_i,M_a)_{i\in Q_0,a\in Q_1}$ is a set of subspaces $\{N_i\subseteq M_i|i\in Q_0\}$ such that $M_a(N_{s(a)})\subseteq N_{t(a)}$ for all $a\in Q_1$. So every subrepresentation $N=(N_i)_{i\in Q_0}$ of a $Q$-representation $M=(M_i,M_a)_{i\in Q_0,a\in Q_1}$ is again a $Q$-representation by $(N_i,M_a|_{N_{s(a)}})_{i\in Q_0,a\in Q_1}$. In this case we write $N\subseteq M$. The \textit{dimension} of a $Q$-representation $M$ is $\dim(M):=\sum_{i\in Q_0}\dim_{\CC}(M_i)$ and its \textit{dimension vector} is the tuple $\udim(M):=(\dim_{\CC}(M_i))_{i\in Q_0}\in\NN^{|Q_0|}$.

 Let $Q$ be a quiver. An \textit{oriented path} $\rho=a_1\ldots a_n$ of $Q$ is the concatenation of some arrows $a_1,\ldots,a_n\in Q_1$ such that $t(a_{i+1})=s(a_i)$ for all $1\leq i<n$. Additionally we introduce a path $e_i$ of length zero for each vertex $i\in Q_0$. The \textit{path algebra} $\CC Q$ of a quiver $Q$ is the $\CC$-vector space with the set of oriented paths as a basis. The product of basis vectors is given by the concatenation of paths if possible or by zero otherwise. It is well known that the \textit{category $\Mod(\CC Q)$ of finite-dimensional $\CC Q$-modules} is equivalent to the category $\Rep(Q)$. So we can think of $Q$-representations as $\CC Q$-modules and vice versa.

Let $\NN_{>0}=\NN-\{0\}$. Let $Q$ be a quiver and $\CC Q^+$ the ideal in the path algebra $\CC Q$, which is generated by all arrows in $Q_1$ of $Q$. An ideal $I$ of the path algebra $\CC Q$ is called \textit{admissible} if a $k\in\NN_{>0}$ exists such that $(\CC Q^+)^k\subseteq I\subseteq (\CC Q^+)^2$. In this case, $A=\CC Q/I$ is a finite-dimensional $\CC$-algebra such that the isomorphism classes of simple representations are in bijection with the vertices of the quiver $Q$. Let $\Mod(A)$ be the\textit{ category of finite-dimensional $A$-modules}. Again we can think of $A$-modules as $Q$-representations and some $Q$-representations as $A$-modules.

 An expanded introduction to finite-dimensional algebras over an arbitrary field can be found in \cite{ASS}.

\subsection{Quiver Grassmannians}
\begin{dfn}\label{dfn-qui-Gra} Let $Q$ be a quiver, $M$ a $Q$-representation and $\ud$ a dimension vector. Then the closed subvariety \begin{align*}{\Gr}_{\ud}(M):=\Big\{U\subseteq M\Big|\udim(U)=\ud\Big\}\end{align*} of the classical Grassmannian is called the \textit{quiver Grassmannian}.
\end{dfn}

 Hence this is a projective complex variety, which is by \cite{Schof-general} in general neither smooth nor irreducible. We denote the Euler characteristic of a quasi-projective variety $X$ by $\chi(X)$ and the Euler characteristic of ${\Gr}_{\ud}(M)$ by $\chi_{\ud}(M)$ for short.

\begin{ex}
 Let $Q=1\stackrel a\longrightarrow2$, $M_1=M_2=\CC^2$ and $M_a\colon M_1\to M_2$ a linear map with $\rk(M_a)=1$. Then $M=(M_1,M_2,M_a)$ is a $Q$-representation such that ${\Gr}_{(1,1)}(M)$ can be described as $(\{\ast\}\times\mathbb P^1)\cup(\mathbb P^1\times\{\ast\})\subseteq\mathbb P^1\times\mathbb P^1$. This projective variety is neither smooth nor irreducible and $\chi_{(1,1)}(M)=3$.
\end{ex}

\begin{prop}[Riedtmann \cite{Riedtmann}]\label{prop-dir-sum} Let $Q$ be a quiver, $\ud$ a dimension vector and $M$ and $N$ $Q$-representations. Then 
\begin{align}
 \chi_{\ud}(M\oplus N)={\sum}_{0\leq \uc\leq\ud}\chi_{\uc}(M)\chi_{\ud-\uc}(N)
\end{align} with $(c_i)_{i\in Q_0}=\uc\leq\ud=(d_i)_{i\in Q_0}$ if and only if $c_i\leq d_i$ for all $i\in Q_0$.
\end{prop}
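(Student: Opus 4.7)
The plan is to deduce the proposition directly from Theorem~\ref{thm-Grad} by cooking up a single grading whose homogeneous subrepresentations are exactly the ones that split as direct sums of subrepresentations of $M$ and $N$.

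First I would choose a basis $E_M$ of $M$ and a basis $E_N$ of $N$, compatible with the vertex decompositions, so that $E=E_M\sqcup E_N$ is a basis of $M\oplus N$. Define a grading $\partial\colon E\to\ZZ$ by $\partial(v)=0$ for $v\in E_M$ and $\partial(v)=1$ for $v\in E_N$. The induced $\CC^\ast$-action on $M\oplus N$ is then simply $t\cdot(m,n)=(m,tn)$. Since every structure map of $M\oplus N$ is block-diagonal with respect to the decomposition $M\oplus N$, it commutes with this action; hence $\CC^\ast$ acts on $\Gr_{\ud}(M\oplus N)$ and $\partial$ is stable on $X:=\Gr_{\ud}(M\oplus N)$ in the sense of the paper.

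Next I would identify $X^\partial$. A subrepresentation $U\subseteq M\oplus N$ lies in $X^\partial$ iff it admits a basis of $\partial$-homogeneous vectors; because $\partial$ takes only the two values $0$ and $1$, such a basis consists of vectors lying entirely in $M$ or entirely in $N$, which forces $U=(U\cap M)\oplus(U\cap N)$. Conversely, any such decomposition yields a $\partial$-homogeneous basis. Hence
\begin{align*}
X^\partial=\bigsqcup_{\substack{0\le \uc\le\ud}}\Gr_{\uc}(M)\times\Gr_{\ud-\uc}(N),
\end{align*}
the disjoint union being locally closed inside $\Gr_{\ud}(M\oplus N)$ via $(U_M,U_N)\mapsto U_M\oplus U_N$.

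Finally I would combine Theorem~\ref{thm-Grad}, which gives $\chi(X)=\chi(X^\partial)$, with the additivity of the Euler characteristic on a finite disjoint union of locally closed subsets and its multiplicativity on products of complex varieties, to obtain
\begin{align*}
\chi_{\ud}(M\oplus N)=\chi(X^\partial)=\sum_{0\le \uc\le\ud}\chi_{\uc}(M)\,\chi_{\ud-\uc}(N).
\end{align*}
The only non-routine step is verifying that $U\in X^\partial$ is genuinely equivalent to $U=(U\cap M)\oplus(U\cap N)$; this is a small linear-algebra check using that a $\CC^\ast$-fixed subspace of a two-weight representation splits along the weight decomposition. Everything else is formal application of the general principle established in Theorem~\ref{thm-Grad}.
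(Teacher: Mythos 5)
Your proposal is correct and follows essentially the same route as the paper: the grading you define (value $0$ on $E_M$, value $1$ on $E_N$) is precisely an R-grading for the decomposition $M\oplus N$, which is the grading the paper invokes, and the rest of your argument (stability via block-diagonality, identification of the fixed-point set as $\bigsqcup_{\uc}\Gr_{\uc}(M)\times\Gr_{\ud-\uc}(N)$, then additivity and multiplicativity of $\chi$) matches the paper's proof in Section~\ref{sec-proof-prop-dir-sum}.
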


Thus it is enough to consider the Euler characteristic of Grassmannians associated to indecomposable representations.

\subsection{Tree and band modules}
\label{sec-mor-rep}

 Let $Q=(Q_0,Q_1,s,t)$ and $S=(S_0,S_1,s',t')$ be two quivers. A \textit{winding of quivers} $F\colon S\to Q$ (or \textit{winding} for short) is a pair of maps $F_0\colon S_0\to Q_0$ and $F_1\colon S_1\to Q_1$ such that the following holds:\begin{enumerate}
\item $F$ is a morphism of quivers, i.e.\ $sF_1=F_0s'$ and $tF_1=F_0t'$.

\item If $a,b\in S_1$ with $a\neq b$ and $s'(a)=s'(b)$, then $F_1(a)\neq F_1(b)$.

\item If $a,b\in S_1$ with $a\neq b$ and $t'(a)=t'(b)$, then $F_1(a)\neq F_1(b)$.
\end{enumerate}
   This generalizes Krause's definition of a winding \cite{Krause}. Let $V$ be a $S$-representation. For $i\in Q_0$ and $a\in Q_1$ set \begin{align*}(F_\ast(V))_i=\bigoplus_{j\in F_0^{-1}(i)}V_j\text{\qquad and\qquad}(F_\ast(V))_a=\bigoplus_{b\in F_1^{-1}(a)}V_b.\end{align*} This induces a functor $F_\ast\colon\Rep(S)\to\Rep(Q)$ and a map of dimension vectors $\uF\colon\NN^{|S_0|}\to\NN^{|Q_0|}$.

A simply connected quiver $S$ is called a \textit{tree}, i.e.\ for any two vertices in $S$ exists a unique not necessarily oriented path from one vertex to the other.
\begin{dfn}\label{dfn-tree-mod} Let $Q$ and $S$ be quivers and $F\colon S\to Q$ a winding. Let $V$ be a $S$-representation with $\dim_{\CC}(V_i)=1$ for all $i\in S_0$ and $V_a\neq0$ for all $a\in S_1$. If $S$ is a tree, then the representation $F_\ast(V)$ is called a \textit{tree module}. We call such a winding $F$ a \textit{tree}, too.
\end{dfn}
Each tree module $F_\ast(V)$ is indecomposable and described up to isomorphism uniquely by the winding $F\colon S\to Q$.
\begin{ex}
 Let $Q$, $S$ and $F$ be described by the following picture.
\begin{align*}F\colon S=\left(\vcenter{\begin{xy}\SelectTips{cm}{}\xymatrix@-0.4pc{
 1\ar[dr]_\alpha&2\ar[d]_<<\beta&3\ar[dl]_\gamma\\
  &3'\ar[r]^{\gamma'}&3''
  }\end{xy}}\right)\to Q=\left(\vcenter{\begin{xy}\SelectTips{cm}{}\xymatrix@-0.4pc{
  1\ar[rd]_\alpha&2\ar[d]^<<\beta\\&3\ar@(ur,dr)[]^\gamma}\end{xy}}\right) \end{align*} Let $V$ and $F_\ast(V)$ be described by the following pictures. \begin{align*}V=\left(\vcenter{\begin{xy}\SelectTips{cm}{}\xymatrix@-0.4pc{
 \CC\ar[dr]_1&\CC\ar[d]_1&\CC\ar[dl]_1\\
  &\CC\ar[r]^1&\CC
  }\end{xy}}\right),\quad F_\ast(V)=\left(\vcenter{\begin{xy}\SelectTips{cm}{}\xymatrix@-0.4pc{
  \CC\ar[rd]_{\left(\begin{smallmatrix} 0\\1\\0 \end{smallmatrix}\right)}&\CC\ar[d]^<<{\left(\begin{smallmatrix} 0\\1\\0 \end{smallmatrix}\right)}\\&\CC^3\ar@(ur,dr)[]^{\left(\begin{smallmatrix} 0&0&0\\1&0&0\\0&1&0 \end{smallmatrix}\right)}}\end{xy}}\right) \end{align*} Then $F\colon S\to Q$ is a tree and $F_\ast(V)$ a tree module.
\end{ex}

A quiver $S$ is called \textit{of type $A_l$} for some $l\in\NN_{>0}$ if $S_0=\{1,\ldots,l\}$ and $S_1=\{s_1,\ldots,s_{l-1}\}$ such that for all $i\in S_0$ with $i\neq l$ there exists a $\varepsilon_i\in\{-1,1\}$ with 
$s(s_i^{\varepsilon_i})=i+1$ and $t(s_i^{\varepsilon_i})=i$. (We use here the convention $s(a^{-1})=t(a)$ and $t(a^{-1})=s(a)$ for all $a\in S_1$.) Figure~\ref{fig-Al} visualizes a quiver $S$ of type $A_l$.
\begin{figure}[ht]
$$\vcenter{\begin{xy}\SelectTips{cm}{}\xymatrix@-0.5pc{1&2\ar[l]_{s_1^{\varepsilon_1}}&3\ar[l]_{s_2^{\varepsilon_2}}&\cdots\ar[l]&l-1\ar[l]&l\ar[l]_{\quad s_{l-1}^{\varepsilon_{l-1}}}} \end{xy}}$$
\caption{A quiver of type $A_l$}
\label{fig-Al}
\end{figure}

\begin{dfn}\label{dfn-string-mod} Let $Q$ and $S$ be quivers, $S$ of type $A_l$, $F\colon S\to Q$ a winding and $F_\ast(V)$ a tree module. Then $F$ is called a \textit{string} and $F_\ast(V)$ is called a \textit{string module}.
\end{dfn}

 A quiver $S$ is called \textit{of type $\tilde A_{l-1}$} for some $l\in\NN_{>0}$ if $S_0=\{1,\ldots,l\}$ and $S_1=\{s_1,\ldots,s_l\}$ such that for all $i\in S_0$ a $\varepsilon_i\in\{-1,1\}$ exists with $s(s_i^{\varepsilon_i})=i+1$ and $t(s_i^{\varepsilon_i})=i$. (We set $l+i:=i$ in $S_0$.) We draw a picture of a quiver of type $\tilde A_{l-1}$ in Figure~\ref{fig-tAl}.

\begin{dfn}\label{dfn-band-mod} Let $Q$ and $S$ be quivers, $B\colon S\to Q$ a winding and $V$ a $S$-representation. If $S$ is of type $\tilde A_{l-1}$, $V_a$ is an isomorphism for all $a\in S_1$ and $B_\ast(V)$ is indecomposable, then $B_\ast(V)$ is called a \textit{band module}. $B$ is called a \textit{band} if an indecomposable band module $B_\ast(V)$ exists.
\end{dfn}

Let $S$ be a quiver of type $\tilde A_{l-1}$, $B\colon S\to Q$ a winding and $V$ a $S$-representation with $\dim(V_i)=1$ for all $i\in S_0$ and $V_a\neq0$ for all $a\in S_1$.  The module $B_\ast(V)$ is not necessarily indecomposable. This is a well known problem, which is explained in the following examples.

\begin{ex}\label{ex-band-zerl} 
 Let $Q$ and $S$ be quivers, $S$ of type $\tilde A_{l-1}$, $B\colon S\to Q$ a winding such that no integer $r$ exists with $1\leq r<l$,  $B_1(s_i)=B_1(s_{i+r})$ and $\varepsilon_i=\varepsilon_{i+r}$ for all $1\leq i\leq l$. (We set $s_{l+i}:=s_i$ in $S_1$ and $\varepsilon_{l+i}:=\varepsilon_i$ for all $i\in S_0$.) Let $V$ be a $S$-representation such that $V_i=\CC^n$ for all $i\in S_0$, $V_{s_i}=\id_{\CC^n}$ for all $i\in S_0$ with $i\neq 1$ and the Jordan normal form of the map $V_{s_1}$ is an indecomposable Jordan matrix. Then $B_\ast(V)$ is indecomposable.\end{ex}

 \begin{ex}\label{ex-band-per}
 Let $Q$ and $S$ be quivers, $S$ of type $\tilde A_{l-1}$, $B\colon S\to Q$ a winding such that an integer $r$ as above exists. Let $V$ be a $S$-representation with $V_{s_i}$ is an isomorphism for all $i\in S_0$. Then $B_\ast(V)\cong\bigoplus_{i=1}^r M^{(i)}$ with $Q$-representations $M^{(i)}$ of dimension $\frac1r\dim(V)$.\end{ex}

\begin{rem}\label{rem-eind-band}
Let $r\in\NN_{>0}$. Using the Jordan normal form, the indecomposable modules of the polynomial ring $\CC[T,T^{-1}]$ of dimension $r$ are canonically parametrized by $\mathbb C^\ast$. Let $\varphi_r\colon\mathbb C^\ast\to\Mod(\CC[T,T^{-1}])$ describe this parametrization and $\varphi\colon\CC^\ast\times\NN_{>0}\to\Mod(\CC[T,T^{-1}])$ with $\varphi(\lambda,r)=\varphi_r(\lambda)$.

Let $B\colon S\to Q$ be a band and $\Mod(\CC[T,T^{-1}])$ the category of finite-dimensional $\CC[T,T^{-1}]$-modules. There exists a full and faithful functor $F\colon\Mod(\CC[T,T^{-1}])\to \Rep(S)$ such that for any representation $V$ in the image of this functor and any $a\in S_1$  the linear map $V_a$ is an isomorphism.

The map $\CC^\ast\times\NN_{>0}\stackrel{\varphi}\longrightarrow\Mod(\CC[T,T^{-1}])\stackrel{F}\longrightarrow\Rep(S)\stackrel{B_\ast}\longrightarrow\Rep(Q)$ is a parametrization of all band modules of the form $B_\ast(V)$. The image of $(\lambda,r)\in\CC^\ast\times\NN_{>0}$ under this map is denoted  $B_\ast(\lambda,r)$. Additional we define $B_\ast(\lambda,0)=0$ for all $\lambda\in\CC^\ast$. We remark that neither the functor $F$ nor our parametrization of band modules of the form $B_\ast(V)$ is unique.

Let $\lambda\in\CC^\ast$ and $r,s\in\NN$ with $r\geq s$. Then a surjective morphism $B_\ast(\lambda,r)\twoheadrightarrow B_\ast(\lambda,s)$ and a injective morphism $B_\ast(\lambda,s)\hookrightarrow B_\ast(\lambda,r)$ exists. Let $\varphi\colon B_\ast(\lambda,r)\to B_\ast(\lambda,s)$ be such a morphism. Then the kernel and the image of $\varphi$ are independent of $\varphi$. So for all $r,s\in\NN$ with $r\geq s$ exists a unique sub- and a unique factormodule of $B_\ast(\lambda,r)$ isomorphic to $B_\ast(\lambda,s)$.
\end{rem}

 \begin{ex}\label{ex-band} Let $Q=(\{\circ\},\{\alpha,\beta\},s,t)$,  $\lambda\in\CC^\ast$ and $B$ the band described by the following picture. \begin{align*}B\colon S=\left(\vcenter{\begin{xy}\SelectTips{cm}{}\xymatrix@-1pc{
&1\ar[dl]_\beta\ar[dr]^\alpha\\
2\ar[dr]_{\alpha'}&&3\ar[dl]^{\beta'}\\
&4}\end{xy}}\right)\to Q=\Big(\vcenter{\begin{xy}\SelectTips{cm}{}\xymatrix@-1pc{\circ\ar@(ld,lu)[]^\alpha\ar@(rd,ru)[]_\beta}\end{xy}}\Big)\end{align*} In this case we can assume that the band module $B_\ast(\lambda,3)$ can be visualized by Figure~\ref{fig-band-mod}.
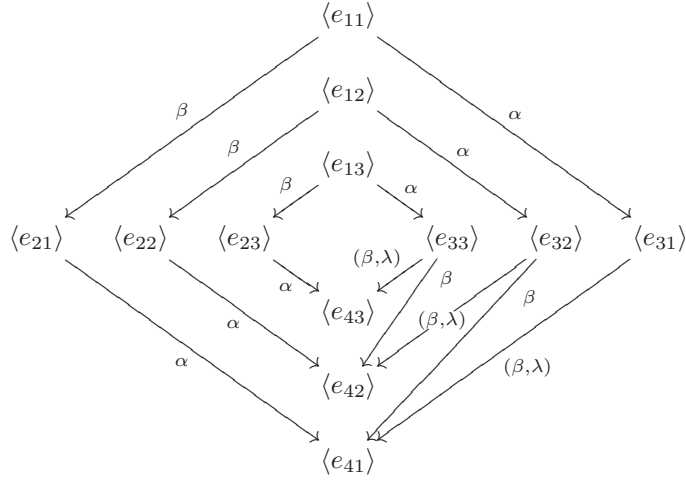
\begin{figure}[ht]
$$ \vcenter{\begin{xy}\SelectTips{cm}{}\xymatrix@-1pc{
&&&\langle e_{11}\rangle\ar[dddlll]_\beta\ar[dddrrr]^\alpha\\
&&&\langle e_{12}\rangle\ar[ddll]_\beta\ar[ddrr]^\alpha\\
&&&\langle e_{13}\rangle\ar[dl]_\beta\ar[dr]^\alpha\\
\langle e_{21}\rangle\ar[dddrrr]_\alpha&\langle e_{22}\rangle\ar[ddrr]_\alpha&\langle e_{23}\rangle\ar[dr]_\alpha&&\langle e_{33}\rangle\ar[dl]_{(\beta,\lambda)}\ar[ddl]^(.2){\beta}&\langle e_{32}\rangle\ar[ddll]|(.55){(\beta,\lambda)}\ar[dddll]^(.2){\beta}&\langle e_{31}\rangle\ar[dddlll]^{(\beta,\lambda)}\\
&&&\langle e_{43}\rangle\\
&&&\langle e_{42}\rangle\\
&&&\langle e_{41}\rangle}\end{xy}}$$
\caption{A band module $B_\ast(\lambda,3)$}
\label{fig-band-mod}
\end{figure}
\end{ex}

\begin{rem}\label{rem-ART-Atilde}
 Let $Q$ be a quiver of type $\tilde A_{l-1}$. The category $\Rep(Q)$ is well known and described in \cite{SS}. The indecomposable representations can de divided into three classes of representations: The class of preprojective representations, the class of regular representations and the class of preinjective representations. Let $M=B_\ast(\lambda,m)$ be a band module and $N=F_\ast(V)$ a string module of $Q$. Then  the following holds: \begin{itemize}
\item The band module $M$ is regular.
\item $\Hom_Q(N,M)\neq0$ and $\Hom_Q(M,N)=0$ if and only if $N$ is preprojective.
\item $\Hom_Q(N,M)=0$ and $\Hom_Q(M,N)=0$ if and only if $N$ is regular.
\item $\Hom_Q(N,M)=0$ and $\Hom_Q(M,N)\neq0$ if and only if $N$ is preinjective.
\item If $N$ is preprojective and $\dim(N)\leq\dim(M)$, then an injective map $N\hookrightarrow M$ and an indecomposable preinjective representation with dimension vector $\udim(M)-\udim(N)$ exists.
\item If $N$ is non-regular, then $N$ is determined up to isomorphism by its dimension vector.
\item If $N$ is preprojective, then all short exact sequences $0\to M\to L\to N\to0$ with some $Q$-representation $L$ split.
\item If $N$ is preinjective, then all short exact sequences $0\to N\to L\to M\to0$ with some $Q$-representation $L$ split.
\end{itemize} Let $M$ and $N$ be indecomposable preprojective $Q$-representations with $\dim(M)\geq \dim(N)$. Then $\Hom_Q(M,N)=0$ if $M\ncong N$ and all short exact sequences $0\to M\to L\to N\to0$ with some $Q$-representation $L$ split.

\end{rem}

\subsection{Ringel-Hall algebras}\label{sec-dfn-RH}

The Ringel-Hall algebras of finite-dimensional hereditary algebras over finite fields are well known objects (see \cite{Schiffmann} for an introduction). We now consider the Ringel-Hall algebra $\mathcal H(A)$ of constructible functions over a finite-dimensional $\CC$-algebra $A$. This is an idea due to Schofield \cite{Schofield}, which also appears in works of Lusztig \cite{Lusztig} and Riedtmann \cite{Riedtmann}. A nice introduction to the construction of Kapranov and Vasserot \cite{KapranovVasserot} and Joyce \cite{Joyce}, which we are using here, can be found in \cite{BridgelandLaredo}. For completeness we review the definition.

Let $A=\CC Q/I$ be a path algebra of a quiver $Q$ modulo an admissible ideal $I$. For a dimension vector $\ud$ let
 \begin{align*}{\Rep}_{\ud}(A):=\left\{(M_\alpha)_\alpha\in{\prod}_{\alpha\in Q_1}{\Hom}_\CC \left(\CC^{d_{s(\alpha)}},\CC^{d_{t(\alpha)}}\right)\middle|\left(\CC^{d_i},M_\alpha\right)_{i,\alpha}\in{\Mod}(A)\right\}\end{align*} be the \textit{module variety} of the $A$-modules with dimension vector $\ud$. The algebraic group ${\GL}_{\ud}(\CC)=\prod_{i\in Q_0}{\GL}_{d_i}(\CC)$ acts by conjugation on the variety ${\Rep}_{\ud}(A)$ such that the ${\GL}_{\ud}(\CC)$-orbits are in bijection to the isomorphism classes of $A$-modules with dimension vector $\ud$.

A function $f\colon X\to \CC$ on a variety $X$ is called \textit{constructible} if the image is finite and every fiber is locally closed. A constructible function $f\colon \Rep_{\ud}(A)\to \CC$ is called \textit{$\GL_{\ud}(\CC)$-stable} (or \textit{$\GL(\CC)$-stable} for short) if the fibers are $\GL_{\ud}(\CC)$-stable sets.

Let $\mathcal H_{\ud}(A)$ be the vector space of constructible and $\GL_{\ud}(\CC)$-stable functions on $\Rep_{\ud}(A)$. Let $\mathcal H(A)=\bigoplus_{\ud\in\NN^{|Q_0|}}\mathcal H_{\ud}(A)$ and $\ast\colon \mathcal H(A)\otimes\mathcal H(A)\to\mathcal H(A)$ with \begin{align*}(\Eins_X\ast \Eins_Y)(M)=\chi\Big(\Big\{0\subseteq N\subseteq M\Big|N\in X,M/N\in Y\Big\}\Big)\end{align*} for all $M\in\Rep_{\uc+\ud}(A)$ and all locally closed and $\GL(\CC)$-stable subsets $X\subseteq\Rep_{\ud}(A)$ and $Y\subseteq\Rep_{\uc}(A)$. For a dimension vector $\ud$ let $\Eins_{\ud}$ be the characteristic function of all representations with dimension vector $\ud$. For a $A$-module $M$ let $\Eins_{M}$ be the characteristic function of the orbit of the module $M$.
\begin{prop}  The vector space $\mathcal H(A)$ with the product $\ast$ is an associative, $\NN^{|Q_0|}$-graded algebra with unit $\Eins_0$.
\end{prop}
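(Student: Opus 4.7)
My plan is to verify in order: (i) well-definedness and $\NN^{|Q_0|}$-grading of the product, (ii) the unit axiom, and (iii) associativity. Items (i) and (ii) are essentially formal; all the content is in (iii).

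For (i), by bilinearity it suffices to consider products of characteristic functions $\Eins_X$, $\Eins_Y$ of locally closed $\GL(\CC)$-stable subsets $X\subseteq\Rep_{\ud}(A)$, $Y\subseteq\Rep_{\uc}(A)$. For each $M\in\Rep_{\uc+\ud}(A)$ the set $\{N\in\Gr_{\ud}(M)\mid N\in X,\ M/N\in Y\}$ is constructible, so its Euler characteristic is well-defined. The assignment $M\mapsto\chi(\cdot)$ is $\GL$-stable by isomorphism-invariance of quiver Grassmannians, and constructible because it is the pushforward of the characteristic function of the constructible incidence variety $\{(M,N)\}$ along its projection to $\Rep_{\uc+\ud}(A)$. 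Hence $\Eins_X\ast\Eins_Y\in\mathcal H_{\uc+\ud}(A)$, which also gives the grading. For (ii), $\Rep_0(A)$ is a single point, so for any $f\in\mathcal H_{\ud}(A)$ and $M\in\Rep_{\ud}(A)$ only the zero subrepresentation contributes to $(\Eins_0\ast f)(M)$ and only $N=M$ to $(f\ast\Eins_0)(M)$, giving $f(M)$ in both cases.

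For (iii), fix $\Eins_{X_i}$ with $X_i\subseteq\Rep_{\ud_i}(A)$ locally closed and $\GL$-stable ($i=1,2,3$) and $M\in\Rep_{\ud_1+\ud_2+\ud_3}(A)$. Introduce the two-step flag incidence variety
\begin{align*}
\mathcal T(M)=\bigl\{(N_1,N_2)\,:\,& 0\subseteq N_1\subseteq N_2\subseteq M,\ \udim(N_1)=\ud_1,\ \udim(N_2)=\ud_1+\ud_2,\\
& N_1\in X_1,\ N_2/N_1\in X_2,\ M/N_2\in X_3\bigr\},
\end{align*}
which is constructible. The plan is to show both triple products applied to $M$ equal $\chi(\mathcal T(M))$. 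Write $g=\Eins_{X_1}\ast\Eins_{X_2}=\sum_v v\,\Eins_{Y_v}$ with $Y_v=g^{-1}(v)$ a finite locally closed $\GL$-stable stratification of $\Rep_{\ud_1+\ud_2}(A)$ (produced by (i)). Then
\begin{align*}
((\Eins_{X_1}\ast\Eins_{X_2})\ast\Eins_{X_3})(M)=\sum_v v\cdot\chi\{N_2\subseteq M\mid N_2\in Y_v,\ M/N_2\in X_3\},
\end{align*}
and recognising the right-hand side via the projection $\mathcal T(M)\to\{N_2\subseteq M\mid M/N_2\in X_3\}$, whose fiber over $N_2$ is the quiver Grassmannian $\{N_1\subseteq N_2\mid N_1\in X_1,\ N_2/N_1\in X_2\}$ of Euler characteristic $g(N_2)$, yields $\chi(\mathcal T(M))$ by Fubini-type additivity of $\chi$. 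The symmetric argument, using the projection $\mathcal T(M)\to\{N_1\subseteq M\mid N_1\in X_1\}$ with fiber over $N_1$ isomorphic via $P=N_2/N_1$ to $\{P\subseteq M/N_1\mid P\in X_2,\ (M/N_1)/P\in X_3\}$, gives the same value for $(\Eins_{X_1}\ast(\Eins_{X_2}\ast\Eins_{X_3}))(M)$.

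The principal obstacle is the Fubini step: for a morphism $\pi\colon T\to B$ of constructible sets whose function $b\mapsto\chi(\pi^{-1}(b))$ is constructible, one must have $\chi(T)=\sum_v v\cdot\chi\{b\in B\mid\chi(\pi^{-1}(b))=v\}$. This is the standard additivity of Euler characteristic under constructible stratification used throughout the Joyce--Kapranov--Vasserot framework. Applying it rigorously requires exactly the constructibility of the convolution established in step (i), so once (i) is in place the associativity reduces to recognising that the two bracketings compute the same integral of $\Eins_{X_1}\otimes\Eins_{X_2}\otimes\Eins_{X_3}$ over the common refinement $\mathcal T(M)$.
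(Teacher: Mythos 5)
The paper does not prove this proposition; it simply states it with citations to Kapranov--Vasserot, Joyce, and Bridgeland--Toledano Laredo. Your proposal supplies the standard proof from those references and it is correct: after reducing to characteristic functions, you establish constructibility and the grading via the pushforward of the indicator of the incidence variety, the unit axiom is immediate, and associativity follows by equating both bracketings to $\chi(\mathcal T(M))$ over the two-step flag incidence variety using Fubini for the topological Euler characteristic of complex constructible sets. The one point worth stating more explicitly is that Fubini in the form you need --- $\chi(T)=\sum_v v\cdot\chi\{b\in B\mid\chi(\pi^{-1}(b))=v\}$ --- holds because the fiber-Euler-characteristic function of an algebraic morphism of complex varieties is automatically constructible (a stratification of the morphism into topologically locally trivial pieces), not just because you already proved constructibility of $\Eins_{X_1}\ast\Eins_{X_2}$; you invoke the latter, which is a consequence rather than the cause, so it would be cleaner to cite the general pushforward/Fubini machinery directly. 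Also, when identifying the fiber of $(N_1,N_2)\mapsto N_1$ with subrepresentations of $M/N_1$, note explicitly that the correspondence $N_2\leftrightarrow N_2/N_1$ is the lattice isomorphism and that $(M/N_1)/(N_2/N_1)\cong M/N_2$, so membership in $X_2$ and $X_3$ is preserved. These are minor expository points; the argument is sound.
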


 Let $C(A)$ be the subalgebra of $\mathcal H(A)$ generated by the set $\left\{\Eins_{\ud}\middle|\ud\in\NN^{|Q_0|}\right\}$. The algebra $C(A)$ is a cocommutative Hopf algebra with the coproduct $\Delta\colon C(A)\to C(A)\otimes C(A)$ defined by $\Delta(f)(M,N)=f(M\oplus N)$ for all $f\in C(A)$. This is known by Joyce \cite{Joyce} and also stated in \cite{BridgelandLaredo}.

\subsection{String algebras}\label{sec-dfn-string-alg}
 Let $Q$ be a quiver and $I$ an admissible ideal. Then $A=\CC Q/I$ is called a \textit{string algebra} if the following hold:\begin{enumerate}
    \item At most two arrows start in each vertex of $Q$.
    \item At most two arrows end in each vertex of $Q$.

    \item If $i,i',j,k\in Q_0$ and $(\alpha\colon i\to j),(\beta\colon i'\to j),(\gamma\colon j\to k)\in Q_1$ with $\alpha\neq\beta$, then $\alpha\gamma\in I$ or $\beta\gamma\in I$.

    \item If $i,j,k,k'\in Q_0$ and $(\alpha\colon i\to j),(\beta\colon j\to k),(\gamma\colon j\to k')\in Q_1$ with $\beta\neq\gamma$, then $\alpha\beta\in I$ or $\alpha\gamma\in I$.
     \item The ideal $I$ is generated by oriented paths of $Q$.
\end{enumerate}
\begin{ex} Let $Q$ be as in Example~\ref{ex-band}. Then $A=\CC Q/(\alpha^2,\beta^2,\alpha\beta\alpha)$ is a string algebra and the set $\{e_\circ,\alpha,\beta, \alpha\beta,\beta\alpha,\beta\alpha\beta\}$ of pathes is a basis of the vector space $A$.

\end{ex}

 Let $A$ be a string algebra. Then it is well known that every indecomposable $A$-module is a string or a band module.

\section{Main results}\label{sec-main-res}
 In this section we explain our results in more detail.
\subsection{Tree and band modules}\label{sec-res-bb-rep} All the corollaries and examples of this section are strictly related to Theorem~\ref {thm-tree-band}.

\begin{cor}\label{cor-tree} If all vector spaces $V_i$ in Theorem~\ref{thm-tree-band}\eqref{part-tree-band-1} are one-dimensional and all the maps $V_a$ are non zero, we have to count successor closed subquivers of $S$ with dimension vectors in $\uF^{-1}(\ud)$ to compute $\chi_{\ud}(F_\ast(V))$.
\end{cor}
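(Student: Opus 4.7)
The plan is to combine Theorem~\ref{thm-tree-band}\eqref{part-tree-band-1} with a direct identification of the quiver Grassmannian $\Gr_{\ut}(V)$ as a finite discrete set whenever $V$ is as in the hypothesis. First I would invoke part~\eqref{part-tree-band-1} to rewrite
\begin{align*}
\chi_{\ud}(F_\ast(V))=\sum_{\ut\in\uF^{-1}(\ud)}\chi_{\ut}(V),
\end{align*}
so it suffices to show that, under the stated assumptions, $\chi_{\ut}(V)$ equals the number of successor closed subquivers of $S$ of dimension vector $\ut$.

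Next I would analyse $\Gr_{\ut}(V)$ directly. Since $\dim_{\CC}V_i=1$ for every $i\in S_0$, any subspace $U_i\subseteq V_i$ is forced to be either $0$ or $V_i$. Hence a point $U=(U_i)_{i\in S_0}\in\Gr_{\ut}(V)$ is uniquely determined by the vertex set $T(U)=\{i\in S_0\mid U_i=V_i\}$, and the condition $\udim U=\ut$ pins down $T(U)$ to have characteristic vector $\ut$. The subrepresentation condition $V_a(U_{s(a)})\subseteq U_{t(a)}$ is automatic when $U_{s(a)}=0$, and when $U_{s(a)}=V_{s(a)}$, the hypothesis $V_a\neq 0$ together with $\dim V_{t(a)}=1$ forces $U_{t(a)}=V_{t(a)}$. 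Thus the subrepresentations of dimension vector $\ut$ correspond bijectively to successor closed subquivers of $S$ with that dimension vector.

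I would then conclude by noting that these subrepresentations give pairwise distinct reduced points of the ordinary Grassmannian $\bigoplus_i\Gr_{t_i}(V_i)$, so $\Gr_{\ut}(V)$ is a finite set of reduced points and hence $\chi_{\ut}(V)$ is exactly the number of successor closed subquivers of $S$ of dimension vector $\ut$. Summing over $\ut\in\uF^{-1}(\ud)$ produces the claimed count.

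The main obstacle is minor: it amounts to checking that the scheme structure on $\Gr_{\ut}(V)$ is reduced at each of these isolated points so that each really contributes $1$ to the Euler characteristic. This is clear because each subrepresentation is cut out as a single linear subspace (the intersection of coordinate planes in a direct sum of one-dimensional pieces), so the points are reduced and isolated; no further work is needed.
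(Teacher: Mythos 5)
Your proof is correct and is exactly the argument the paper intends; the paper only remarks that the corollary ``follows immediately from Theorem~\ref{thm-tree-band},'' and you have filled in the straightforward details (one-dimensional $V_i$ force each $U_i$ to be $0$ or $V_i$, the nonvanishing of $V_a$ turns the subrepresentation condition into the successor-closed condition, and the resulting $\Gr_{\ut}(V)$ is a finite set whose cardinality is its Euler characteristic). The closing worry about reducedness is unnecessary: $\chi$ here is the topological Euler characteristic of the complex points, which for a finite set is just its cardinality regardless of any scheme structure.
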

This corollary follows immediately from Theorem~\ref {thm-tree-band}.

\begin{cor}\label{cor-pos}
  Let $Q$ be a quiver, $M$ a tree or band module and $\ud$ a dimension vector of $Q$ such that the variety $\Gr_{\ud}(M)$ is non-empty. Then $\chi_{\ud}(M)>0$.
\end{cor}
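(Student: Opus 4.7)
The plan is to combine Theorem \ref{thm-tree-band} with the non-emptiness clause of Theorem \ref{thm-Grad}. Write $M = F_\ast(V)$ with $F\colon S\to Q$ a tree or a band. Theorem \ref{thm-tree-band}\eqref{part-tree-band-1} gives
$$\chi_{\ud}(M) = \sum_{\ut \in \uF^{-1}(\ud)} \chi_{\ut}(V),$$
so it suffices to show that every summand is non-negative and that at least one is strictly positive whenever $\Gr_{\ud}(M)$ is non-empty.

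Non-negativity of each summand is immediate from the formulas already established. In the tree case, Corollary \ref{cor-tree} identifies $\chi_{\ut}(V)$ with the number of successor-closed subquivers of $S$ with dimension vector $\ut$, a non-negative integer. In the band case, Theorem \ref{thm-tree-band}\eqref{part-tree-band-2} expresses $\chi_{\ut}(V)$ as a product of factorial-type terms each $\geq 0$ under the convention $1/r!=0$ for $r<0$. Hence $\chi_{\ud}(M)\geq 0$ unconditionally.

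For strict positivity under the hypothesis $\Gr_{\ud}(M)\neq\emptyset$, I would apply the stable gradings $\partial$ used in the proof of Theorem \ref{thm-tree-band}\eqref{part-tree-band-1}. Since $\Gr_{\ud}(M)$ is non-empty and closed, Theorem \ref{thm-Grad} ensures that $\Gr_{\ud}(M)^\partial$ remains non-empty and closed; iterating over the full collection of gradings produces a non-empty ``maximally homogeneous'' locus, which by the construction underlying formula \eqref{gl-tree-band} decomposes as $\bigsqcup_{\ut\in\uF^{-1}(\ud)}\Gr_{\ut}(V)$ (modulo its own further fixed-point reduction in the band case). Hence some $\Gr_{\ut}(V)$ is non-empty. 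In the tree case this amounts to the existence of a successor-closed subquiver of $S$ of dimension $\ut$, so $\chi_{\ut}(V)\geq 1$. In the band case, non-emptiness of $\Gr_{\ut}(V)$ translates into the inequalities $t_i\leq n$ and $\varepsilon_i(t_i-t_{i+1})\geq 0$ for all $i$, which make every factor in \eqref{gl-prop-band} strictly positive.

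The main obstacle is verifying that the bijection between the iterated $\partial$-homogeneous locus of $\Gr_{\ud}(M)$ and the disjoint union $\bigsqcup_{\ut\in\uF^{-1}(\ud)}\Gr_{\ut}(V)$ genuinely transports non-emptiness: at every step of the iteration the grading must be stable on the whole Grassmannian (not merely on a stratum) so that the ``non-empty closed stays non-empty closed'' part of Theorem \ref{thm-Grad} applies. This is exactly the structural content of the proof of Theorem \ref{thm-tree-band}\eqref{part-tree-band-1}; once that bookkeeping is in place, non-emptiness cascades down to a single positive summand and forces $\chi_{\ud}(M)>0$.
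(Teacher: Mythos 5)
Your proof is correct and follows essentially the same approach as the paper, which merely observes that the non-negativity follows from the explicit formulas in Theorem~\ref{thm-tree-band}, and that the strict positivity follows because Theorem~\ref{thm-Grad} preserves non-emptiness at every application of a stable grading. You have fleshed out the bookkeeping that the paper leaves implicit, and your observation that non-emptiness of $\Gr_{\ut}(V)$ forces the inequalities $t_i\leq n$ and $\varepsilon_i(t_i-t_{i+1})\geq 0$, hence a strictly positive value of~\eqref{gl-prop-band}, is exactly the right verification.
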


\begin{proof} The inequality $\chi_{\ud}(M)\geq 0$ is clear by Theorem~\ref{thm-tree-band}. We prove the statement of Theorem~\ref{thm-tree-band} by applying Theorem~\ref{thm-Grad} several times. So also the stronger inequality $\chi_{\ud}(M)>0$ follows.\end{proof}

 If the quiver $S$ is an oriented cycle, each indecomposable band module $B_\ast(V)$ has a unique filtration with $n=\dim_{\CC}(V_i)$ pairwise isomorphic simple factors of dimension $|S_0|$. In this case Theorem~\ref{thm-tree-band}\eqref{part-tree-band-2} holds (see Example~\ref{ex-orient-cyc}). Therefore we can assume without loss of generality that $r\geq1$ and $1\leq i_1<i'_1<i_2<i'_2\ldots<i_r<i'_r\leq l$. The quiver $S$ is visualized in Figure~\ref{fig-tAl}.
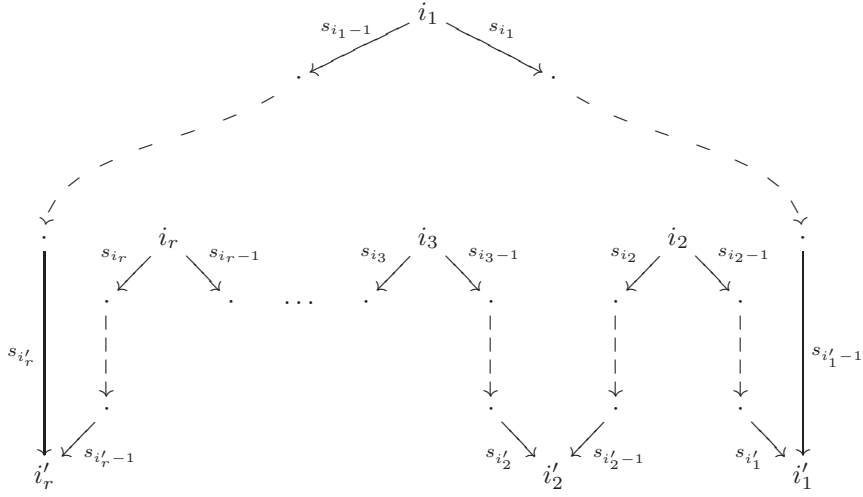
\begin{figure}[ht]
$$\vcenter{\begin{xy}\SelectTips{cm}{}\xymatrix@-1pc{
&&&&&&i_1\ar[drr]^{s_{i_1}}\ar[dll]_{s_{i_1-1}}\\
&&&&\cdot\ar@(dl,u)@{-->}[dddllll]&&&&\cdot\ar@(dr,u)@{-->}[dddrrrr]\\
\\\\
\cdot\ar[dddd]_{s_{i'_r}}&&i_r\ar[dl]_{s_{i_r}}\ar[dr]^{s_{i_r-1}}&&&&i_3\ar[dl]_{s_{i_3}}\ar[dr]^{s_{i_3-1}}&&&&i_2\ar[dl]_{s_{i_2}}\ar[dr]^{s_{i_2-1}}&&\cdot\ar[dddd]^{s_{i'_1-1}}\\
&\cdot\ar@{-->}[dd]&&\cdot&\ldots&\cdot&&\cdot\ar@{-->}[dd]&&\cdot\ar@{-->}[dd]&&\cdot\ar@{-->}[dd]\\
\\
&\cdot\ar[dl]^{s_{i'_r-1}}&&&&&&\cdot\ar[dr]_{s_{i'_2}}&&\cdot\ar[dl]^{s_{i'_2-1}}&&\cdot\ar[dr]_{s_{i'_1}}\\
i'_r&&&&&&&&i'_2&&&&i'_1} \end{xy}}$$
\caption{A quiver of type $\tilde A_{l-1}$}
\label{fig-tAl}
\end{figure}

\begin{ex}\label{ex-orient-cyc} Let $S$, $V$ and $\ut$ be as in Theorem~\ref{thm-tree-band}\eqref{part-tree-band-2}. Let $t_1=t_2=\ldots=t_l\leq n$. Then $\chi_{\ut}\left(V\right)=1$.
\end{ex}

The next example shows one result of \cite[Proposition 3]{Irelli} as a special case of Theorem~\ref{thm-tree-band}\eqref{part-tree-band-2}.

\begin{ex}Let $S$, $V$ and $\ut$ be as in Theorem~\ref{thm-tree-band}\eqref{part-tree-band-2}. Let $r=1$, $i_1=1$ and $i'_1=l$. Then \begin{align*} \chi_{\ut}\left(V\right)=\begin{pmatrix}t_l\\t_1\end{pmatrix}\begin{pmatrix}n-t_1\\n-t_l\end{pmatrix} \frac{(t_l-t_1)!}{\prod_{i=1}^{l-1}(t_{i+1}-t_i)!}.\end{align*}
\end{ex}

 \begin{ex} Let $Q$, $B$ be as in Example~\ref{ex-band} and $M=B_\ast(\lambda,2)$ a band module with $\lambda\in\CC^\ast$. Using Theorem~\ref{thm-tree-band}, it is easy to calculate the Euler characteristics $\chi_d(M)$. For instance, \begin{align*}
 \chi_4(M)&=\chi_{(0,0,2,2)}(V)+\chi_{(0,2,0,2)}(V)+\chi_{(0,1,1,2)}(V)+\chi_{(1,1,1,1)}(V)=7.\end{align*}
\end{ex}

\begin{ex}
If $F$ is a tree or a band, Theorem~\ref{thm-tree-band}\eqref{part-tree-band-1} holds for any $S$-re\-pre\-sen\-ta\-tion $V$. Let $F$ be the winding described by the following picture.
\begin{align*}F\colon S=\left(\vcenter{\begin{xy}\SelectTips{cm}{}\xymatrix@-0.3pc{
  &2\ar[d]^\beta\\1\ar[r]^\alpha&4\ar[r]^{\alpha'}&3}\end{xy}}\right)\to Q=\Big(\vcenter{\begin{xy}\SelectTips{cm}{}\xymatrix@-1pc{\circ\ar@(ld,lu)[]^\alpha\ar@(rd,ru)[]_\beta}\end{xy}}\Big) \end{align*} Let $V$ be an indecomposable $S$-representation with dimension vector $(1,1,1,2)$. Then \begin{align*} \chi_{3}\left(F_\ast(V)\right)= \chi_{(1,0,1,1)}(V)+\chi_{(0,1,1,1)}(V)+\chi_{(0,0,1,2)}(V)=3.\end{align*}
\end{ex}

\begin{ex}
 If $S$ is not a tree and not a band, Equation~\eqref{gl-tree-band} does not hold in general. To see this we consider the winding $F$ described by the following picture. \begin{align*}F\colon S=\left(\vcenter{\begin{xy}\SelectTips{cm}{}\xymatrix@-0.9pc{
  &&&3\\
  1\ar[r]^\alpha&2\ar[rru]^\beta\ar[rrd]_{\gamma'}&2'\ar[rd]^{\beta'}\ar[ru]_\gamma\\
  &&&3'
  }\end{xy}}\right)\to Q=\left(\vcenter{\begin{xy}\SelectTips{cm}{}\xymatrix@+0pc{
  1\ar[r]^\alpha&2\ar@<0.5ex>[r]^\beta\ar@<-0.5ex>[r]_\gamma&3}\end{xy}}\right) \end{align*}
 Let $V$ be a $S$-representation with $\dim_{\CC}(V_i)=1$ for all $i\in S_0$ and $V_a\neq0$ for all $a\in S_1$. Then $F_\ast(V)$ is indecomposable and \begin{align*}\chi_{(0,1,1)}\left(F_\ast(V)\right)=2\neq0={\sum}_{\ut\in\uF^{-1}((0,1,1))}\chi_{\ut}(V).\end{align*} It is easy to see that there exists no quiver $S$, no winding $F$ and no $S$-representation $V$ with $\dim_{\CC}(V_i)=1$ for all vertices $i$ such that a formula similar to Equation~\eqref{gl-tree-band} holds. So it is not possible to describe these Euler characteristics purely combinatorial using our techniques.
\end{ex}

\subsection{Quiver flag varieties}\label{sec-flag-var}

\begin{dfn}\label{dfn-qui-fla-var} Let $Q$ be a quiver, $M$ a $Q$-representation and $\ud^{(1)},\ldots,\ud^{(r)}$ dimension vectors.
 Then the closed subvariety \begin{align*}{\F}_{\ud^{(1)},\ldots,\ud^{(r)}}(M):=\left\{0\subseteq U^{(1)}\subseteq\ldots\subseteq U^{(r)}\subseteq M\middle|U^{(i)}\in{\Gr}_{\ud^{(i)}}(M)\ \forall i\right\}\end{align*} of the classical partial flag variety is called the \textit{quiver flag variety}.
\end{dfn}

 We denote the Euler characteristic of ${\F}_{\ud^{(1)},\ldots,\ud^{(r)}}(M)$ by $\chi_{\ud^{(1)},\ldots,\ud^{(r)}}(M)$. The following corollaries of Theorem~\ref{thm-tree-band} follow immediately from the analogous statements for the quiver Grassmannians.

\begin{cor}[Riedtmann]\label{cor-dir-sum} Let $Q$ be a quiver, $\ud^{(1)},\ldots,\ud^{(r)}$ dimension vectors and $M$ and $N$ $Q$-representations. Then 
\begin{align*}
 \chi_{\ud^{(1)},\ldots,\ud^{(r)}}(M\oplus N)={\sum}_{0\leq \uc^{(i)}\leq\ud^{(i)}}\chi_{\uc^{(1)},\ldots,\uc^{(r)}}(M)\chi_{\ud^{(1)}-\uc^{(1)},\ldots,\ud^{(r)}-\uc^{(r)}}(N).
\end{align*}
\end{cor}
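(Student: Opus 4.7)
The plan is to adapt the $\CC^\ast$-action argument used in the proof of Proposition~\ref{prop-dir-sum} to the flag setting. Fix vector space bases of $M$ and $N$ and define a grading $\partial$ on $M\oplus N$ by assigning $\partial(e)=0$ to each chosen basis vector of $M$ and $\partial(e)=1$ to each chosen basis vector of $N$. The induced $\CC^\ast$-action rescales the $N$-summand by $\lambda$ while fixing $M$ pointwise; since no arrow of $Q$ mixes the two summands of $M\oplus N$, the action is by automorphisms of $M\oplus N$ as a $Q$-representation and therefore preserves the subrepresentation relation, so $\partial$ is stable on the whole flag variety $\F_{\ud^{(1)},\ldots,\ud^{(r)}}(M\oplus N)$.

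The flag-variety analogue of Theorem~\ref{thm-Grad} — equivalently, the standard fact that $\chi(X)=\chi(X^{\CC^\ast})$ for a $\CC^\ast$-action on a projective variety $X$ — then reduces the computation to the fixed-point locus, which consists of flags $0\subseteq U^{(1)}\subseteq\cdots\subseteq U^{(r)}\subseteq M\oplus N$ whose terms each admit a $\partial$-homogeneous basis. A homogeneous subspace of $M\oplus N$ is precisely one of the form $U_M\oplus U_N$ with $U_M\subseteq M$ and $U_N\subseteq N$, and the subrepresentation condition $M_a(U_M\oplus U_N)_{s(a)}\subseteq (U_M\oplus U_N)_{t(a)}$ splits, using that $M$ and $N$ are themselves subrepresentations, into $U_M\subseteq M$ and $U_N\subseteq N$ being subrepresentations separately.

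Writing $\uc^{(i)}:=\udim\bigl(U^{(i)}_M\bigr)$, the chain $U^{(1)}\subseteq\cdots\subseteq U^{(r)}$ on the fixed-point set decouples into chains of subrepresentations of $M$ and of $N$ with dimension vectors $\uc^{(1)},\ldots,\uc^{(r)}$ and $\ud^{(1)}-\uc^{(1)},\ldots,\ud^{(r)}-\uc^{(r)}$ respectively. Hence the fixed-point locus is isomorphic, as a variety, to the disjoint union
\begin{align*}
\bigsqcup_{0\leq\uc^{(i)}\leq\ud^{(i)}}\F_{\uc^{(1)},\ldots,\uc^{(r)}}(M)\times\F_{\ud^{(1)}-\uc^{(1)},\ldots,\ud^{(r)}-\uc^{(r)}}(N),
\end{align*}
where pieces violating $\uc^{(1)}\leq\cdots\leq\uc^{(r)}$ or $\ud^{(1)}-\uc^{(1)}\leq\cdots\leq\ud^{(r)}-\uc^{(r)}$ are empty and contribute $0$. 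Multiplicativity of the Euler characteristic under products and additivity under disjoint unions of locally closed pieces yields the stated formula.

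The main obstacle is the clean verification that Theorem~\ref{thm-Grad} (stated only for Grassmannians) carries over to flag varieties — essentially requiring that the $\CC^\ast$-fixed-point argument be run on $\F_{\ud^{(1)},\ldots,\ud^{(r)}}(M\oplus N)$ instead of on $\Gr_{\ud}(M\oplus N)$; once this is granted, the decomposition of the fixed-point set is a direct linear-algebra observation and the rest is formal. Alternatively one may induct on $r$, peeling off $U^{(r)}$ and stratifying by its isomorphism type to reduce to Proposition~\ref{prop-dir-sum}, but the $\CC^\ast$-action route stays closer to the paper's techniques and needs no separate stratification.
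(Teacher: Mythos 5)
Your proof is correct and takes essentially the same route as the paper: it applies the Riedtmann grading $\CC^\ast$-action to $M\oplus N$, passes to the fixed-point locus via the Bialynicki-Birula argument, and observes that homogeneous flags decouple into a flag in $M$ and a flag in $N$. The paper states the corollary "follows immediately from the analogous statements for the quiver Grassmannians" and leaves the flag-variety adaptation implicit, so your extra care in noting that Theorem~\ref{thm-Grad}'s fixed-point argument runs verbatim on $\F_{\ud^{(1)},\ldots,\ud^{(r)}}(M\oplus N)$ (a closed subvariety of a product of projective varieties carrying the same $\CC^\ast$-action) is a welcome explication rather than a departure.
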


\begin{cor}\label{cor-tree-band} Let $Q$ and $S$ be quivers, ${\ud^{(1)},\ldots,\ud^{(r)}}$ dimension vectors of $Q$ and $V$ a $S$-representation. Let $F\colon S\to Q$ be a tree or a band. Then \begin{align*}\chi_{\ud^{(1)},\ldots,\ud^{(r)}}(F_\ast(V))={\sum}_{\ut^{(i)}\in\uF^{-1}(\ud^{(i)})}\chi_{\ut^{(1)},\ldots,\ut^{(r)}}(V).\end{align*}
In particular if all vector spaces $V_i$ are one-dimensional and all the maps $V_a$ are non zero, we have to count flags of successor closed subquivers of $S$ with dimension vectors in $\uF^{-1}(\ud^{(i)})$ to compute $\chi_{\ud^{(1)},\ldots,\ud^{(r)}}(F_\ast(V))$.
\end{cor}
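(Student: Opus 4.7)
The plan is to imitate the proof of Theorem~\ref{thm-tree-band}\eqref{part-tree-band-1} on the flag variety, since in both the tree and band case that proof proceeds by iteratively applying Theorem~\ref{thm-Grad} with a sequence of stable gradings whose simultaneous fixed locus on $\Gr_{\ud}(F_\ast(V))$ is exactly the disjoint union $\bigsqcup_{\ut\in\uF^{-1}(\ud)}\Gr_{\ut}(V)$, embedded in the obvious way via the block decomposition $F_\ast(V)_i=\bigoplus_{j\in F_0^{-1}(i)}V_j$.

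First I would state and prove a flag-variety analogue of Theorem~\ref{thm-Grad}: if $\partial$ is a grading of $M$ which is stable on each of $\Gr_{\ud^{(1)}}(M),\dots,\Gr_{\ud^{(r)}}(M)$, then the induced $\CC^\ast$-action on $\F_{\ud^{(1)},\ldots,\ud^{(r)}}(M)$ is well-defined, the fixed locus consists precisely of those flags $0\subseteq U^{(1)}\subseteq\ldots\subseteq U^{(r)}\subseteq M$ in which every $U^{(i)}$ admits a $\partial$-homogeneous basis, and its Euler characteristic equals $\chi_{\ud^{(1)},\ldots,\ud^{(r)}}(M)$. This reduces to the standard fact that the Euler characteristic of a variety with a $\CC^\ast$-action equals that of its fixed points, applied to $\F_{\ud^{(1)},\ldots,\ud^{(r)}}(M)$ viewed as a closed subvariety of the product of quiver Grassmannians; the key point is that a flag of $\partial$-homogeneous subrepresentations is itself $\partial$-homogeneous, so homogeneity of the pieces is equivalent to fixedness of the flag.

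Next I would feed the same sequence of gradings used for Theorem~\ref{thm-tree-band}\eqref{part-tree-band-1} into this flag analogue. By the already-established computation of their simultaneous fixed locus on the single Grassmannians, a $\partial$-homogeneous subrepresentation $U\subseteq F_\ast(V)$ of dimension vector $\ud$ is of the form $U_i=\bigoplus_{j\in F_0^{-1}(i)}W_j$ for a unique subrepresentation $W\subseteq V$ with $\uF(\udim W)=\ud$. This correspondence is strictly compatible with inclusions, hence a $\partial$-homogeneous flag in $F_\ast(V)$ with dimension vectors $\ud^{(1)},\ldots,\ud^{(r)}$ is exactly a flag in $V$ with dimension vectors $\ut^{(1)},\ldots,\ut^{(r)}$ satisfying $\uF(\ut^{(i)})=\ud^{(i)}$. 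Summing Euler characteristics over the resulting disjoint union yields the claimed formula. The ``in particular'' clause then follows from Corollary~\ref{cor-tree} applied componentwise, since each $\ut^{(i)}\in\uF^{-1}(\ud^{(i)})$ contributing a nonzero term corresponds to a successor closed subquiver of $S$ of the prescribed dimension vector.

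The main obstacle I anticipate is the bookkeeping required to ensure that the gradings used in the tree and (especially) the band case of Theorem~\ref{thm-tree-band} are genuinely stable on each $\Gr_{\ud^{(i)}}(F_\ast(V))$ simultaneously, and that their joint fixed locus on the flag variety really is a disjoint union indexed cleanly by tuples $(\ut^{(1)},\ldots,\ut^{(r)})$ rather than by some more subtle chain condition on homogeneous components. For the band case one must in addition check that the fixed locus respects the parametrization of band submodules recalled in Remark~\ref{rem-eind-band}, so that the identification of $\partial$-homogeneous flags in $F_\ast(V)$ with flags in $V$ is unambiguous on each stratum; once that is in place, the remainder is a direct translation of the Grassmannian argument.
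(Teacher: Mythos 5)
Your plan reproduces the paper's intended (but essentially unstated) argument: the paper merely asserts that Corollary~\ref{cor-tree-band} ``follows immediately from the analogous statements for the quiver Grassmannians,'' and the flag-variety analogue of Theorem~\ref{thm-Grad} combined with the same iterated nice gradings from the proof of Theorem~\ref{thm-tree-band}\eqref{part-tree-band-1} is exactly what that means. The obstacles you anticipate at the end do not in fact arise: each nice grading used there is stable on $\Gr_{\ud}(F_\ast(V))$ for every $\ud$ simultaneously (Lemmas~\ref{lem-stab-homo} and~\ref{lem-nice-Grad-stabil}), and because a fixed subrepresentation decomposes uniquely as $U_i=\bigoplus_{j\in F_0^{-1}(i)}W_j$ with $W\subseteq V$, an inclusion $U^{(1)}\subseteq U^{(2)}$ of fixed subrepresentations passes to $W^{(1)}_j\subseteq W^{(2)}_j$ for each $j$, so a $\partial$-homogeneous flag in $F_\ast(V)$ corresponds unambiguously to a flag in $V$ with dimension vectors in $\uF^{-1}(\ud^{(i)})$.
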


\begin{ex}\label{ex-fla-Kron}
 Let $Q=(1\rightrightarrows2)$, $n\in\NN$ with $n\geq3$ and $M$ an indecomposable module with dimension $2n$. Then \begin{align*}\chi_{(1,2),(2,3)}\left(M\right)
 =8(n-2).\end{align*} A detailed proof of this equation is given in Section~\ref{sec-flag}. For this calculation it is enough to count flags of successor closed subquivers of the quiver
\begin{align*}
\vcenter{\begin{xy}\SelectTips{cm}{}\xymatrix@-1pc{
  1^{(1)}\ar[dr]&&1^{(2)}\ar[dl]\ar[dr]&&&&1^{(n)}\ar[dl]\ar[dr]\\
  &2^{(1)}&&2^{(2)}&\ldots&2^{(n-1)}&&2^{(n)}
  }\end{xy}}\end{align*} associated to the dimension vectors $(1,2)$ and $(2,3)$.
\end{ex}

\begin{cor}
  Let $Q$ be a quiver, $M$ a tree  module and ${\ud^{(1)},\ldots,\ud^{(r)}}$ dimension vectors of $Q$ such that $\Gr_{\ud^{(1)},\ldots,\ud^{(r)}}(M)$ is non-empty. Then $\chi_{\ud^{(1)},\ldots,\ud^{(r)}}(M)>0$.
\end{cor}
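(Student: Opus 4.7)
The proof will follow the same pattern as Corollary~\ref{cor-pos}, but applied to the flag variety version of the grading argument. Write $M=F_\ast(V)$ where $F\colon S\to Q$ is a tree and $V$ is the canonical $S$-representation with all $V_i$ one-dimensional and all $V_a$ non-zero. By Corollary~\ref{cor-tree-band},
\begin{align*}
\chi_{\ud^{(1)},\ldots,\ud^{(r)}}(M)={\sum}_{\ut^{(i)}\in\uF^{-1}(\ud^{(i)})}\chi_{\ut^{(1)},\ldots,\ut^{(r)}}(V),
\end{align*}
and, by the ``in particular'' part of the same corollary, each summand on the right-hand side is the number of flags of successor-closed subquivers of $S$ with the prescribed dimension vectors. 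Each such count is a non-negative integer, so $\chi_{\ud^{(1)},\ldots,\ud^{(r)}}(M)\geq 0$ is immediate.

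For strict positivity, the plan is to exhibit at least one combinatorial flag. Starting from the hypothesis that $\F_{\ud^{(1)},\ldots,\ud^{(r)}}(M)$ is non-empty, I would iterate the flag-variety analogue of Theorem~\ref{thm-Grad} (the same analogue already used to derive Corollary~\ref{cor-tree-band}) along the sequence of stable gradings that appears in the proof of Theorem~\ref{thm-tree-band}. At each step the flag variety in question is non-empty and closed, so the last sentence of Theorem~\ref{thm-Grad} guarantees that the subset of flags admitting a homogeneous basis (at each level) is again non-empty and closed. After applying all the relevant gradings, each member $U^{(i)}$ of the resulting flag is spanned by a subset of the distinguished tree basis of $M$; such a flag corresponds precisely to a flag of successor-closed subquivers of $S$ with dimension vectors in $\uF^{-1}(\ud^{(i)})$. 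Hence at least one summand in the displayed formula is positive, proving $\chi_{\ud^{(1)},\ldots,\ud^{(r)}}(M)>0$.

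The main (only) obstacle is making sure the non-emptiness statement of Theorem~\ref{thm-Grad}, together with its iterated use, carries over unchanged from Grassmannians to flag varieties; this extension is the content of Corollary~\ref{cor-tree-band} and its proof, so once that generalization is in place the argument above is a straightforward transcription of the proof of Corollary~\ref{cor-pos}.
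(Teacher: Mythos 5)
Your proof is correct and follows exactly the route the paper intends: the paper explicitly says the flag-variety corollaries "follow immediately from the analogous statements for the quiver Grassmannians," and your argument is the direct transcription of the proof of Corollary~\ref{cor-pos} to the flag setting, using non-negativity from the combinatorial count in Corollary~\ref{cor-tree-band} and strict positivity from iterated application of the non-emptiness clause of (the flag-variety analogue of) Theorem~\ref{thm-Grad}.
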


\subsection{Ringel-Hall algebras}

 We are studying the products of functions of the form $\Eins_{\uF,\uB,\un}$ in $\mathcal H(A)$. Using Section~\ref{sec-red-ind}, it is enough to consider the images of indecomposable $A$-modules. Theorem~\ref{thm-RH} shows: To compute $\left(\Eins_{\uF,\uB,\un}\ast \Eins_{\uF',\uB',\un'}\right)(F_\ast(V))$ with a tree or band $F\colon S\to Q$ we can consider some combinatorics and the products $\left(\Eins_{\widetilde\uF,\widetilde\uB,\un}\ast \Eins_{\widetilde\uF',\widetilde\uB',\un'}\right)(V)$, where $S$ is a tree or a quiver of type $\tilde A_{l-1}$.

\begin{prop}\label{prop-RH-1}  Let $A$ be a finite-dimensional algebra, $\uF$ and $\uF'$ be tuples of trees, $\uB$ and $\uB'$ tuples of bands and $\un$ and $\un'$ tuples of positive integers.
\begin{enumerate} 
\item\label{part-RH-1} Let $F_\ast(V)$ be a tree module of $A$ such that \begin{align*}\left(\Eins_{\uF,\uB,\un}\ast \Eins_{\uF',\uB',\un'}\right)(F_\ast(V))\neq0.\end{align*}
 Then $l(\uB)=l(\uB')=0$.
 \item\label{part-RH-2}  Let $B_\ast(\lambda,m)$ be a band module of $A$ with $\lambda\in\CC^\ast$ such that \begin{align*}\left(\Eins_{\uF,\uB,\un}\ast \Eins_{\uF',\uB',\un'}\right)(B_\ast(\lambda,m))\neq0.\end{align*} Then $\uB,\uB'\in\{0,(B)\}$,   $\uF$ and $\uF'$ are tuples of strings and $l(\uF)=l(\uF')$.
\end{enumerate}
\end{prop}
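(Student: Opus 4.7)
The plan is to recast each product evaluation as the Euler characteristic of an explicit locally closed subvariety of $\Gr(M)$ and then apply Theorem~\ref{thm-Grad} along the stable gradings used in the proof of Theorem~\ref{thm-tree-band} to reduce non-vanishing to a purely combinatorial question. Concretely, set
\[
X \;=\; \Big\{\, N\in\Gr(M) \;\Big|\; N\text{ has iso-type }(\uF,\uB,\un),\ M/N\text{ has iso-type }(\uF',\uB',\un') \,\Big\}.
\]
Then $X$ is locally closed and $\GL(\CC)$-stable, and by definition of the Hall product $(\Eins_{\uF,\uB,\un}\ast\Eins_{\uF',\uB',\un'})(M)=\chi(X)$; non-vanishing forces $X\neq\emptyset$. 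Since $X$ is $\GL(\CC)$-stable, the $\CC^\ast$-action induced by any grading of $M$ leaves $X$ invariant, and by iterating Theorem~\ref{thm-Grad} along the same sequence of gradings $\partial_1,\ldots,\partial_k$ used in the proof of Theorem~\ref{thm-tree-band} for the relevant quiver $S$, we reduce to showing that $X^{\partial_1,\ldots,\partial_k}$ is non-empty. For a sufficiently fine such sequence the fixed points are precisely the submodules $N\subseteq M$ spanned by a subset of the distinguished basis of $M$.

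For part~\eqref{part-RH-1}, write $M=F_\ast(V)$ with $F\colon S\to Q$ a tree. A submodule $N$ spanned by a subset of the basis of $F_\ast(V)$ corresponds to a successor-closed subquiver $S'\subseteq S$, and $M/N$ corresponds to its predecessor-closed complement. Since $S$ is a tree, every connected component of $S'$ and of its complement is again a tree, so both $N$ and $M/N$ decompose as direct sums of tree modules obtained by restricting $F$ to these components. In particular neither has a band-module summand, forcing $l(\uB)=l(\uB')=0$.

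For part~\eqref{part-RH-2}, write $M=B_\ast(\lambda,m)$ with $B\colon S\to Q$ and $S$ of type $\tilde A_{l-1}$. The combinatorial unfolding of $B_\ast(\lambda,m)$ (visualized for $m=3$ as Figure~\ref{fig-band-mod}) shows that any successor-closed sub-basis decomposes into a central cyclic part, either empty or the cycle underlying a smaller copy of $B$, together with a family of outer arc-shaped pieces. The central part yields either $0$ or a band module $B_\ast(\lambda,m_1)$ for the same band $B$, the uniqueness of this sub/quotient being built into Remark~\ref{rem-eind-band}. Each outer arc is a proper connected subquiver of the cycle $S$, hence of type $A$, so each corresponding summand of $N$ is a string module. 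The same analysis applied to the complementary sub-basis gives $\uB'\in\{0,(B)\}$ and $\uF'$ a tuple of strings. To obtain $l(\uF)=l(\uF')$, I would match outer arcs of $N$ with outer arcs of $M/N$ by following the boundary between the two sub-bases around the cycle: each transition of the boundary contributes exactly one arc to each side, giving a bijection between the string summands of $N$ and those of $M/N$.

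The main obstacle I expect is the combinatorial bookkeeping in part~\eqref{part-RH-2}: establishing the pairing between outer strings of $N$ and outer strings of $M/N$, and verifying that the central cyclic part of each of $N$ and $M/N$ is genuinely a band module for the same $B$ (rather than for a proper divisor, which is excluded by the non-periodicity condition on $B$ recalled in Example~\ref{ex-band-zerl}). This will require a careful case analysis organized around the relative positions of the sources and sinks $\{i_k,i'_k\}$ of $S$ within the successor-closed sub-basis, as depicted in Figure~\ref{fig-tAl}.
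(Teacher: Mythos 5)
Your overall strategy---fixing the locally closed set $X$, then reducing to torus fixed points via gradings, then reading off the structure of a torus-fixed submodule and its quotient---is the right spirit and agrees with the paper's approach in outline, and your argument for part~\eqref{part-RH-1} (successor-closed subquivers of a tree are disjoint unions of trees) is essentially the paper's observation that all sub- and factormodules of a one-dimensional tree representation are tree modules. However there are three gaps, the last two of which are serious for part~\eqref{part-RH-2}.

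First, the claim ``since $X$ is $\GL(\CC)$-stable, the $\CC^\ast$-action induced by any grading of $M$ leaves $X$ invariant'' is not correct as stated. For a nice grading $\partial$, the linear map $\varphi_\partial(\lambda)$ is generally \emph{not} an automorphism of the module $M$, so $\varphi_\partial(\lambda)U$ need not be isomorphic to $U$, nor $M/\varphi_\partial(\lambda)U$ to $M/U$; $\GL(\CC)$-stability on $\Rep_{\ud}(A)$ does not give this. The stability of $X$ under the relevant nice gradings is precisely the content of Lemmas~\ref{lem-stab-tree} and~\ref{lem-stab-band}, and you must invoke them (the paper packages this into Theorem~\ref{thm-RH}).

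Second, and more critically for part~\eqref{part-RH-2}: the assertion that ``for a sufficiently fine such sequence the fixed points are precisely the submodules $N\subseteq M$ spanned by a subset of the distinguished basis'' is false for band modules with $m>1$. Nice gradings are built from labellings of $S_0$ (Remark~\ref{rem-basis-FV}) and cannot separate the ``layers'' $e_{i1},\ldots,e_{im}$ over a fixed vertex $i$, because the arrow maps link $e_{j,k}$ with $e_{j,k-1}$ and any nice grading must assign them the same value. Indeed the paper's own Lemma~\ref{lem-band} does \emph{not} compute $\chi_{\ut}(V)$ as a subset count: it proceeds via the Gau\ss{} algorithm, an affine fibration, and an induction. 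So the combinatorial model of $X^{\partial_1,\ldots,\partial_k}$ as ``central cycle plus arcs'' is not a description of the actual fixed-point set.

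Third, even granting a combinatorial model, your argument for $l(\uF)=l(\uF')$ by matching outer arcs across the boundary is left unproved and is exactly the bookkeeping you flag as the main obstacle. The paper avoids it with a short defect/rank count: after reducing via Theorem~\ref{thm-RH} to $A=\CC Q$ with $Q$ of type $\tilde A_{l-1}$, it notes that for a string submodule $U$ of a band $V$ the quotient $W=V/U$ satisfies $\dim(W)-\sum_{a\in Q_1}\rk(W_a)=1$; since on a direct sum of strings and bands this quantity counts the number of string summands, $V/U$ has exactly one string summand, and $l(\uF)=l(\uF')$ follows by induction. That argument is shorter, avoids the boundary analysis entirely, and relies only on Remark~\ref{rem-eind-band} and Remark~\ref{rem-ART-Atilde}. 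I would recommend replacing the boundary-matching step by this rank count.
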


By Theorem~\ref{thm-RH} and this proposition, the calculation of the image of a tree module under a product $\Eins_{\uF,\uB,\un}\ast \Eins_{\uF',\uB',\un'}$ is a purely combinatorial task. It is enough to count suitable successor closed subquivers of $S$ to calculate  $\left(\Eins_{\uF}\ast \Eins_{\uF'}\right)(F_\ast(V))$.

\begin{ex}
 Let $F$ be the string described by the following picture. \begin{align*}F\colon\left(\vcenter{\begin{xy}\SelectTips{cm}{}\xymatrix@-1pc{
  &1\ar[dl]_\alpha\ar[dr]^{\beta}&&&1'\ar[dl]_{\beta'}\ar[dr]^{\alpha'}\\
  2&&3\ar[r]^\gamma&3'&&2'
  }\end{xy}}\right)\to
Q=\left(\vcenter{\begin{xy}\SelectTips{cm}{}\xymatrix@-1pc{
  &1\ar[dl]_\alpha\ar[dr]^\beta\\
  2&&3\ar@(ur,dr)[]^\gamma}\end{xy}}\right)\end{align*} Let $\uF=(2\to Q,(3\stackrel\gamma\to3')\to Q)$ and $\uF'=(1\to Q,(1\stackrel\alpha\to2)\to Q)$. Then $\left(\Eins_{\uF}\ast \Eins_{\uF'}\right)(F_\ast(V))=2$ by counting suitable subquivers.
\end{ex}

\begin{prop}\label{prop-RH-2} Let $Q$ be a quiver of type $\tilde A_{l-1}$, $\uF$ and $\uF'$ be tuples of strings, $B\colon Q\to Q$ the identity winding, $m\in\NN$ and $\lambda\in\CC^\ast$.
\begin{enumerate} 
 \item\label{part-RH-3} Let $n,n'\in\NN$ with $n+n'\leq m$. Then \begin{align}\label{gl-lem-RH-B}\left(\Eins_{\uF,B,n}\ast \Eins_{\uF',B,n'}\right)(B_\ast(\lambda,m))&=\left(\Eins_{\uF}\ast \Eins_{\uF'}\right)(B_\ast(\lambda,m-n-n')).\end{align}

 \item\label{part-RH-4} Let $n\in\NN$,  $F$ a string and $\uF(n)=(F,\ldots,F)$ with $l\left(\uF(n)\right)=n$ such that $F_\ast(V)$ and $F^{(i)}_\ast(V)$ are preprojective, $\dim\left(F_\ast(V)\right)\geq\dim\left(F^{(i)}_\ast(V)\right)$ and $F_\ast(V)\ncong F^{(i)}_\ast(V)$ for all $i$. Then
 \begin{align}&\left(\Eins_{\uF(n)\dot\cup\uF}\ast \Eins_{\uF'}\right)(B_\ast(\lambda,m))\\
 \nonumber=&{\sum}_{k_1,\ldots,k_n\in\NN}\left(\Eins_{\uF}\ast \Eins_{\uF'}\right)\left(B_\ast\left(\lambda,m-{\sum}_{i=1}^nk_i\right)\oplus{\bigoplus}_{i=1}^n I_{k_i}\right)
\end{align} with $I_{k_i}$ is an indecomposable module and $\udim(I_{k_i})=\udim(B_\ast(\lambda,k_i))-\udim(F_\ast(V))$ for all $i$.
\end{enumerate}
\end{prop}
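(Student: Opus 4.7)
The plan is to treat the two parts by pinning down how submodules $N \subseteq M = B_\ast(\lambda,m)$ of the prescribed isomorphism types must sit inside the tube containing $M$.

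For part~\eqref{part-RH-3}, the band summand $B_\ast(\mu,n)$ of $N$ embeds into $M$, and the band summand $B_\ast(\mu',n')$ of $M/N$ receives a surjection from $M$. Since $M$ has a regular filtration with all composition factors isomorphic to the simple regular of the $\lambda$-tube, the same must hold for any submodule and any quotient, forcing $\mu=\mu'=\lambda$. Remark~\ref{rem-eind-band} then gives a unique submodule $M_n \subseteq M$ isomorphic to $B_\ast(\lambda,n)$, necessarily the image of the band summand of $N$, and a unique submodule $K \subseteq M$ with $M/K \cong B_\ast(\lambda,n')$, necessarily containing $N$ as the kernel of the composite $M \twoheadrightarrow M/N \twoheadrightarrow B_\ast(\lambda,n')$. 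Since sub-bands of $M$ are uniserial, $K \cong B_\ast(\lambda,m-n')$ contains $M_n$ and $K/M_n \cong B_\ast(\lambda,m-n-n')$. The assignment $N \mapsto N/M_n$ is an isomorphism of varieties identifying the locally closed subset computing the left-hand side with the one computing $(\Eins_{\uF}\ast\Eins_{\uF'})(B_\ast(\lambda,m-n-n'))$, which gives part~\eqref{part-RH-3}.

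For part~\eqref{part-RH-4}, write $P=F_\ast(V)$. Each of the $n$ copies of $P$ in a decomposition of $N$ injects into $M$ and, via the chain of sub-bands $0 \subsetneq B_\ast(\lambda,1) \subsetneq \ldots \subsetneq B_\ast(\lambda,m)=M$, lies in a unique smallest $B_\ast(\lambda,k_i)$; by the existence clause of Remark~\ref{rem-ART-Atilde} and the fact that non-regular indecomposables are determined by their dimension vectors, the quotient $B_\ast(\lambda,k_i)/P$ is the preinjective $I_{k_i}$. The nonisomorphism hypothesis $P \ncong F^{(i)}_\ast(V)$ together with $\dim P \geq \dim F^{(i)}_\ast(V)$ makes the distinguished $n$ summands canonical up to permutation. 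I would then stratify the Grassmannian computing the left-hand side by the multi-index $(k_1,\ldots,k_n)$ and apply the grading technique of Theorem~\ref{thm-Grad} with a $\CC^\ast$-action adapted to the tube filtration of $M$: on each stratum the homogeneous submodules decompose along the sub-band chain, so that the stratum has Euler characteristic equal to that of the corresponding locally closed subset in the quiver Grassmannian of $B_\ast(\lambda,m-\sum_i k_i) \oplus \bigoplus_i I_{k_i}$ computing $(\Eins_\uF \ast \Eins_{\uF'})$ of that module. Summing over $(k_1,\ldots,k_n)$ yields the right-hand side.

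The main obstacle will be the stratification-plus-grading argument in part~\eqref{part-RH-4}: handling multi-indices where several $k_i$ coincide, so that several copies of $P$ sit inside the same $B_\ast(\lambda,k)$ as distinct submodules. Here one must verify that the grading truly decouples the copies and that the resulting direct sum decomposition is compatible with the prescribed isomorphism types of both $N$ and $M/N$; the preprojective hypothesis on $P$ is precisely what ensures that the copies lift cleanly into the homogeneous strata and produce preinjective quotients of the correct dimension vector.
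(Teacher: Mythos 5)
Your treatment of part~\eqref{part-RH-3} matches the paper's: Remark~\ref{rem-eind-band} singles out the unique sub-band $M_n\cong B_\ast(\lambda,n)$ of $M$ and the unique $K\subseteq M$ with $M/K\cong B_\ast(\lambda,n')$, any eligible $N$ is squeezed between them, and $N\mapsto N/M_n$ is the desired identification. What you assert without proof is that this map is an isomorphism of varieties, which requires the surjectivity direction: given a submodule $\overline N\subseteq K/M_n\cong B_\ast(\lambda,m-n-n')$ of the prescribed type, its preimage $N'\subseteq K$ sits in two short exact sequences
\[
0\to B_\ast(\lambda,n)\to N'\to \overline N\to 0,\qquad 0\to K'\to M/N'\to B_\ast(\lambda,n')\to 0,
\]
and one must argue both split so that $N'$ and $M/N'$ have the prescribed direct-sum decompositions. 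This is exactly where Remark~\ref{rem-ART-Atilde} is invoked in the paper: the string summands of $\overline N$ are necessarily preprojective (they embed in a band) and those of $K'$ are necessarily preinjective (they are quotients of a band), so both sequences split. You should state this step rather than declare the isomorphism.

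For part~\eqref{part-RH-4} there is a genuine gap, which your final paragraph flags without closing. The direct stratification by $(k_1,\dots,k_n)$ is not even well defined from the data you extract: the hypotheses (via $\Hom(P,F^{(i)}_\ast(V))=0$ in Remark~\ref{rem-ART-Atilde}) make the \emph{submodule} $P^n\subseteq N$ canonical, but not the individual copies of $P$, so ``the smallest $B_\ast(\lambda,k_i)$ containing the $i$-th copy'' depends on a non-canonical choice whenever several copies overlap. Even granting an honest invariant producing $(k_1,\dots,k_n)$, the claim that a single grading decouples copies of $P$ sitting inside a common $B_\ast(\lambda,k)$ and produces the summand $\bigoplus_i I_{k_i}$ is precisely the content to be proved, and the sketch supplies no mechanism for it. The paper avoids the issue entirely: Lemma~\ref{lem-prop-RH-2-4} peels off a single copy of $F_\ast(V)$ at a time, using the explicit Gaussian-elimination coordinates from the proof of Lemma~\ref{lem-band} to stratify only by the smallest index $k=j_1$, an affine projection $X_k\to X_k^0$ to normalize the off-diagonal parameters $\lambda_{1j}$, an R-grading to split off the distinguished copy, and the coproduct $\Delta$ to track how $\uF$ and $\uF'$ redistribute over $M^{(m-k)}\oplus I_k$; Proposition~\ref{prop-RH-2}\eqref{part-RH-4} then follows by iterating the lemma, not by an all-at-once stratification. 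To turn your outline into a proof you would have to reconstruct this one-at-a-time inductive machinery, or supply a genuinely new argument for why the simultaneous multi-index stratification behaves as claimed.
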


If $\udim(B_\ast(\lambda,k_i))-\udim(F_\ast(V))>0$, the module $I_k$ exists, is preinjective and determined up to isomorphism uniquely by Remark~\ref{rem-ART-Atilde}.

Let $Q$ be a quiver of type $\tilde A_{l-1}$, $\uF''$ and $\uF'$ be tuples of strings, $B\colon Q\to Q$ the identity winding, $m\in\NN$ and $\lambda\in\CC^\ast$ such that $\left(\Eins_{\uF''}\ast \Eins_{\uF'}\right)(B_\ast(\lambda,m))\neq0$. Without loss of generality we can assume that $\dim\left(F''^{(1)}_\ast(V)\right)\geq\dim\left(F''^{(i)}_\ast(V)\right)$ for all $i$. Then $F''^{(i)}_\ast(V)$ is preprojective for all $i$ and we can apply Theorem~\ref{prop-RH-2}\eqref{part-RH-4} with $F=F''^{(1)}$ and $\uF=\{F''^{(i)}|F''^{(i)}(V)\ncong F(V)\}$. Thus the following corollary follows.

\begin{cor}\label{cor-RH-pur-com} Let $A=\CC Q/I$ be a finite-dimensional algebra, $M$ a direct sum of tree and a band modules of $Q$ such that $M$ is an $A$-module. Let $\uF$ and $\uF'$ be tuples of trees, $\uB$ and $\uB'$ tuples of bands and $\un$ and $\un'$ tuples of positive integers. Then $\Eins_{\uF,\uB,\un}\ast \Eins_{\uF',\uB',\un'}(M)$ can be computed combinatorially.
\end{cor}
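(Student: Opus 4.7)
\emph{Proof proposal.} The plan is to chain together the reductions established in this paper to peel the product $\Eins_{\uF,\uB,\un}\ast \Eins_{\uF',\uB',\un'}$ off $M$ one structural layer at a time, ending at a purely combinatorial count. First, using the direct-sum reduction from Section~\ref{sec-red-ind}, I would split $M = M_1 \oplus \cdots \oplus M_k$ into its indecomposable summands (each a tree or band module by hypothesis) and express $(\Eins_{\uF,\uB,\un}\ast \Eins_{\uF',\uB',\un'})(M)$ as a finite combinatorial sum of products of the same form evaluated on individual $M_j$. This reduces the problem to the case where $M$ is itself a single indecomposable tree or band module.

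Suppose first $M = F_\ast(V)$ is a tree module with $F\colon S \to Q$ a tree. Theorem~\ref{thm-RH} rewrites
\begin{align*}
\left(\Eins_{\uF,\uB,\un}\ast \Eins_{\uF',\uB',\un'}\right)(F_\ast(V)) = {\sum} \left(\Eins_{\widetilde\uF,\widetilde\uB,\un}\ast \Eins_{\widetilde\uF',\widetilde\uB',\un'}\right)(V),
\end{align*}
a finite sum of products in $\mathcal H(\CC S)$. Since $V$ is a tree module over $S$, Proposition~\ref{prop-RH-1}\eqref{part-RH-1} forces $l(\widetilde\uB)=l(\widetilde\uB')=0$ on every contributing summand, so each surviving term has the form $(\Eins_{\widetilde\uF} \ast \Eins_{\widetilde\uF'})(V)$. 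By Corollary~\ref{cor-tree-band} this is a count of nested successor-closed subquivers of $S$, which is purely combinatorial.

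Now suppose $M = B_\ast(\lambda,m)$ is a band module. Theorem~\ref{thm-RH} again reduces matters to $\mathcal H(\CC S)$ with $S$ of type $\tilde A_{l-1}$, and Proposition~\ref{prop-RH-1}\eqref{part-RH-2} restricts the surviving contributions to those where $\widetilde\uB,\widetilde\uB'$ are at most a single copy of the identity winding $B\colon S \to S$ and $\widetilde\uF,\widetilde\uF'$ are tuples of strings. Proposition~\ref{prop-RH-2}\eqref{part-RH-3} then peels off any identity-band factor, leaving a product $(\Eins_{\widetilde\uF} \ast \Eins_{\widetilde\uF'})(B_\ast(\lambda, m'))$ of pure string tuples. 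I iterate Proposition~\ref{prop-RH-2}\eqref{part-RH-4}: at each stage, select a string in $\widetilde\uF$ whose module has maximal dimension (hence preprojective by Remark~\ref{rem-ART-Atilde} and the observation preceding the corollary), and rewrite the product in terms of a strictly shorter tuple evaluated on a smaller band module direct-summed with explicit preinjective indecomposables. Reapplying the Section~\ref{sec-red-ind} direct-sum reduction returns us to the indecomposable case with $l(\widetilde\uF)+l(\widetilde\uF')$ strictly decreased, so the recursion terminates on products of empty tuples, whose values are combinatorially transparent characteristic functions of specific isomorphism classes.

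The main obstacle will be controlling the iteration in Proposition~\ref{prop-RH-2}\eqref{part-RH-4}: one must check that, under the maximal-dimension selection, every remaining element of $\widetilde\uF$ really is preprojective so that Remark~\ref{rem-ART-Atilde} supplies the required indecomposable preinjectives $I_{k_i}$, and that the preinjective summands subsequently produced do not spawn new band contributions when Theorem~\ref{thm-RH} and Proposition~\ref{prop-RH-1} are invoked downstream. Once this bookkeeping is verified, every leaf of the recursion is either a successor-closed-subquiver count on a tree or on a type-$\tilde A$ quiver or a trivially evaluable characteristic function, and the computation as a whole is combinatorial.
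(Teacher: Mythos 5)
Your proposal follows essentially the same chain of reductions the paper sketches in the two paragraphs preceding the corollary: split $M$ into indecomposable summands via Section~\ref{sec-red-ind}, push the product down to $\mathcal H(\CC S)$ with Theorem~\ref{thm-RH}, kill or isolate band factors with Proposition~\ref{prop-RH-1}, strip identity-band contributions with Proposition~\ref{prop-RH-2}\eqref{part-RH-3}, and iterate Proposition~\ref{prop-RH-2}\eqref{part-RH-4} on the maximal-dimension string. The only slight imprecision is your appeal to Corollary~\ref{cor-tree-band} for the tree-module base case: that corollary concerns Euler characteristics of plain quiver flag varieties $\F_{\ud^{(1)},\ldots,\ud^{(r)}}(F_\ast(V))$, whereas evaluating $(\Eins_{\widetilde\uF}\ast\Eins_{\widetilde\uF'})(V)$ additionally constrains the isomorphism types of sub and quotient, so the combinatorial count actually rests on Lemma~\ref{lem-stab-tree} together with Theorem~\ref{thm-Grad}, as the paper's own phrase ``suitable successor closed subquivers'' implicitly indicates.
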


\subsection{String algebras} In this section we consider the Ringel-Hall algebras of string algebras.

\begin{cor}\label{cor-RH-F-B}  Let $A$ be a string algebra. Let $\uF$ be a tuple of strings, $\uB$ a tuple of bands and $\un$ a tuple of positive integers. Then
\begin{align*} \Eins_{\uF}\ast \Eins_{\uB,\un}=\Eins_{\uF,\uB,\un}=\Eins_{\uB,\un}\ast \Eins_{\uF}.\end{align*}
\end{cor}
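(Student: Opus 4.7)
The plan is to verify the identity $\Eins_\uF \ast \Eins_{\uB,\un} = \Eins_{\uF,\uB,\un}$ pointwise on the module variety; the symmetric equality $\Eins_{\uB,\un}\ast \Eins_\uF = \Eins_{\uF,\uB,\un}$ will then follow by the same argument with the roles of submodule and quotient interchanged. Fix an $A$-module $M$, set $S := \bigoplus_i F^{(i)}_\ast(V)$, and for each tuple $\lambda = (\lambda_j)\in(\CC^\ast)^s$ set $B_\lambda := \bigoplus_j B^{(j)}_\ast(\lambda_j,n_j)$. By the definition of the Hall product,
\[
  (\Eins_\uF \ast \Eins_{\uB,\un})(M) = \chi(X), \qquad X := \Big\{N \subseteq M \,\Big|\, N \cong S,\ M/N \cong B_\lambda \text{ for some } \lambda\Big\},
\]
while $\Eins_{\uF,\uB,\un}(M) = 1$ if $M \cong S \oplus B_\lambda$ for some $\lambda$ and $0$ otherwise.

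First I will show that $X$ is non-empty iff $M \cong S \oplus B_\lambda$ for some $\lambda$. One direction is clear from the canonical embedding. For the converse it suffices to establish that in the string algebra $A$ every short exact sequence $0 \to S \to M \to B_\lambda \to 0$ with $S$ a sum of string modules and $B_\lambda$ a sum of band modules must split, i.e.\ $\mathrm{Ext}^1_A(B_\lambda,S)=0$. I expect this to be the main technical obstacle. The cleanest route is via the Butler--Ringel combinatorial description of $\mathrm{Ext}^1$ between string and band modules: a non-trivial Ext-class corresponds to an ``overlap'' between the underlying walks of the modules, and overlaps between a cyclic walk (for a band) and a finite walk (for a string) of the kind required to build a non-split extension are incompatible with the string algebra conditions (a)--(e) of Section~\ref{sec-dfn-string-alg}.

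Granted Step~1, I restrict to $M \cong S \oplus B_\lambda$ and compute $\chi(X)$ via Theorem~\ref{thm-Grad}. Choose a basis $E = E_S \sqcup E_{B_\lambda}$ of $M$ compatible with the decomposition and define a grading $\partial\colon E \to \ZZ$ by $\partial(e) = 0$ for $e \in E_S$ and $\partial(e) = 1$ for $e \in E_{B_\lambda}$. The induced $\CC^\ast$-action fixes $S$ pointwise and acts as scalar multiplication on $B_\lambda$; because $S$ and $B_\lambda$ are subrepresentations of $M$, the action commutes with every arrow map and preserves subrepresentations, so $\partial$ is stable on $X$. Theorem~\ref{thm-Grad} then gives $\chi(X) = \chi(X^\partial)$. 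A fixed point $N \in X^\partial$ decomposes as $N = N_S \oplus N_B$ with $N_S \subseteq S$ and $N_B \subseteq B_\lambda$; combining (i) $N_S \oplus N_B \cong S$ (a sum of strings), (ii) $(S/N_S) \oplus (B_\lambda/N_B) \cong B_{\lambda'}$ for some $\lambda'$ (a sum of bands), Krull--Schmidt, and the observation that a string module is never a direct summand of a sum of band modules, one deduces $N_S = S$ and $N_B = 0$. Hence $X^\partial = \{S \oplus 0\}$ is a single point and $\chi(X) = 1 = \Eins_{\uF,\uB,\un}(M)$. If on the other hand $M \not\cong S \oplus B_\lambda$ for any $\lambda$, then by Step~1 we have $X = \emptyset$ and both sides vanish. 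This completes the first equality; the second equality is obtained by the identical argument applied to $X' = \{N' \subseteq M \mid N' \cong B_\lambda \text{ for some } \lambda,\ M/N' \cong S\}$, using the dual splitting $\mathrm{Ext}^1_A(S, B_\lambda) = 0$.
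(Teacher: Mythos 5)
The proposal has a genuine gap in Step 1. The claim that every short exact sequence $0 \to S \to M \to B_\lambda \to 0$ with $S$ a sum of string modules and $B_\lambda$ a sum of band modules splits, i.e.\ that $\mathrm{Ext}^1_A(B_\lambda,S)=0$, is \emph{false}. A concrete counterexample already occurs for the Kronecker quiver $Q=(1\rightrightarrows 2)$: take $S=S_2$ the simple at vertex $2$ (a string module, and a preprojective representation) and $B_\lambda$ a band module of dimension vector $(1,1)$. Then $\Hom(B_\lambda,S_2)=0$, while the Euler form gives $\langle (1,1),(0,1)\rangle=-1$, so $\dim\mathrm{Ext}^1(B_\lambda,S_2)=1\neq 0$. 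The non-split middle term is $P(1)$ of dimension $(1,2)$, and indeed for $M=P(1)$ the set $X=\{N\subseteq M\mid N\cong S_2,\ M/N\cong B_\mu\text{ for some }\mu\}$ is not empty — it is $\mathbb P^1$ minus two points. The corollary is nevertheless true because this set has Euler characteristic $0$, not because it is empty. Your argument would therefore need to replace the (false) emptiness claim by a proof that $\chi(X)=0$ whenever $M$ is not isomorphic to $\bigoplus_iF^{(i)}_\ast(V)\oplus\bigoplus_j B^{(j)}_\ast(\lambda_j,n_j)$; the R-grading argument you use in Step 2 only handles the split case. The parallel claim $\mathrm{Ext}^1_A(S,B_\lambda)=0$ used for the second equality fails for the same reason (e.g.\ $S=S_1$, again in the Kronecker case). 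Finally, even setting aside its falsity, the Ext-vanishing is only asserted via a gesture toward Butler--Ringel, not proved, so Step 1 would be incomplete in any case.

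For comparison, the paper's proof does not attempt a pointwise Euler-characteristic computation. Instead it uses the cocommutative bialgebra structure on $C(A)$ (Lemma~\ref{lem-ind-enough} and Example~\ref{ex-ind-enough}) to reduce the evaluation of $\Eins_{\uF}\ast\Eins_{\uB,\un}$ on an arbitrary module to evaluations on its indecomposable summands, and then invokes Proposition~\ref{prop-RH-1}: on a string module the band parts of both factors must be empty, while on a band module the constraint $l(\uF)=l(\uF')$ forces the tree part to be empty. This automatically makes the mixed contributions drop out and gives the desired identity without ever having to analyse the non-split geometry directly. If you want to salvage a direct approach along your lines, you would need to supplement Step 2 with a torus-action argument on the non-split locus showing its Euler characteristic vanishes; this is not trivial and is precisely what the paper's reduction to indecomposables circumvents.
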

\begin{ex}\label{ex-RH-Kron}
 Let $Q=(1\rightrightarrows2)$, $\uF$ and $\uF'$ tuples of strings, $B\colon Q\to Q$ the identity winding and $m\in\NN$ such that $\left(\Eins_{\uF}\ast \Eins_{\uF'}\right)(B_\ast(\lambda,m))\neq0$. Then \begin{align}\left(\Eins_{\uF}\ast \Eins_{\uF'}\right)(B_\ast(\lambda,m))=\frac{l(\uF)!}{\prod_{\{F^{(i)}|i\}/\cong}|[F^{(i)}]|!}\frac{l(\uF')!}{\prod_{\{F'^{(i)}|i\}/\cong}|[F'^{(i)}]|!}\label{gl-RH-Kron},\end{align} where $\{F^{(i)}|i\}/\cong$ is the set of isomorphism classes and $|[F^{(i)}]|$ is the number of elements in the isomorphism class of $F^{(i)}$. For instance,
\begin{align*}\left(\Eins_{S_2^{m-r}\oplus P(S_1)^r}\ast \Eins_{S_1^{m-s}\oplus I(S_2)^s}\right)(B_\ast(\lambda,m+r+s))=\begin{pmatrix}m\\r\end{pmatrix}\begin{pmatrix}m\\s\end{pmatrix}
\end{align*} with $m,r,s\in\NN$, $S_i\in\Mod(A)$ is the simple representation associated to the vertex $i\in Q_0$, $P(S_i)\in\Mod(A)$ is the projective cover of $S_i$ and $I(S_i)\in\Mod(A)$ is the injective hull of $S_i$.
\end{ex}

\begin{cor}\label{cor-RH-alg} Let $A$ be a string algebra. Then every function in $C(A)$ is a linear combination of functions of the form $\Eins_{\uF,\uB,\un}$ with some tuple $\uF$ of strings, some tuple $\uB$ of bands and some tuple $\un$ of positive integers.
\end{cor}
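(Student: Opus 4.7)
The plan is to verify that the $\CC$-linear span $\mathcal{V} \subseteq \mathcal{H}(A)$ of all functions of the form $\Eins_{\uF,\uB,\un}$ both contains the generating set $\{\Eins_{\ud}\}$ of $C(A)$ and is closed under the convolution product $\ast$. Since $C(A)$ is by definition the $\ast$-subalgebra generated by $\{\Eins_{\ud}\}$, these two facts yield $C(A) \subseteq \mathcal{V}$, which is the claim.

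First, I would show that $\Eins_{\ud} \in \mathcal{V}$ for every dimension vector $\ud$. Since $A$ is a string algebra, every indecomposable $A$-module is a string module or a band module, so every $A$-module of dimension vector $\ud$ is isomorphic to $\bigoplus_i F^{(i)}_\ast(V) \oplus \bigoplus_j B^{(j)}_\ast(\lambda_j, n_j)$ for some tuples $(\uF, \uB, \un)$ and scalars $\lambda_j \in \CC^\ast$. The total dimension being $|\ud|$ bounds the sizes of the source quivers of the $F^{(i)}$ and $B^{(j)}$ and the integers $n_j$, so only finitely many triples $(\uF, \uB, \un)$ occur. The functions $\Eins_{\uF,\uB,\un}$ associated to these triples are characteristic functions of a partition of $\Rep_{\ud}(A)$, so $\Eins_{\ud}$ is the finite sum $\sum_{(\uF,\uB,\un)} \Eins_{\uF,\uB,\un}$, and hence lies in $\mathcal{V}$.

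Next, I would check that $\mathcal{V}$ is closed under $\ast$. Fix tuples $(\uF,\uB,\un)$ and $(\uF',\uB',\un')$ and set $f = \Eins_{\uF,\uB,\un} \ast \Eins_{\uF',\uB',\un'}$. By construction $f$ is constructible and $\GL(\CC)$-stable, hence constant on isomorphism classes. For any $A$-module $M$, which is of some combinatorial type $(\uF'',\uB'',\un'')$ by the string algebra structure, Corollary~\ref{cor-RH-pur-com} asserts that $f(M)$ is computable by a combinatorial algorithm in the data $(\uF,\uB,\un)$, $(\uF',\uB',\un')$, and $(\uF'',\uB'',\un'')$. In particular the value is independent of the continuous band parameters $\lambda_j$ of $M$, so $f$ is constant on each preimage $\Eins_{\uF'',\uB'',\un''}^{-1}(1)$. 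As there are only finitely many types of given dimension vector, $f$ is a finite $\CC$-linear combination of the $\Eins_{\uF'',\uB'',\un''}$'s, hence lies in $\mathcal{V}$.

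The main technical obstacle will be making rigorous this $\lambda$-independence: one must trace through the combinatorial reduction underlying Corollary~\ref{cor-RH-pur-com} — which itself rests on Theorem~\ref{thm-RH} together with Propositions~\ref{prop-RH-1} and~\ref{prop-RH-2} — and confirm that every ingredient of the resulting algorithm depends only on discrete data (windings $\widetilde{\uF}$, successor-closed subquivers of the source quivers, integer tuples) and never on the parameters parameterizing the band summands of $M$. Once that is in hand, the two steps above combine to give $C(A) \subseteq \mathcal{V}$, and the corollary follows.
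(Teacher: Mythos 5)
Your proposal takes essentially the same route as the paper: it too writes $\Eins_{\ud}$ as a finite sum of functions $\Eins_{\uF,\uB,\un}$ and then shows the span of such functions is closed under $\ast$ by arguing that $\Eins_{\uF,\uB,\un}\ast\Eins_{\uF',\uB',\un'}(B_\ast(\lambda,m))$ is independent of $\lambda$. The paper supplies the $\lambda$-independence you flag as the remaining obstacle directly from Lemma~\ref{lem-ind-enough}, Example~\ref{ex-ind-enough}, and Proposition~\ref{prop-RH-2} by induction on dimension vectors, which is precisely what underlies Corollary~\ref{cor-RH-pur-com} that you cite.
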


 The proofs of these corollaries and of Equation~\eqref{gl-RH-Kron} are stated in Section~\ref{sec-string-alg}. In general it is much harder to give an explicit formula for $\left(\Eins_{\uF}\ast \Eins_{\uF'}\right)(B_\ast(\lambda,m))$.

\section{Gradings}\label{sec-Grad}

\subsection{Definitions} Let $Q$ be a quiver and $M=(M_i,M_a)_{i\in Q_0,a\in Q_1}$ a $Q$-representa-tion. Let $I=\{1,2,\ldots,\dim(M)\}$ and $E=\{e_j|j\in I\}$ be a basis of $\bigoplus_{i\in Q_0}M_i$ such that $E\subseteq\bigcup_{i\in Q_0}M_i$.

\begin{dfn} A map $\partial\colon E\to\ZZ$ is called a  \textit{grading} of $M$.
\end{dfn}
So every grading depends on the choice of a basis $E$. It is useful to change the basis during calculations. A vector $m=\sum_{j\in I} m_je_j\in M$ with $m_j\in\CC$ is called \textit{$\partial$-homogeneous} of degree $n\in\ZZ$ if $\partial\left(e_j\right)=n$ for all $j\in I$ with $m_j\neq0$. If $m\in M$ is $\partial$-homogeneous of degree $n\in\ZZ$, we set $\partial(m)=n$.

The following grading has been studied by Riedtmann \cite{Riedtmann}: Let $M=\bigoplus_{k=1}^r N_k$, where $N_k$ is a subrepresentation of $M$ for all $k$ and $E\subseteq\bigcup_{k=1}^r N_k$. Then the grading $\partial\colon E\to\ZZ$ with $\partial\left(e_j\right)=k$ if $e_j\in N_k$ is called \textit{Riedtmann grading} (or \textit{R-grading} for short).

\begin{dfn} Let $\partial$ and $\partial_1,\ldots,\partial_r$ be gradings of $M$ and $\Delta({\mathbf y},{\mathbf z},a)\in\ZZ$ for all $\mathbf y,\mathbf z\in\ZZ^r$ and $a\in Q_1$ such that \begin{align}\label{gl-nice} \Delta\Big((\partial_m(e_j))_{1\leq m\leq r},(\partial_m(e_i))_{1\leq m\leq r},a\Big)=\partial(e_i)-\partial(e_j) \end{align} for all $i,j\in I$ and $a\in Q_1$ with $e_i\in M_{t(a)}$, $e_j\in M_{s(a)}$ and $m_i\neq 0$ for $M_a(e_j)=\sum_{k\in I}m_ke_k$. Then $\partial$ is called a \textit{nice} $\partial_1,\ldots,\partial_r$-grading.
\end{dfn}

\begin{ex} In this example we state two extreme cases of gradings.
\begin{itemize}
\item Let $\partial$ and $\partial'$ be gradings such that $\partial'\colon E\to \ZZ$ is an injective map. Then $\partial$ is a nice $\partial'$-grading.

\item Let $\partial$ be a grading such that $\partial(e_i)=\partial(e_j)$ for all $i,j\in I$. Then $\partial$ is a nice grading.
\end{itemize}

\end{ex}

The definition of $\partial_1,\ldots,\partial_r$-nice gradings generalizes the gradings introduced by Irelli \cite{Irelli}. He only considers the nice $\emptyset$-gradings, i.e.\ $r=0$. (We say \textit{nice grading} for short.) Now we can successively apply these gradings.

By the following remark, we describe a way to visualize a nice $\partial_1,\ldots,\partial_r$-grading $\partial$ of some representations of the form $F_\ast(V)$.

\begin{rem}\label{rem-basis-FV} Let $Q$ and $S$ be quivers, $F\colon S\to Q$ a winding, $V$ a $S$-representation with $\dim_{\CC}(V_i)=1$ for all $i\in S_0$ and $V_a\neq0$ for all $a\in S_1$. Let $M=F_\ast(V)$ and $\{f_i\in V_i|i\in S_0\}$ be a basis of $\bigoplus_{i\in S_0} V_i$. Then $E:=\{F_\ast(f_i)|i\in S_0\}$ is a basis of $\bigoplus_{i\in Q_0}M_i$.\begin{itemize}

 \item Now we can illustrate each grading $\partial\colon E\to\ZZ$ of $M$ by a labeling of the quiver $S$. For this we extend $\partial$ to $E\cup S_0$ by $\partial(i)=\partial(F_\ast(f_i))$ for each $i\in S_0$.

 \item For each nice  $\partial_1,\ldots,\partial_r$-grading $\partial$ we can further extend $\partial$ in a meaningful way to $E\cup S_0\cup S_1$ by \begin{align*}\partial(a)=\Delta\Big(\big(\partial_m(s(a))\big)_{1\leq m\leq r},\big(\partial_m(t(a))\big)_{1\leq m\leq r},F_1(a)\Big)\end{align*} for all $a\in S_1$. Then by Equation~\eqref {gl-nice} \begin{align}\label{gl-nice-2}\partial(a)=\partial(t(a))-\partial(s(a))\end{align} holds for all $a\in S_1$.

 \item Let $\partial\colon S_0\cup S_1\to\ZZ$ be a map with the following conditions: 
\begin{itemize}
 \item[(S1)] The Equation~\eqref {gl-nice-2} holds for all $a\in S_1$.
 \item[(S2)] $\partial(a)=\partial(b)$ for all $a,b\in S_1$ with $F_1(a)=F_1(b)$,  $\partial_m(s(a))=\partial_m(s(b))$ and $\partial_m(t(a))=\partial_m(t(b))$ for all $m$.
\end{itemize} Then the map $\partial$ induces a nice $\partial_1,\ldots,\partial_r$-grading $\partial\colon E\to\ZZ$ on $M$.

 \item Let $S$ be a tree and $\partial\colon S_1\to\ZZ$ be a map such that the condition (S2) holds. Then the map $\partial$ induces a nice $\partial_1,\ldots,\partial_r$-grading $\partial\colon E\to\ZZ$ on $M$.

 \item Let $\partial\colon S_1\to\ZZ$. If $S$ is connected, such an induced grading $\partial$ is unique up to shift.
\end{itemize}
\end{rem}

\begin{ex}\label{ex-rem-basis-FV} Let $F_\ast(V)$ be the tree module described by the following picture. \begin{align*}F\colon S=\left(\vcenter{\begin{xy}\SelectTips{cm}{}\xymatrix@-0.8pc{
  1\ar[d]_\alpha&&1'\ar[d]^{\beta'}\\
  2\ar[dr]_\beta&&2'\ar[dl]^{\alpha'}\\
  &3
  }\end{xy}}\right)\to Q=\Big(\vcenter{\begin{xy}\SelectTips{cm}{}\xymatrix@+0pc{
  \circ\ar@(ul,dl)[]_\alpha\ar@(ur,dr)[]^\beta}\end{xy}}\Big) \end{align*} Then $F_\ast(V)$ has a basis $E=\{F_\ast(f_1),F_\ast(f_{1'}),F_\ast(f_2),F_\ast(f_{2'}),F_\ast(f_3)\}$ as above. Let $\partial_1\colon S_1\to\ZZ, \gamma\mapsto 1$ for all $\gamma\in S_1$ and $\partial_1(F_\ast(f_1))=0$. This induces by the previous remark a unique nice grading $\partial_1$ of $F_\ast(V)$. Let $\partial_2\colon S_1\to\ZZ, \beta\mapsto 1,\gamma\mapsto 0$ for all $\beta\neq\gamma\in S_1$ and $\partial_2(F_\ast(f_1))=0$ . This induces a unique nice $\partial_1$-grading $\partial_2$ of $F_\ast(V)$. So for instance $\partial_1(F_\ast(f_3))=2$, $\partial_2(F_\ast(f_3))=1$, $\partial_1(F_\ast(f_{1'}))=0$ and $\partial_2(F_\ast(f_{1'}))=1$.
\end{ex}

 Let $Q$ be a quiver, $M$ a $Q$-representation and $\partial$ a grading. The algebraic group $\CC^\ast$ acts by \begin{align}\varphi_\partial\colon\CC^\ast\to{\End}_{\CC}(M),\ \varphi_\partial(\lambda)(e_j):=\lambda^{\partial\left(e_j\right)}e_j\end{align} on the vector space $M$. This defines in some cases a $\CC^\ast$-action on the quiver Grassmannian $\Gr_{\ud}(M)$.

\subsection{Stable gradings}
 \begin{dfn}\label{dfn-stable} Let $X\subseteq\Gr_{\ud}(M)$ be a locally closed subset and $\partial$ a grading of $M$. If for all $U\in X$ and $\lambda\in\CC^\ast$ the vector space $\varphi_\partial(\lambda)U$ is in $X$, then the grading $\partial$ is called \textit{stable}.\end{dfn}

 Let $X\subseteq\Gr_{\ud}(M)$ be a locally closed subset and $\partial_1,\ldots,\partial_r$ gradings. Let \begin{align}\label{gl-X-par-1r} X^{\partial_1,\ldots,\partial_r}:=\Big\{U\in X\Big|U\text{ has a basis, which is $\partial_i$-homogeneous for each }i\Big\}.\end{align} This equation is a generalization of Equation~\eqref{gl-X-par}. By definition, each stable grading on $X$ is also a stable grading on  $X^{\partial_1,\ldots,\partial_r}$.

\begin{lem}\label{lem-stab-homo}Let $Q$ be a quiver, $M$ a $Q$-representation and $\ud$ a dimension vector.
Let $U\in\Gr_{\ud}(M)$ and $\partial_1,\ldots,\partial_r$ gradings. Then $U\in \Gr_{\ud}(M)^{\partial_1,\ldots,\partial_r}$ if and only if $\varphi_{\partial_i}(\lambda)U=U$ as vector spaces for all $i$ and $\lambda\in\CC^\ast$.
\end{lem}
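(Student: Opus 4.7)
\medskip

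\noindent\textbf{Proof plan.} The idea is that each grading $\partial_i$ encodes a $\CC^\ast$-action whose weight spaces are exactly the $\partial_i$-homogeneous components of $M$, and the condition $\varphi_{\partial_i}(\lambda)U=U$ for all $\lambda$ is the standard stability condition for a linear subspace under a torus action.

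The forward direction is immediate. Assume $U$ admits a basis $u_1,\dots,u_d$ such that each $u_j$ is simultaneously $\partial_i$-homogeneous for every $i$, of some degree $n_{ij}:=\partial_i(u_j)\in\ZZ$. Then for every $\lambda\in\CC^\ast$ and every $i$ one computes $\varphi_{\partial_i}(\lambda)u_j=\lambda^{n_{ij}}u_j\in U$, so $\varphi_{\partial_i}(\lambda)U\subseteq U$. Since $\varphi_{\partial_i}(\lambda)$ is invertible with inverse $\varphi_{\partial_i}(\lambda^{-1})$, equality follows.

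For the converse I would package the $r$ actions into one torus action of $T=(\CC^\ast)^r$ on $M$ via $(\lambda_1,\dots,\lambda_r)\cdot e_j = \bigl(\prod_i\lambda_i^{\partial_i(e_j)}\bigr)e_j$. By construction the operators $\varphi_{\partial_i}(\lambda_i)$ are simultaneously diagonal in the basis $E$, hence pairwise commuting and diagonalizable, and the common weight space decomposition of $M$ is
\begin{equation*}
M=\bigoplus_{\mathbf{n}\in\ZZ^r}M_{\mathbf{n}},\qquad M_{\mathbf{n}}:=\mathrm{span}_\CC\bigl\{e_j\in E\ \bigm|\ \partial_i(e_j)=n_i\ \forall i\bigr\}.
\end{equation*}
The hypothesis that $\varphi_{\partial_i}(\lambda)U=U$ for every $i$ and every $\lambda$ says precisely that $U$ is $T$-stable. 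By the standard fact that a subspace stable under a diagonalizable torus action is the direct sum of its intersections with the weight spaces (one can prove this either by invoking complete reducibility of $T$-representations, or directly: pick $\lambda_1,\dots,\lambda_r\in\CC^\ast$ so that the finitely many scalars $\prod_i\lambda_i^{n_i}$ occurring on $M$ are all distinct, and observe that $U$ is an invariant subspace of the resulting diagonalizable operator, hence splits along its eigenspaces, which are exactly the $M_{\mathbf{n}}$), we obtain
\begin{equation*}
U=\bigoplus_{\mathbf{n}\in\ZZ^r}\bigl(U\cap M_{\mathbf{n}}\bigr).
\end{equation*}
Choosing a $\CC$-basis of each nonzero summand $U\cap M_{\mathbf{n}}$ and taking the union yields a basis of $U$ in which every vector is simultaneously $\partial_i$-homogeneous for all $i$, i.e.\ $U\in\Gr_{\ud}(M)^{\partial_1,\ldots,\partial_r}$.

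There is no real obstacle here; the only point requiring a small argument is the simultaneous diagonalization step for the backward direction, and this is standard linear algebra once one observes that the $\varphi_{\partial_i}(\lambda)$ are simultaneously diagonalized by the fixed basis $E$.
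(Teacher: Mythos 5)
Your proof is correct; the forward direction is word-for-word what the paper does. For the converse, the paper proceeds by induction on the number of gradings: starting from a basis $\{m_1,\ldots,m_t\}$ of $U$ that is $\partial_i$-homogeneous for $i<s$, it decomposes each $m_j$ into its $\partial_s$-homogeneous pieces $m_{j,z}$ and deduces $m_{j,z}\in U$ from the hypothesis that $\varphi_{\partial_s}(\lambda)(m_j)=\sum_z\lambda^z m_{j,z}\in U$ for all $\lambda\in\CC^\ast$ (a Vandermonde argument, since only finitely many $m_{j,z}$ are nonzero), then extracts a new basis from the collection $\{m_{j,z}\}$ and passes to $s+1$. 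You instead bundle all $r$ gradings into a single $T=(\CC^\ast)^r$-action, observe that the $\varphi_{\partial_i}$ are simultaneously diagonal in $E$, and invoke (or directly prove, via a generic $(\lambda_1,\ldots,\lambda_r)$ making the occurring scalars $\prod_i\lambda_i^{n_i}$ pairwise distinct) the fact that a $T$-invariant subspace splits along the common weight spaces. These are the same mechanism packaged differently: each step of the paper's induction is exactly the single-$\CC^\ast$ instance of your ``standard fact,'' so your version is a clean one-shot reformulation while the paper's version is a more elementary hands-on implementation; both buy the same conclusion at comparable cost.
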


\begin{proof}
 If $U\in\Gr_{\ud}(M)$ has a basis, which is $\partial_i$-homogeneous for each $i$, we get $\varphi_{\partial_i}(\lambda)U=U$ for each $i$ and $\lambda\in\CC^\ast$.

 Let $U\in\Gr_{\ud}(M)$ such that $\varphi_{\partial_i}(\lambda)U=U$ for all $i$ and $\lambda\in\CC^\ast$. Our aim is to find a basis for $U$, which is $\partial_i$-homogeneous for each $i$. Let $s\in\NN$ with $1\leq s\leq r$ and $\{m_1,\ldots,m_t\}$ be a basis of $U$, which is $\partial_i$-homogeneous for each $i$ with $1\leq i<s$. For each $j$ with $1\leq j\leq t$ let $m_j=\sum_{i\in I}\lambda_{ij}e_i$ with $\lambda_{ij}\in\CC$. For each $z\in\ZZ$ and $j\in\NN$ with $1\leq j\leq t$ define $m_{j,z}:=\sum_{i\in I,\partial(e_i)=z}\lambda_{ij}e_i\in M$. Then $m_{j,z}$ is $\partial_i$-homogeneous for each $i$ with $1\leq i\leq s$, $\varphi_{\partial_s}(\lambda)(m_{j,z})=\lambda^z m_{j,z}$ for all $\lambda\in\CC^\ast$ and $m_j=\sum_{z\in\ZZ}m_{j,z}$. Then $\varphi_{\partial_s}(\lambda)(m_j)=\sum_{z\in\ZZ}\lambda^z m_{j,z}\in U$ for all $\lambda\in\CC^\ast$ and so $m_{j,z}\in U$ for all $z\in\ZZ$ and all $j$. Since $\{m_{j,z}|1\leq j\leq t,z\in\ZZ\}$ generates $U$, a subset of this set is a basis of the vector space $U$, which is $\partial_i$-homogeneous for each $i$ with $1\leq i\leq s$. The statement follows by an induction argument.
\end{proof}

We will show that all R-gradings and all nice gradings are stable on $\Gr_{\ud}(M)$.

\begin{lem} Let $Q$ be a quiver, $M$ a $Q$-representation and $\ud$ a dimension vector. Then each R-grading $\partial$ is stable on $\Gr_{\ud}(M)$.
\end{lem}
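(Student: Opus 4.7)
The plan is to verify stability by showing that for an R-grading, the $\CC^\ast$-action $\varphi_\partial$ acts through representation automorphisms of $M$, after which stability of $\Gr_{\ud}(M)$ is automatic.

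First I would unpack the definitions. Stability on $\Gr_{\ud}(M)$ requires that for every subrepresentation $U\subseteq M$ with $\udim(U)=\ud$ and every $\lambda\in\CC^\ast$, the image $\varphi_\partial(\lambda)U$ is again a subrepresentation with the same dimension vector. Since $\varphi_\partial(\lambda)$ acts on each basis vector $e_j\in M_i$ by a nonzero scalar, it preserves each $M_i$ setwise and is a linear isomorphism, so $\varphi_\partial(\lambda)U$ automatically has the same dimension vector as $U$. The only nontrivial point is that $\varphi_\partial(\lambda)U$ is closed under all the structure maps $M_a$.

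Next I would exploit the defining feature of an R-grading: $M=\bigoplus_{k=1}^r N_k$ as $Q$-representations, and each basis vector lies in exactly one $N_k$. For an arrow $a\in Q_1$ and a basis vector $e_j\in N_{k,s(a)}$, the fact that $N_k$ is a subrepresentation gives $M_a(e_j)\in N_{k,t(a)}$, which is spanned by those $e_i\in E$ with $\partial(e_i)=k=\partial(e_j)$. Hence $M_a$ sends $\partial$-homogeneous vectors of degree $k$ to $\partial$-homogeneous vectors of the same degree $k$, and consequently $\varphi_\partial(\lambda)\circ M_a=M_a\circ\varphi_\partial(\lambda)$ for every $\lambda\in\CC^\ast$ and every $a\in Q_1$. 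In other words, $\varphi_\partial(\lambda)$ is an endomorphism (indeed an automorphism) of $M$ in $\Rep(Q)$.

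Finally I would conclude: if $U$ is a subrepresentation of $M$, then applying the $Q$-representation automorphism $\varphi_\partial(\lambda)$ yields a subrepresentation $\varphi_\partial(\lambda)U\subseteq M$, whose dimension vector equals $\ud$. Therefore $\varphi_\partial(\lambda)U\in\Gr_{\ud}(M)$, proving that the R-grading $\partial$ is stable. There is really no main obstacle here; the argument reduces to checking that the decomposition $M=\bigoplus N_k$ as representations forces $M_a$ to commute with $\varphi_\partial(\lambda)$, which is immediate from the definitions.
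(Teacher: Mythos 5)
Your proposal is correct and takes the same route as the paper: the paper's proof consists solely of the remark that it suffices to show $M_a$ and $\varphi_\partial(\lambda)$ commute for all $a\in Q_1$ and $\lambda\in\CC^\ast$, which is exactly what you establish by observing that each $N_k$ being a subrepresentation makes $M_a$ degree-preserving. You have simply spelled out the details the paper leaves implicit.
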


For this lemma it is enough to show that $M_a$ and $\varphi_\partial(\lambda)$ commute for all $a\in Q_1$ and $\lambda\in\CC^\ast$.

 \begin{lem}\label{lem-auf-homos-testen} Let $\partial_1,\ldots,\partial_r$ and $\partial$ be gradings of $M$. Then $\partial$  is stable on the variety ${\Gr}_{\ud}(M)^{\partial_1,\ldots,\partial_r}$ for all $\ud\in\NN^{|Q_0|}$ if and only if for all $\lambda\in\CC^\ast$, $a\in Q_1$ and $\partial_1,\ldots,\partial_r$-homogeneous elements $u\in M$ we have \begin{align}\label{gl-auf-homos-testen}M_a\left(\varphi_{\partial}(\lambda)u\right)\in \varphi_{\partial}(\lambda)U_{\partial_1,\ldots,\partial_r}(u), \end{align} where  $U_{\partial_1,\ldots,\partial_r}(u)$ is the minimal subrepresentation of $M$ such that $u\in U_{\partial_1,\ldots,\partial_r}(u)$ and $U_{\partial_1,\ldots,\partial_r}(u)\in{\Gr}_{\ud}(M)^{\partial_1,\ldots,\partial_r}$ for some $\ud\in\NN^{|Q_0|}$.\end{lem}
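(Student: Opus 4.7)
The plan is to prove the two implications separately, after first verifying that the object $U_{\partial_1,\ldots,\partial_r}(u)$ is well-defined. Since $M$ itself has basis $E$, which is $\partial_m$-homogeneous for every $m$, the class of subrepresentations $U'\subseteq M$ containing $u$ with a $\partial_1,\ldots,\partial_r$-homogeneous basis is nonempty. By Lemma~\ref{lem-stab-homo}, these are exactly the subrepresentations $U'$ with $\varphi_{\partial_i}(\mu)U'=U'$ for all $i$ and $\mu\in\CC^\ast$, and this invariance is preserved under finite intersections. Hence the minimal such subrepresentation exists and is the intersection of all of them; I take this as $U_{\partial_1,\ldots,\partial_r}(u)$.

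For the \emph{only if} direction, assume $\partial$ is stable on $\Gr_{\ud}(M)^{\partial_1,\ldots,\partial_r}$ for every $\ud$. Fix $\lambda\in\CC^\ast$, $a\in Q_1$ and a $\partial_1,\ldots,\partial_r$-homogeneous $u\in M$. Setting $U:=U_{\partial_1,\ldots,\partial_r}(u)$, stability yields that $\varphi_\partial(\lambda)U$ still lies in the homogeneous variety, hence is a subrepresentation of $M$. Since $\varphi_\partial(\lambda)u\in\varphi_\partial(\lambda)U$, applying $M_a$ gives $M_a(\varphi_\partial(\lambda)u)\in\varphi_\partial(\lambda)U=\varphi_\partial(\lambda)U_{\partial_1,\ldots,\partial_r}(u)$.

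For the \emph{if} direction, assume the condition and fix $U\in\Gr_{\ud}(M)^{\partial_1,\ldots,\partial_r}$, $\lambda\in\CC^\ast$. I need to show $\varphi_\partial(\lambda)U$ again lies in this variety. Dimensions are preserved automatically, so two points remain: (i) $\varphi_\partial(\lambda)U$ is a subrepresentation, and (ii) it admits a $\partial_1,\ldots,\partial_r$-homogeneous basis. For (i) choose a $\partial_1,\ldots,\partial_r$-homogeneous basis $\{u_1,\ldots,u_t\}$ of $U$, so that $\{\varphi_\partial(\lambda)u_i\}$ spans $\varphi_\partial(\lambda)U$. The hypothesis gives $M_a(\varphi_\partial(\lambda)u_i)\in\varphi_\partial(\lambda)U_{\partial_1,\ldots,\partial_r}(u_i)$, and minimality of $U_{\partial_1,\ldots,\partial_r}(u_i)$ together with $u_i\in U$ forces $U_{\partial_1,\ldots,\partial_r}(u_i)\subseteq U$, hence $M_a(\varphi_\partial(\lambda)u_i)\in\varphi_\partial(\lambda)U$ for every $a$ and $i$. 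For (ii) I use that $\varphi_\partial$ and each $\varphi_{\partial_i}$ are diagonal with respect to $E$ and therefore commute; combined with $\varphi_{\partial_i}(\mu)U=U$ from Lemma~\ref{lem-stab-homo}, this yields $\varphi_{\partial_i}(\mu)(\varphi_\partial(\lambda)U)=\varphi_\partial(\lambda)U$, and the converse of Lemma~\ref{lem-stab-homo} produces the required homogeneous basis.

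The only delicate point is the simultaneous bookkeeping of the grading $\partial$, whose stability is being tested, against the auxiliary gradings $\partial_1,\ldots,\partial_r$ cutting out the ambient variety. The crucial mechanism making the argument go through is that all operators $\varphi_{\partial}(\lambda),\varphi_{\partial_1}(\mu_1),\ldots,\varphi_{\partial_r}(\mu_r)$ are diagonal in the common basis $E$, so they commute freely; once this is noted the rest reduces to invoking Lemma~\ref{lem-stab-homo} and the minimality of $U_{\partial_1,\ldots,\partial_r}(u)$ in the appropriate places, with no computation required beyond that.
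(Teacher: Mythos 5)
Your proof is correct and follows the same approach as the paper's. One small point in your favor: for the \emph{if} direction the paper's proof only verifies that $\varphi_\partial(\lambda)U$ is again a subrepresentation (i.e.\ lies in $\Gr_{\ud}(M)$), leaving implicit the check that it still lies in $\Gr_{\ud}(M)^{\partial_1,\ldots,\partial_r}$; you make this explicit via the observation that $\varphi_\partial(\lambda)$ and the $\varphi_{\partial_i}(\mu)$ are simultaneously diagonal in $E$ and hence commute, so Lemma~\ref{lem-stab-homo} gives the required $\partial_1,\ldots,\partial_r$-homogeneous basis for $\varphi_\partial(\lambda)U$. Your remark about well-definedness of $U_{\partial_1,\ldots,\partial_r}(u)$ mirrors the comment the paper places right after the lemma statement.
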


If $U\in\Gr_{\ud}(M)^{\partial_1,\ldots,\partial_r}$ and $V\in\Gr_{\uc}(M)^{\partial_1,\ldots,\partial_r}$, then Lemma~\ref{lem-stab-homo} implies $U\cap V\in\Gr_{\udim(U\cap V)}(M)^{\partial_1,\ldots,\partial_r}$. So the submodule $U_{\partial_1,\ldots,\partial_r}(u)$ is well-defined and unique.

 \begin{proof} If $\partial$ is stable on ${\Gr}_{\ud}(M)^{\partial_1,\ldots,\partial_r}$ for all $\ud\in\NN^{|Q_0|}$, then $\varphi_{\partial}(\lambda)U_{\partial_1,\ldots,\partial_r}(u)$ is a subrepresentation of $M$ for all $\lambda\in\CC^\ast$ and $u\in M$.

 Let $U\in{\Gr}_{\ud}(M)^{\partial_1,\ldots,\partial_r}$. If Equation~\eqref{gl-auf-homos-testen} holds for all $\lambda\in\CC^\ast$, $a\in Q_1$ and $\partial_1,\ldots,\partial_r$-homogeneous $u\in M$, then  $M_a\left(\varphi_{\partial}(\lambda)U_{s(a)}\right)\subseteq\varphi_{\partial}(\lambda)U_{t(a)}$ for all $\lambda\in\CC^\ast$ and $a\in Q_1$, since $U$ is generated by $\partial_1,\ldots,\partial_r$-homogeneous elements. Thus $\varphi_{\partial}(\lambda)U\in{\Gr}_{\ud}(M)$.
\end{proof}

\begin{lem}\label{lem-nice-Grad-stabil} Let $Q$ be a quiver, $M$ a $Q$-representation and $\ud$ a dimension vector. Then every nice $\partial_1,\ldots,\partial_r$-grading $\partial$ is stable on $\Gr_{\ud}(M)^{\partial_1,\ldots,\partial_r}$.
\end{lem}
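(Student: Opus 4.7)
My plan is to invoke the criterion of Lemma~\ref{lem-auf-homos-testen}: I only need to show that for every arrow $a\in Q_1$, every $\lambda\in\CC^\ast$, and every $(\partial_1,\ldots,\partial_r)$-homogeneous element $u\in M_{s(a)}$, the vector $M_a(\varphi_\partial(\lambda)u)$ lies in $\varphi_\partial(\lambda)U_{\partial_1,\ldots,\partial_r}(u)$. The reduction to homogeneous $u$ is important because the key input, namely the niceness Equation~\eqref{gl-nice}, requires a single common tuple $\mathbf{y}=(\partial_m(u))_{1\le m\le r}$ on the source side.

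First I would fix such a $u$ and expand it as $u=\sum_{j\in J}\mu_j e_j$ with $(\partial_m(e_j))_m=\mathbf{y}$ for every $j\in J$, and write out $M_a(e_j)=\sum_k m_{jk} e_k$. For any pair $(j,k)$ with $m_{jk}\neq 0$, niceness gives $\partial(e_k)-\partial(e_j)=\Delta(\mathbf{y},(\partial_m(e_k))_m,a)$; the crucial point is that the right-hand side depends on $k$ only through its joint class $\mathbf{z}:=(\partial_m(e_k))_m$. This decouples the $\lambda$-exponent of $\partial$ from the chosen $j$.

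Next I would group the expansion of $M_a(\varphi_\partial(\lambda)u)$ by $\mathbf{z}$. Letting $v_\mathbf{z}$ denote the $(\partial_1,\ldots,\partial_r)$-joint homogeneous component of type $\mathbf{z}$ of $M_a(u)$, substituting $\lambda^{\partial(e_j)}=\lambda^{-\Delta(\mathbf{y},\mathbf{z},a)}\lambda^{\partial(e_k)}$ into the expansion yields
\[
M_a(\varphi_\partial(\lambda)u)=\varphi_\partial(\lambda)\Bigl(\sum_{\mathbf{z}}\lambda^{-\Delta(\mathbf{y},\mathbf{z},a)}v_\mathbf{z}\Bigr).
\]
It remains to argue that the bracketed vector lies in $U_{\partial_1,\ldots,\partial_r}(u)$. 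Since $M_a(u)\in U_{\partial_1,\ldots,\partial_r}(u)$ (it is a subrepresentation containing $u$) and $U_{\partial_1,\ldots,\partial_r}(u)\in\Gr^{\partial_1,\ldots,\partial_r}$, Lemma~\ref{lem-stab-homo} says this subrepresentation is invariant under each $\varphi_{\partial_i}$; a standard Vandermonde-type argument (entirely analogous to the one already used inside the proof of Lemma~\ref{lem-stab-homo}) then shows that each joint-homogeneous component $v_\mathbf{z}$ of $M_a(u)$ still lies in $U_{\partial_1,\ldots,\partial_r}(u)$.

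The main obstacle is simply the simultaneous bookkeeping of two different gradings inside one expansion — the fixed tuple $(\partial_1,\ldots,\partial_r)$ that pins down the subrepresentation $U_{\partial_1,\ldots,\partial_r}(u)$, and the grading $\partial$ responsible for the $\CC^\ast$-action. The niceness hypothesis is precisely the device that decouples them through the function $\Delta$; once this is observed, the verification of Lemma~\ref{lem-auf-homos-testen} becomes a direct manipulation with no further nontrivial ingredient.
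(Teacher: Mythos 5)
Your proposal is correct and follows essentially the same route as the paper's proof: reduce via Lemma~\ref{lem-auf-homos-testen} to $\partial_1,\ldots,\partial_r$-homogeneous $u$, expand $M_a(\varphi_\partial(\lambda)u)$ in the basis $E$, use the niceness identity~\eqref{gl-nice} to pull $\varphi_\partial(\lambda)$ out front with $\lambda$-exponents depending only on the joint class $\mathbf{z}$, and observe that the resulting bracket is a $\CC$-linear combination of the joint-homogeneous components of $M_a(u)$, which lie in $U_{\partial_1,\ldots,\partial_r}(u)$ by its $\varphi_{\partial_i}$-invariance. You make the final membership step slightly more explicit than the paper (which leaves the invariance argument implicit), but the mechanism is identical.
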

 \begin{proof} By Lemma~\ref{lem-auf-homos-testen}, it is enough to consider $\lambda\in\CC^\ast$, $a\in Q_1$ and a homogeneous $u\in M$. We can write $u=\sum_{k\in I} u_ke_k$ with $u_k\in\CC$, $M_a(e_k)=\sum_{j\in I}m_{jk}e_j$ with $m_{jk}\in\CC$ for all $k\in I$ and $M_a(u)=\sum_{\mathbf z\in\ZZ^r}m_{\mathbf z}$ with $(\partial_m(m_{\mathbf z}))_m=\mathbf z$. So $m_{\mathbf z}=\sum_{k,j\in I,(\partial_m(e_j))_m=\mathbf z}u_km_{jk}e_j $ and \begin{align*}
M_a\left(\varphi_{\partial}(\lambda)u\right)&
 ={\sum}_{k\in I}u_kM_a\left(\lambda^{{\partial}\left(e_k\right)}e_k\right)
 ={\sum}_{k,j\in I}u_k\lambda^{{\partial}\left(e_k\right)}m_{jk}e_{j}\displaybreak[0]\\&
 ={\sum}_{k,j\in I}\lambda^{\partial\left(e_k\right)-{\partial}\left(e_j\right)}u_km_{jk}\varphi_\partial(\lambda)e_{j}\displaybreak[0]\\&
 =\varphi_\partial(\lambda)\left({\sum}_{k,j\in I}\lambda^{\Delta((\partial_m(u))_m,(\partial_m(e_j))_m,a)}u_km_{jk}e_{j}\right)\displaybreak[0]\\& 
 =\varphi_\partial(\lambda)\left({\sum}_{\mathbf z\in\ZZ^r}\lambda^{\Delta((\partial_m(u))_m,\mathbf z,a)}m_{\mathbf z}\right)\in\varphi_\partial(\lambda)U_{\partial_1,\ldots,\partial_r}(u).\end{align*}
\end{proof}

\subsection{Ringel-Hall algebras}
 In the theory of Ringel-Hall algebras one has to compute the Euler characteristic of the following locally closed subsets of the projective variety ${\Gr}_{\ud}(M)$. Let $U$ and $M$ be $Q$-representations and $X\subseteq{\Gr}_{\ud}(M)$ a locally closed subset. Let \begin{align*}X^U:=\Big\{V\in X\Big|V\cong U,M/V\cong M/U\Big\}.\end{align*}

\begin{lem}  Let $Q$ be a quiver, $U$ and $M$ be $Q$-representations.  Then every R-grading $\partial$ is stable on ${\Gr}_{\ud}(M)^U$.
\end{lem}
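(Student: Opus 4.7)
The plan is to observe that an R-grading is special: it is induced by a decomposition $M = \bigoplus_{k=1}^r N_k$ into subrepresentations, so the induced $\CC^\ast$-action $\varphi_\partial$ is \emph{itself an action by automorphisms of the representation $M$}, not just of the underlying vector space. Once this is in hand, preservation of the isomorphism type of $V$ and of $M/V$ is automatic.

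First I would unpack the definition: since $E\cap N_k$ is a vector-space basis of $N_k$ (as $E\subseteq\bigcup N_k$ and $M=\bigoplus N_k$), the operator $\varphi_\partial(\lambda)$ acts on $N_k$ as the scalar $\lambda^k$. In particular $\varphi_\partial(\lambda)|_{N_k}=\lambda^k\cdot\id_{N_k}$ is an automorphism of $N_k$ as a $Q$-representation, so $\varphi_\partial(\lambda)=\bigoplus_k\lambda^k\id_{N_k}\in\mathop{\rm Aut}_{\Rep(Q)}(M)$. (This is exactly the content of the preceding lemma that R-gradings are stable on $\Gr_{\ud}(M)$, rephrased.)

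Next, for any $V\in{\Gr}_{\ud}(M)^U$, I would use that an automorphism of representations carries subrepresentations to subrepresentations of the same dimension vector, so $\varphi_\partial(\lambda)V\in{\Gr}_{\ud}(M)$. Moreover the restriction $\varphi_\partial(\lambda)|_V\colon V\to \varphi_\partial(\lambda)V$ is an isomorphism of $Q$-representations, giving $\varphi_\partial(\lambda)V\cong V\cong U$. Passing to quotients, the automorphism $\varphi_\partial(\lambda)$ of $M$ sends $V$ onto $\varphi_\partial(\lambda)V$, hence descends to an isomorphism $M/V\stackrel{\sim}{\to} M/\varphi_\partial(\lambda)V$ of representations. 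Combined with $M/V\cong M/U$, this yields $M/\varphi_\partial(\lambda)V\cong M/U$, so $\varphi_\partial(\lambda)V\in{\Gr}_{\ud}(M)^U$ as required.

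There is essentially no technical obstacle here; the only subtle point is justifying that $E\cap N_k$ is a basis of $N_k$, which follows from the hypotheses $E\subseteq\bigcup N_k$ and $M=\bigoplus N_k$ together with the fact that $E$ is a basis of $M$. The core conceptual point, which makes the lemma almost immediate compared with the stability of general nice gradings, is that an R-grading encodes a one-parameter subgroup of $\mathop{\rm Aut}_{\Rep(Q)}(M)$ rather than merely of $\mathop{\rm End}_\CC(M)$, so preservation of both $V$ and $M/V$ up to isomorphism is guaranteed.
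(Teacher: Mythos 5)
Your proof is correct and is exactly the paper's argument, merely unpacked: the paper's proof is the single sentence that $\varphi_\partial(\lambda)$ is an automorphism of $M$ as a $Q$-representation, and your observation that $\varphi_\partial(\lambda)=\bigoplus_k\lambda^k\id_{N_k}$ and the ensuing preservation of the isomorphism types of $V$ and $M/V$ is precisely the verification of that sentence.
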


\begin{proof} The linear map $\varphi_\partial(\lambda)\colon M\to M$ is an automorphism of $Q$-representations for all $\lambda\in\CC^\ast$.
 \end{proof}

 \begin{lem}\label{lem-stab-tree} Let $Q$ be a quiver, $M$ a $Q$-representation and $F_\ast(V)\subseteq M$ with $F\colon S\to Q$ a tree. Let $\partial$ be a nice grading on $\Gr_{\ud}(M)$ such that $M/F_\ast(V)$ is a tree module. Then $\partial$ is also stable on $\Gr_{\ud}(M)^{F_\ast(V)}$.
\end{lem}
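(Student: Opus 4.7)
The plan is to verify directly, for each $U\in\Gr_{\ud}(M)^{F_\ast(V)}$ and each $\lambda\in\CC^\ast$, that both $\varphi_\partial(\lambda)U\cong F_\ast(V)$ and $M/\varphi_\partial(\lambda)U\cong M/F_\ast(V)$. By Lemma~\ref{lem-nice-Grad-stabil} (applied with $r=0$) we already know that $\varphi_\partial(\lambda)U$ is a subrepresentation of $M$, so only the two isomorphism classes remain to be identified.

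The only property of $\partial$ I will need beyond stability is the following key identity: unpacking niceness with $r=0$ gives $\partial(e_k)-\partial(e_j)=\Delta(a)$ whenever $e_j\in M_{s(a)}$, $e_k\in M_{t(a)}$ and the $e_k$-coefficient of $M_a(e_j)$ is non-zero. The linearity calculation already used in the proof of Lemma~\ref{lem-nice-Grad-stabil} then yields
\[M_a\bigl(\varphi_\partial(\lambda)u\bigr)=\lambda^{-\Delta(a)}\,\varphi_\partial(\lambda)M_a(u)\]
for every $a\in Q_1$ and $u\in M_{s(a)}$.

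To handle the subrepresentation, I would exploit that $U\cong F_\ast(V)$ is a tree module: choose a tree basis $(u_i)_{i\in S_0}$ of $U$ with $u_i\in M_{F_0(i)}$ and $M_{F_1(a)}(u_{s'(a)})=u_{t'(a)}$ for all $a\in S_1$ (possible after rescaling because $V_a\neq 0$ and $\dim V_i=1$). Since $\varphi_\partial(\lambda)$ preserves each $M_i$, the vectors $(\varphi_\partial(\lambda)u_i)$ form a basis of $\varphi_\partial(\lambda)U$ living in the correct vertex components, and the key identity gives $M_{F_1(a)}(\varphi_\partial(\lambda)u_{s'(a)})=\lambda^{-\Delta(F_1(a))}\varphi_\partial(\lambda)u_{t'(a)}$. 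This realises $\varphi_\partial(\lambda)U$ as a tree module with the same winding $F$, differing from $F_\ast(V)$ only by non-zero rescalings on each arrow; because $S$ is a tree such rescalings do not change the isomorphism class, so $\varphi_\partial(\lambda)U\cong F_\ast(V)$.

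For the quotient I would run the analogous argument, which I expect to be the only real subtlety. Write $M/F_\ast(V)\cong G_\ast(W)$ with $G\colon T\to Q$ a tree, fix a tree basis $(\bar w_j)_{j\in T_0}$ of $M/U$, and lift it to vectors $w_j\in M_{G_0(j)}$ (choosing each lift inside the correct vertex component, so that $\varphi_\partial$ still acts diagonally on it). Then $M_{G_1(b)}(w_{s(b)})-w_{t(b)}\in U$ for every $b\in T_1$, and applying the key identity together with $\varphi_\partial(\lambda)U$ being a subrepresentation yields
\[M_{G_1(b)}\bigl(\varphi_\partial(\lambda)w_{s(b)}\bigr)\;\equiv\;\lambda^{-\Delta(G_1(b))}\,\varphi_\partial(\lambda)w_{t(b)}\pmod{\varphi_\partial(\lambda)U},\]
so $(\overline{\varphi_\partial(\lambda)w_j})$ is, up to rescaling, a tree basis of $M/\varphi_\partial(\lambda)U$ with winding $G$, and hence $M/\varphi_\partial(\lambda)U\cong G_\ast(W)\cong M/F_\ast(V)$. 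Combining the two results gives $\varphi_\partial(\lambda)U\in\Gr_{\ud}(M)^{F_\ast(V)}$, proving stability; the main place where care is needed is precisely the quotient step, where one has to track the defects in $U$ through the $\CC^\ast$-action rather than hoping they vanish outright.
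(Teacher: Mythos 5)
Your proof is correct and takes essentially the same route as the paper's: both hinge on the commutation relation $M_a\varphi_\partial(\lambda)=\lambda^{\pm\Delta(a)}\varphi_\partial(\lambda)M_a$ coming from niceness with $r=0$, and on the fact that for a tree $S$ nonzero arrow-wise rescalings are gauge-trivial (the paper's $\partial(\rho_j)$ along the unique paths is exactly the gauge transformation that you invoke implicitly). You spell out in welcome detail the quotient step that the paper dispatches with ``the same holds for the quotient''; one tiny wording infelicity is the phrase about $\varphi_\partial$ acting ``diagonally'' on the lifts $w_j$, which it need not (the lifts need not be $\partial$-homogeneous), but your argument never actually uses diagonality --- only that $\varphi_\partial(\lambda)$ preserves each vertex component $M_{G_0(j)}$ --- so this is harmless.
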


\begin{proof}
 Let $a\in Q_1$, $\lambda\in\CC^\ast$ and $U\in\Gr_{\ud}(M)^{F_\ast(V)}$. Since $\partial$ is a nice grading we know $M_a\varphi_\partial(\lambda)=\lambda^{\partial(a)}\varphi_\partial(\lambda)M_a$ by the proof of Lemma~\ref{lem-nice-Grad-stabil}. Let $i\in S_0$ and $\rho_j$ the unique not necessarily oriented path in $S$ from $i$ to some $j\in S_0$. Then we can associate an integer $\partial(\rho_j)$ to each path $\rho_j$ such that $f_j\mapsto \lambda^{\partial(\rho_j)}f_j$ induces an isomorphism $U\to\varphi_\partial(\lambda)(U)$ of quiver representations. The same holds for the quotient.
\end{proof}

\begin{lem}\label{lem-stab-band}Let $Q$ and $S$ be quivers, $B\colon S\to Q$ a winding, $M$ a $Q$-representation and
$\partial$ a nice grading on $\Gr_{\ud}(M)$. Let \begin{align*}X=\Big\{U\in{\Gr}_{\ud}(M)\Big|\exists B_\ast(V)\text{ band module},U\cong B_\ast(V)\Big\},\end{align*} a locally closed subset of $\Gr_{\ud}(M)$. Then $\partial$ is also stable on $X$.
\end{lem}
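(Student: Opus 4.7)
The plan is to produce, for any $U\in X$ and $\lambda\in\CC^\ast$, an explicit $S$-representation $V'$ such that $B_\ast(V')$ is a band module isomorphic to $\varphi_\partial(\lambda)U$. This immediately gives $\varphi_\partial(\lambda)U\in X$ and hence stability of $\partial$ on $X$.

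As a starting point, since $\partial$ is a nice grading (the $r=0$ case), Lemma~\ref{lem-nice-Grad-stabil} already tells us that $\varphi_\partial(\lambda)U$ is a subrepresentation of $M$ with the same dimension vector as $U$. Extracting from the computation in the proof of that lemma, for each arrow $a\in Q_1$ there is an integer $\partial(a)\in\ZZ$ satisfying the intertwining relation
\begin{equation*}
M_a\circ\varphi_\partial(\lambda)=\lambda^{-\partial(a)}\,\varphi_\partial(\lambda)\circ M_a
\end{equation*}
on all of $M$.

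Next I would fix an isomorphism identifying $U$ with $B_\ast(V)$ and define a new $S$-representation $V'$ by $V'_i:=V_i$ and $V'_b:=\lambda^{-\partial(B_1(b))}V_b$ for each $b\in S_1$. Each $V'_b$ is still an isomorphism since $\lambda\in\CC^\ast$, and the $Q$-representation $B_\ast(V')$ has the same underlying vector spaces as $B_\ast(V)=U$, but with structure map on the $a$-component rescaled by $\lambda^{-\partial(a)}$. A direct check using the intertwining relation above shows that the restriction $\varphi_\partial(\lambda)|_U$ is a $\CC$-linear bijection $B_\ast(V')\to\varphi_\partial(\lambda)U$ that commutes with the $Q$-structure maps, hence is an isomorphism of $Q$-representations.

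The main point to verify is that $B_\ast(V')$ is indecomposable, which is the remaining condition of Definition~\ref{dfn-band-mod}. The cleanest way is to argue by pullback rather than by tracking monodromies of the cycle $S$: any decomposition $\varphi_\partial(\lambda)U=N_1\oplus N_2$ into nonzero subrepresentations of $M$ would, under the bijection $\varphi_\partial(\lambda^{-1})$ (which sends subrepresentations of $M$ to subrepresentations by the stability part of Lemma~\ref{lem-nice-Grad-stabil} applied to $\partial$), transport to a corresponding decomposition of $U\cong B_\ast(V)$, contradicting the indecomposability of the band module $B_\ast(V)$. This is the only place where the band hypothesis on $U$ really enters, and it is the step I expect to be the main obstacle to formulate carefully, since one must check that both pieces of the pulled-back decomposition are again subrepresentations and that they direct-sum to $U$.
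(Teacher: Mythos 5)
Your proof is correct and takes essentially the same route as the paper's, which is to argue that $\varphi_\partial(\lambda)U$ is itself isomorphic to a band module $B_\ast(V')$ for the same winding $B$ (rather than isomorphic to $U$, as in the tree case). The paper's version of this argument is very terse: it invokes the method of the proof of Lemma~\ref{lem-stab-tree} (the map $f_j\mapsto\lambda^{\partial(\rho_j)}f_j$ along paths in $S$), observes that it fails to produce an isomorphism $U\cong\varphi_\partial(\lambda)U$ because of the monodromy around the cycle of $S$, and simply asserts that $U$ and $\varphi_\partial(\lambda)U$ are ``both band modules for the same quiver $S$ and the same winding $B$.'' You make this explicit in two ways that genuinely fill gaps: you construct $V'$ concretely by rescaling each $V_b$ by $\lambda^{-\partial(B_1(b))}$, which is exactly the rescaling dictated by the intertwining relation for a nice grading, and you verify the indecomposability requirement in Definition~\ref{dfn-band-mod} via the clean transport-of-decompositions argument through $\varphi_\partial(\lambda^{-1})$ (which the paper leaves implicit). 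One incidental point in your favor: your sign in $M_a\varphi_\partial(\lambda)=\lambda^{-\partial(a)}\varphi_\partial(\lambda)M_a$ is the one consistent with Equation~\eqref{gl-nice-2}, whereas the statement as written in the proof of Lemma~\ref{lem-stab-tree} has the opposite sign, apparently a typo; this does not affect either argument's substance.
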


\begin{proof}
 We can use the proof of Lemma~\ref{lem-stab-tree}. In this case the representations $U$ and $\varphi_\partial(\lambda)(U)$ are in general non-isomorphic. But they are both band modules for the same quiver $S$ and the same winding $B\colon S\to Q$.
\end{proof}

The next example shows that this lemma is not true if we restrict the action to one orbit of a band module.
\begin{ex}
 Let $F_\ast(V)$ be the tree module described by the following picture.
\begin{align*}F\colon\left(\vcenter{\begin{xy}\SelectTips{cm}{}\xymatrix@-1pc{
  1\ar[dr]_\alpha&&1'\ar[dl]^\beta\\
  &2
  }\end{xy}}\right)\to
\left(\vcenter{\begin{xy}\SelectTips{cm}{}\xymatrix@-1pc{
  1\ar@<-0.5ex>[d]_\alpha\ar@<0.5ex>[d]^\beta\\
  2
  }\end{xy}}\right)\end{align*} Let $U$ be the subrepresentation of $F_\ast(V)$ generated by $F_\ast(f_1+ f_{1'})$. Let $\lambda\in\CC^\ast$ with $\lambda\neq1$ and $\partial$ a nice grading of  $F_\ast(V)$  with $\partial(\alpha)=1$ and $\partial(\beta)=0$ (see Remark~\ref{rem-basis-FV}). Then $\varphi_\partial(\lambda)U$ is generated by $F_\ast(f_1+\lambda f_{1'})$, and $U$ and $\varphi_\partial(\lambda)U$ are non-isomorphic band modules.
\end{ex}

\section{Quiver Grassmannians}\label{thm-Quiv}

\subsection{Proof of Theorem~\ref{thm-Grad}}\label{sec-thm-Grad}
The following proposition is well known.

 \begin{prop}[Bialynicki-Birula \cite{BilaBiru}]\label{prop-Fixpkt} Let $\CC^\ast$ act on a locally closed subset $X$ of a projective variety $Y$. Then the subset of fixed points $X^{\CC^\ast}$ under this action is a locally closed subset of $Y$ and the Euler characteristic of $X$ equals the Euler characteristic of $X^{\CC^\ast}$. If the subset $X$ is non-empty and closed in $Y$, then $X^{\CC^\ast}$ is also non-empty and closed in $Y$.
 \end{prop}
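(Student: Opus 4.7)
The plan is to break the statement into three pieces: $X^{\CC^\ast}$ is locally closed in $Y$; if $X$ is non-empty and closed in $Y$ then so is $X^{\CC^\ast}$; and $\chi(X)=\chi(X^{\CC^\ast})$.

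First I would show that $X^{\CC^\ast}$ is closed in $X$. For each $\lambda\in\CC^\ast$ the action map $\varphi_\lambda\colon X\to X$ is a morphism of varieties, so its fixed locus (the preimage of the diagonal under $(\id_X,\varphi_\lambda)\colon X\to X\times X$) is closed in $X$; intersecting over $\lambda$ (in practice a sufficiently generic single $\lambda$, or a dense finitely generated subgroup, already suffices) shows $X^{\CC^\ast}$ is closed in $X$, hence locally closed in $Y$. When $X$ is closed in $Y$, closedness of $X^{\CC^\ast}$ in $X$ transfers to $Y$. For non-emptiness in that case, $X$ is a non-empty projective variety (closed in the projective $Y$) and $\CC^\ast$ is a connected solvable linear algebraic group, so the Borel fixed point theorem guarantees a fixed point.

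The main part, and also the main obstacle, is the Euler characteristic equality. My approach is the additivity decomposition $\chi(X)=\chi(X^{\CC^\ast})+\chi(X\setminus X^{\CC^\ast})$ together with the claim that the complement contributes zero. On $X\setminus X^{\CC^\ast}$ every $\CC^\ast$-orbit is positive-dimensional, and since a closed subgroup of $\CC^\ast$ is either trivial, a finite cyclic $\mu_n$, or all of $\CC^\ast$, each such orbit is isomorphic to $\CC^\ast$ or $\CC^\ast/\mu_n$, both of which have Euler characteristic zero. Using Noetherian induction together with an equivariant slice argument, one stratifies $X\setminus X^{\CC^\ast}$ into finitely many locally closed subsets on which the quotient map is a locally trivial fiber bundle with fiber $\CC^\ast/\mu_n$; the multiplicativity of $\chi$ in such bundles forces each stratum to have $\chi=0$, and additivity over locally closed partitions finishes the job.

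The hard part is executing the stratification cleanly without any smoothness or properness hypothesis on $X$ itself; the slice theorem in its clean form typically wants more regularity than is given. A more geometric alternative, when the action extends to a smooth $\CC^\ast$-equivariant compactification of $X$, is the classical Bialynicki-Birula decomposition into attracting cells that are affine-space bundles over the connected components of the fixed locus; additivity of $\chi$ on the compactification and on the closed complement of $X$ then transfers the equality $\chi=\chi^{\CC^\ast}$ to $X$.
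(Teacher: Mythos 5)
Your main route matches the paper's: decompose $X$ into $X^{\CC^\ast}$ and its complement $U$, use additivity of $\chi$, and kill $\chi(U)$ by exploiting the free-up-to-finite $\CC^\ast$-action on $U$ together with $\chi(\CC^\ast)=0$; the non-emptiness claim is handled the same way, by the Borel fixed point theorem. The paper is if anything terser than you are about the key step — it simply asserts that $U\to U/\CC^\ast$ is an algebraic morphism and concludes — so the technical difficulty you flag (making the orbit-space/stratification argument precise without smoothness hypotheses) is one the paper glosses over rather than resolves; the standard fix is exactly the kind of equivariant stratification or equivariant projective embedding you sketch, and your Bialynicki--Birula compactification variant is a legitimate alternative the paper does not take.
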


\begin{proof} The subset of fixed points $X^{\CC^\ast}$ is closed in $X$. By \cite{Borel}, this is non-empty if $X$ is non-empty and closed in $Y$.

So we can decompose $X$ into the locally closed subset of fixed points $X^{\CC^\ast}$ and its complement $U=X-X^{\CC^\ast}$ in $X$. So $\chi(X)=\chi\left(X^{\CC^\ast}\right)+\chi(U)$. Since $U$ is the union of the non trivial orbits in $X$, the projection $U\to U/\CC^\ast$ is a algebraic morphism. Since $\chi(\CC^\ast)=0$ the Euler characteristic of $U$ is also zero.
\end{proof}

The action $\varphi_\partial$ of the algebraic group $\CC^\ast$ on the projective variety $X$ is well-defined. Thus Proposition~\ref{prop-Fixpkt} yields the equality of the Euler characteristic of $X$ and the Euler characteristic of the set of fixed points under this action. By Lemma~\ref{lem-stab-homo}, a subrepresentation $U$ of $M$ in $X$ is a fixed point of $\varphi_\partial$ if and only if $U$ has a basis of $\partial$-homogeneous elements. This proves Theorem~\ref{thm-Grad}.$\hfill \Box$

\begin{cor} Let $Q$ be a quiver, $M$ a $Q$-representation and $\partial_1,\ldots,\partial_r$ gradings of $M$ such that for all $1\leq i\leq r$ the grading $\partial_i$ is a stable grading on ${\Gr}_{\ud}(M)^{\partial_1,\ldots,\partial_{i-1}}$. Then the $\chi_{\ud}\left(M\right)=\chi\left({\Gr}_{\ud}(M)^{\partial_1,\ldots,\partial_r}\right)$.
\end{cor}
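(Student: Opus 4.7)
The plan is to prove the corollary by straightforward induction on $r$, using Theorem~\ref{thm-Grad} as the single-step engine and Lemma~\ref{lem-stab-homo} to reconcile the two descriptions of $\Gr_{\ud}(M)^{\partial_1,\ldots,\partial_r}$. The base case $r=0$ is trivial since $\Gr_{\ud}(M)^{\emptyset}=\Gr_{\ud}(M)$ by convention.

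For the inductive step, I would set $X_{i}:=\Gr_{\ud}(M)^{\partial_1,\ldots,\partial_{i}}$ and $X_0:=\Gr_{\ud}(M)$. The inductive hypothesis provides that $X_{r-1}$ is a locally closed subset of $\Gr_{\ud}(M)$ and satisfies $\chi(X_{r-1})=\chi_{\ud}(M)$. By assumption $\partial_r$ is a stable grading on $X_{r-1}$, so Theorem~\ref{thm-Grad} applies to the pair $(X_{r-1},\partial_r)$: the set
\begin{align*}
(X_{r-1})^{\partial_r}=\bigl\{U\in X_{r-1}\bigm| U\text{ has a }\partial_r\text{-homogeneous basis}\bigr\}
\end{align*}
is locally closed in $\Gr_{\ud}(M)$ and $\chi(X_{r-1})=\chi\bigl((X_{r-1})^{\partial_r}\bigr)$.

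The one point that needs justification is the identification $(X_{r-1})^{\partial_r}=X_r$. Here I would invoke Lemma~\ref{lem-stab-homo}: a subrepresentation $U\in\Gr_{\ud}(M)$ lies in $X_i$ exactly when $\varphi_{\partial_j}(\lambda)U=U$ for all $1\le j\le i$ and all $\lambda\in\CC^\ast$. If $U\in (X_{r-1})^{\partial_r}$, then $U\in X_{r-1}$ gives the torus-invariance under $\varphi_{\partial_1},\ldots,\varphi_{\partial_{r-1}}$, and existence of a $\partial_r$-homogeneous basis adds $\varphi_{\partial_r}(\lambda)U=U$; combined, Lemma~\ref{lem-stab-homo} yields $U\in X_r$. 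The reverse inclusion is immediate, since $U\in X_r$ admits a basis simultaneously homogeneous for all $\partial_i$, which in particular is $\partial_r$-homogeneous and witnesses $U\in X_{r-1}$.

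Chaining the equalities $\chi_{\ud}(M)=\chi(X_{r-1})=\chi((X_{r-1})^{\partial_r})=\chi(X_r)$ completes the induction. There is really no obstacle here beyond the bookkeeping; the substantive content is already packaged in Theorem~\ref{thm-Grad} and Lemma~\ref{lem-stab-homo}. The only subtlety worth being careful about is reading the defining condition of $X_i$ correctly: it requires a \emph{single} common basis that is homogeneous for every $\partial_j$ simultaneously, and it is precisely Lemma~\ref{lem-stab-homo} (via its constructive proof) that guarantees this from the pointwise torus-fixedness, so that the iterated construction does not degenerate as $i$ grows.
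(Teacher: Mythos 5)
Your proof is correct and takes essentially the same route as the paper, which disposes of the corollary in one line ("follows directly from Theorem~\ref{thm-Grad}, since different $\CC^\ast$-actions commute"); you have simply unpacked the iterated application of Theorem~\ref{thm-Grad}, with Lemma~\ref{lem-stab-homo} correctly supplying the identification $(X_{r-1})^{\partial_r}=X_r$ that makes the chaining legitimate.
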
This corollary follows directly from Theorem~\ref{thm-Grad}, since different $\CC^\ast$-actions commute.

\subsection{Proof of Proposition~\ref{prop-dir-sum}}\label{sec-proof-prop-dir-sum}

We choose any basis of $M$ and any basis of $N$. So the union induces a basis of $M\oplus N$. Using an R-grading $\partial$, we have to compute the Euler characteristic of the set of fixed points. This variety $\Gr_{\ud}(M\oplus N)^\partial$ can be decomposed into a union of locally closed sets $X_{\uc}$, where the subrepresentation of $M$ has the dimension vector $\uc$ and the subrepresentation of $N$ has the dimension vector $\ud-\uc$. Then $\chi_{\ud}(M)=\sum_{\uc}\chi(X_{\uc})=\chi_{\uc}(M)\chi_{\ud-\uc}(N)$.$\hfill \Box$

\section{Tree and band modules}\label{sec-tree-band}

 \subsection{Proof of Theorem~\ref{thm-tree-band}\eqref{part-tree-band-1}} Let $Q$ and $S$ be quivers, $F\colon S\to Q$ a winding and $V$ a $S$-representation.

It is enough to consider the case $\dim_{\CC}(V_i)=1$ for all $i\in S_0$. By Remark~\ref{rem-basis-FV}, the set $E=\{F_\ast(f_i)|i\in S_0\}$ is a basis of $F_\ast(V)$. We write $\partial(i)$ instead of $\partial(F_\ast(f_i))$ for all $i\in S_0$.

 To prove Theorem~\ref{thm-tree-band}\eqref{part-tree-band-1} we can use inductively Theorem~\ref{thm-Grad} and Lemma~\ref{lem-auf-homos-testen} and~\ref{lem-nice-Grad-stabil}. Let $\partial$ be a nice grading of $F_\ast(V)$. Define a new quiver $Q'$ by \begin{align*}Q'_0=&\{(F_0(i),\partial(i))|i\in S_0\}\\ Q'_1=&\{(\partial(s(a)),\partial(t(a)),F_1(a))|a\in S_1\}\\ s'(\partial(s(a)),\partial(t(a)),F_1(a))=&(s(F_1(a)),\partial(s(a)))\\ t'(\partial(s(a)),\partial(t(a)),F_1(a))=&(t(F_1(a)),\partial(t(a)))\text{ for all $a\in Q_1$.}\end{align*} Define windings $F'\colon S\to Q'$ by \begin{align*} i\mapsto& (F_0(i),\partial(i))\\a\mapsto& (\partial(s(a)),\partial(t(a)),F_1(a))\end{align*} and $G\colon Q'\to Q$ by \begin{align*}(F_0(i),\partial(i))\mapsto&F_0(i)\\(\partial(s(a)),\partial(t(a)),F_1(a))\mapsto&F_1(a).\end{align*} Then $F=GF'$ and by Theorem~\ref{thm-Grad} $\chi_{\ud}(F_\ast(V))=\sum_{\ut\in G^{-1}(\ud)}\chi_{\ut}\left(F'_\ast(V)\right)$.

\begin{ex} We have a look at Example~\ref{ex-rem-basis-FV}. Let $Q'$ and $F'$ be described by the following picture.
\begin{align*}S=\left(\vcenter{\begin{xy}\SelectTips{cm}{}\xymatrix@-0.8pc{
  1\ar[d]_\alpha&&1'\ar[d]^{\beta'}\\
  2\ar[dr]_\beta&&2'\ar[dl]^{\alpha'}\\
  &3
  }\end{xy}}\right)\stackrel{F'}\longrightarrow Q'=\left(\vcenter{\begin{xy}\SelectTips{cm}{}\xymatrix@-0.8pc{
  1\ar@<-0.5ex>[d]_\alpha\ar@<0.5ex>[d]^{\beta'}\\
  2\ar@<-0.5ex>[d]_\beta\ar@<0.5ex>[d]^{\alpha'}\\
  3
  }\end{xy}}\right)\stackrel{G}\longrightarrow Q=\Big(\vcenter{\begin{xy}\SelectTips{cm}{}\xymatrix@+0pc{
  \circ\ar@(ul,dl)[]_\alpha\ar@(ur,dr)[]^\beta}\end{xy}}\Big) \end{align*} Using the nice grading $\partial_1$, it is enough to observe $F'_\ast(V)$ and $\chi_{\ut}\left(F'_\ast(V)\right)$ to compute $\chi_{\ud}(F_\ast(V))$. So the nice $\partial_1$-grading $\partial_2$ induces a nice grading of $F'_\ast(V)$.
\end{ex}

\begin{lem}\label{lem-tree-gl-tree-band} Equation~\eqref{gl-tree-band} holds for each tree module $F_\ast(V)$.
\end{lem}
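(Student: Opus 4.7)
The plan is to deduce the lemma by induction on a measure of the non-injectivity of $F_0\colon S_0\to Q_0$, using the nice-grading refinement $F=GF'$ constructed in the paragraph preceding the statement. First, the base case: if $F_0$ is injective on $S_0$, then the winding conditions (2) and (3) of Section~\ref{sec-mor-rep} force $F_1$ to be injective as well, so $F$ identifies $S$ with a subquiver of $Q$. Hence $F_\ast(V)$ is canonically $V$ viewed as a $Q$-representation supported on $F_0(S_0)$, and for each dimension vector $\ud$ of $Q$ either $\uF^{-1}(\ud)=\emptyset$ and $\Gr_{\ud}(F_\ast(V))$ is empty, or $\uF^{-1}(\ud)$ is a singleton $\{\ut\}$ with $\Gr_{\ud}(F_\ast(V))\cong\Gr_{\ut}(V)$; in both cases Equation~\eqref{gl-tree-band} is tautological.

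For the inductive step, assume $F_0(i)=F_0(j)$ for some $i\neq j$ in $S_0$. By Remark~\ref{rem-basis-FV}, since $S$ is a tree I may choose a map $\partial\colon S_1\to\ZZ$ satisfying condition (S2), and it extends uniquely (up to shift) to a nice grading of $F_\ast(V)$; Lemma~\ref{lem-nice-Grad-stabil} makes it stable on $\Gr_{\ud}(F_\ast(V))$. When $\partial$ can be chosen to separate at least one colliding pair, the refined winding $F'\colon S\to Q'$ in the construction above has strictly smaller vertex fibres than $F$. Theorem~\ref{thm-Grad} then yields $\chi_{\ud}(F_\ast(V))=\sum_{\ut\in\uG^{-1}(\ud)}\chi_{\ut}(F'_\ast(V))$, and since $F=GF'$ gives the disjoint decomposition $\uF^{-1}(\ud)=\bigsqcup_{\ut\in\uG^{-1}(\ud)}(\uF')^{-1}(\ut)$, the inductive hypothesis applied to $F'$ produces Equation~\eqref{gl-tree-band} for $F$.

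The main obstacle is the case where no single nice grading separates any colliding pair. Here the plan is to iterate: by the corollary following Theorem~\ref{thm-Grad} stable gradings may be applied successively, and at each stage the constraint (S2) on the next grading weakens because the prior gradings provide a finer partition of $S_1$ via the source/target data. The tree structure of $S$, together with the winding conditions, ensures that along the unique undirected path from $i$ to $j$ any two consecutive arrows traversed in opposite directions must have distinct images in $Q_1$; a direct combinatorial check then shows that after finitely many refinements the partition of the arrows of that path is fine enough for the signed sum $\sum\varepsilon_l\partial_k(a_l)=\partial_k(j)-\partial_k(i)$ to become nonzero for some refined grading $\partial_k$. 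Iterating over all colliding pairs, one eventually obtains a composite refinement $F^{(r)}\colon S\to Q^{(r)}$ that is injective on $S_0$, and the base case concludes the argument.
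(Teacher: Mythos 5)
Your framework matches the paper's: handle the base case where $F_0$ is injective, and otherwise use the $F=GF'$ refinement induced by a nice grading, inducting on the failure of injectivity of $F_0$. The first two paragraphs are sound. The difficulty, as you correctly locate, is whether some nice grading actually separates a colliding pair — and that is exactly where your argument has a genuine gap.

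Your third paragraph proposes to resolve it by iterated nice $\partial_1,\ldots,\partial_r$-gradings, asserting that ``a direct combinatorial check then shows that after finitely many refinements the partition of the arrows \dots{} is fine enough'' to force a nonzero signed sum. That check is never performed, and it is not obvious; as stated it is a hope, not a proof. Moreover, the entire third paragraph addresses a case that never actually occurs: a \emph{single} nice grading always suffices. The missing idea, which is the heart of the paper's proof, is a minimality argument. Among all pairs $i\neq j$ with $F_0(i)=F_0(j)$, pick one whose connecting path $S'$ in the tree $S$ is shortest; then $S'$ is of type $A_l$ with endpoints $1$ and $l$. Setting $\partial(1)=1$ and $\partial(k)=0$ for $k>1$ gives a grading on $S'$, and condition (S2) reduces to showing $F_1(s_1)\neq F_1(s_k)$ for all $1<k<l$. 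This follows from the winding conditions combined with minimality: an equality $F_1(s_1)=F_1(s_k)$ would produce either two arrows with a common source/target having the same $F_1$-image (forbidden by the winding axioms) or a shorter pair $i',j'$ with $F_0(i')=F_0(j')$, contradicting the choice of $S'$. Since $S$ is a tree, this nice grading on $S'$ extends to one on all of $S$. Your write-up never constructs such a grading nor shows one must exist, so the induction does not go through as written; you should replace the speculative iteration with the minimality argument.
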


\begin{proof} By the previous observation, it is enough to treat the cases when $F_0\colon S_0\to Q_0$ is not injective. If $i,j\in S_0$ exist with $F_0(i)=F_0(j)$ and $i\neq j$, we construct a nice grading $\partial$ of $F_\ast(V)$ such that $\partial(i)\neq\partial(j)$. 

 Let $S'$ be a minimal connected subquiver of $S$ such that there exist $i,j\in S'_0$  with $F_0(i)=F_0(j)$ and $i\neq j$. Then $S'$ is of type $A_l$. Let $F'\colon S'\to Q$ be the winding induced by $F$ and $V'$ the $S'$-representation induced by $V$. Since $S$ is a tree, every nice grading of $F'_\ast(V')$ can be extended to a nice grading of $F_\ast(V)$.

 So without loss of generality let $S'$ be equal to $S$. So $S_0=\{1,\ldots,l\}$ and  $S_1=\{s_1,s_2,\ldots,s_{l-1}\}$ as in Section~\ref{sec-mor-rep} and $F_0(1)=F_0(l)$ and $1<l$. So $\partial\colon S_0\to \ZZ, i\mapsto\delta_{i1}$ defines a grading of $F_\ast(V)$ with $\partial(1)=1\neq0=\partial(l)$. Since $F_0(2)\neq F_0(l-1)$, we have $F_1(s_1)^{-\varepsilon_1}\neq F_1(s_{l-1})^{\varepsilon_{l-1}}$ and so for all $1<k<l$ the equation $F_1(s_1)\neq F_1(s_k)$ holds by the minimality of $S$. Therefore $\partial$ is a nice grading.
\end{proof}

\begin{lem}\label{lem-band-gl-tree-band} Equation~\eqref{gl-tree-band} holds for each band module $F_\ast(V)$.
\end{lem}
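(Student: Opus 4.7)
The plan is to adapt the proof of Lemma~\ref{lem-tree-gl-tree-band} to the cyclic setting. By Proposition~\ref{prop-dir-sum}, both sides of Equation~\eqref{gl-tree-band} are additive in direct summands of $V$, so I may assume $V$ is indecomposable. By Remark~\ref{rem-ART-Atilde}, an indecomposable $S$-representation is either a string module (preprojective or preinjective) or a band module (regular).

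If $V$ is a string module of $S$, then $V = F'_\ast(V')$ for some string $F'\colon S' \to S$ with $S'$ of type $A_{l'}$ and $V'$ one-dimensional at every vertex. The composition $FF'\colon S' \to Q$ is then a winding from a tree, so Lemma~\ref{lem-tree-gl-tree-band} applied to $FF'$ gives $\chi_{\ud}(F_\ast(V)) = \chi_{\ud}((FF')_\ast(V')) = \sum_{\uu}\chi_{\uu}(V')$, summed over the dimension vectors $\uu$ of $S'$ with image $\ud$ under $FF'$. Applying the same lemma to $F'$ expands the right-hand side of Equation~\eqref{gl-tree-band} into the same sum.

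If instead $V$ is a band module of $S$ (every $V_a$ an isomorphism, $V$ indecomposable), I imitate the proof of Lemma~\ref{lem-tree-gl-tree-band}: whenever $F_0(i) = F_0(j)$ for some $i \neq j$ in $S_0$, I construct a nice grading $\partial$ on $F_\ast(V)$ with $\partial(i) \neq \partial(j)$, use Theorem~\ref{thm-Grad} to factor $F = GF'$ as in that proof, and replace $F$ by $F'$. Iterating, together with an analogous construction separating arrows when $F_1$ fails to be injective, reduces to the case where $F$ is an embedding of quivers, at which point $\uF^{-1}(\ud)$ is a singleton $\{\ut\}$ and the equation reduces to the clear identity $\chi_{\ud}(F_\ast(V)) = \chi_{\ut}(V)$.

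The main obstacle is the construction of the nice grading $\partial$ in the band setting. A map $\partial\colon S_1 \to \ZZ$ extends to a grading on $S_0$ only if the cycle-closure relation $\sum_{i=1}^l \varepsilon_i \partial(s_i) = 0$ holds, and condition~(S2) of Remark~\ref{rem-basis-FV} forces $\partial(a) = \partial(b)$ whenever $F_1(a) = F_1(b)$. Given $i \neq j$ in $S_0$ with $F_0(i) = F_0(j)$, I split into two subcases via the minimal connected subquiver $S'' \subseteq S$ containing both. If $S''$ is a proper subquiver of $S$, then $S''$ is of type $A_{l''}$ and the tree-case construction from Lemma~\ref{lem-tree-gl-tree-band} produces the required $\partial$ on $S''$, which extends by zero across the complementary arrows of $S$. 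If $S'' = S$, the aperiodicity condition from Example~\ref{ex-band-zerl} prevents the two arcs of $S$ between $i$ and $j$ from being $F_1$-equivalent in a balanced fashion, so a nice grading satisfying the cycle-closure relation and condition~(S2) with $\partial(i) \neq \partial(j)$ can still be constructed.
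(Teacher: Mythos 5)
Your reduction to indecomposable $V$ via Proposition~\ref{prop-dir-sum} is sound (both sides of Equation~\eqref{gl-tree-band} are bilinear in direct summands), and your handling of the string-module case by composing windings $FF'\colon S'\to Q$ and invoking Lemma~\ref{lem-tree-gl-tree-band} twice is correct and clean. This is a genuinely different first step from the paper, which instead works uniformly under the hypothesis $\dim_\CC V_i=1$, and it does buy you a shorter disposal of the non-regular indecomposables.

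However, the band-module subcase — which is the actual content of the lemma — has a genuine gap, and it is exactly where the paper's argument does the real work. When you extend the tree-case grading $\partial\colon S''_0\to\ZZ,\ k\mapsto\delta_{ki}$ by zero to $S_0\setminus S''_0$, you obtain a map on $S_0$ whose induced edge-values automatically sum to zero around the cycle, so cycle-closure is not the obstruction. The obstruction is condition~(S2): there may be an arrow $b$ on the complementary arc with $F_1(b)=F_1(a_0)$, where $a_0$ is the unique arrow in $S''$ adjacent to $i$, the only arrow on which your proposed grading is nonzero; then $\partial(a_0)=\pm1\neq0=\partial(b)$ and the grading is not nice. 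The minimality of $S''$ controls repeated labels only \emph{inside} $S''$ — nothing you say rules out collisions with the other arc. Your final sentence, that ``the aperiodicity condition from Example~\ref{ex-band-zerl} prevents the two arcs from being $F_1$-equivalent in a balanced fashion, so a nice grading ... can still be constructed,'' is precisely the claim that needs a proof, and it is not one.

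The paper resolves this by an explicit construction tied to the cycle constraint. For $a\in Q_1$ set $\rho(a):=\sum_{i=1}^{l}\varepsilon_i\,\delta_{a,F_1(s_i)}$, the signed winding number of $F$ over $a$, and let $\partial^{(a)}$ be the edge-grading $b\mapsto\delta_{ab}$ on $Q_1$, pulled back to $S_1$. If $\rho(a)=0$ this lifts to a nice grading on $S_0$; in general the combination $\partial^{(a,b)}:=\rho(a)\partial^{(b)}-\rho(b)\partial^{(a)}$ always has vanishing cycle sum and so always lifts. Crucially, these gradings are constant on $F_1$-fibers by construction, so (S2) is automatic. The heart of the proof is then the case analysis: if $\partial^{(F_1(s_i))}$ (when $\rho(F_1(s_i))=0$) or some $\partial^{(F_1(s_i),F_1(s_k))}$ separates $i$ and $j$, iterate; if none does, one deduces $\varepsilon_k\rho(F_1(s_k))=\varepsilon_m\rho(F_1(s_m))$ for all $k,m$, forcing a period $r<l$ in $F_1$, so $F_\ast(V)$ decomposes by Example~\ref{ex-band-per}, a contradiction. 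This discriminating construction and the periodicity argument are entirely absent from your proposal, so the band case is not established.
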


\begin{proof}  Let $i,j\in S_0$ with $F_0(i)=F_0(j)$, $i<j$ and $j-i$ minimal (i.e.\ $F_0(k)\neq F_0(m)$ for all $k,m\in S_0$ with $i\leq k<m\leq j$ and $(i,j)\neq(k,m)$). If no such tuple $(i,j)\in S^2_0$ exists, we are done. By the previous observations, it is again enough to construct a nice grading $\partial$ of $F_\ast(V)$ such that $\partial(i)\neq\partial(j)$.

For each $a\in Q_1$ let $\rho(a):=\sum_{i=1}^l\varepsilon_i\delta_{a,F_1(s_i)}$ and $\partial^{(a)}\colon Q_1\to\ZZ,b\mapsto\delta_{ab}$ a map.
\begin{itemize}
 \item If $a\in Q_1$ with $\rho(a)=0$, then $\partial^{(a)}$ induces a nice grading $\partial^{(a)}$ of $F_\ast(V)$ such that \begin{align*}\partial^{(a)}(i)-\partial^{(a)}(j)={\sum}_{k=i}^{j-1}\varepsilon_k\delta_{a,F_1(s_k)}.\end{align*}

 \item If $a,b\in Q_1$, then $\partial^{(a,b)}:=\rho(a)\partial^{(b)}-\rho(b)\partial^{(a)}$ induces similarly a nice grading $\partial^{(a,b)}$ of $F_\ast(V)$.
\end{itemize}
If $\rho(F_1(s_i))=0$, then $\partial^{(F_1(s_i))}(i)-\partial^{(F_1(s_i))}(j)=\varepsilon_i$ since $j-i$ is minimal. Thus $F_1(s_i)\neq F_1(s_k)$ for all $k\in S_0$ with $i<k<j$.

If $\rho(F_1(s_i))\neq0$, we should have a look at $\partial^{(F_1(s_i),F_1(s_k))}$ for all $k\in S_0$. If $\partial^{(F_1(s_i),F_1(s_k))}(i)-\partial^{(F_1(s_i),F_1(s_k))}(j)\neq0$ for some $k\in S_0$, we are done. So let us assume $\partial^{(F_1(s_i),F_1(s_k))}(i)-\partial^{(F_1(s_i),F_1(s_k))}(j)=0$ for all $k\in S_0$ and for all tuples $(i,j)\in S^2_0$ with $0<j-i$ minimal. If $F_1(s_i)\neq F_1(s_k)$, then \begin{align*}0=&\partial^{(F_1(s_i),F_1(s_k))}(i)-\partial^{(F_1(s_i),F_1(s_k))}(j)\\
 =&\rho(F_1(s_i))\left({\sum}_{m=i+1}^{j-1}\varepsilon_m\delta_{F_1(s_k),F_1(s_m)}\right)-\rho(F_1(s_k))\varepsilon_i\\
 =&\rho(F_1(s_i))\varepsilon_{k'}-\rho(F_1(s_k))\varepsilon_i\end{align*} for some $k'\in S_0$ with $i<k'<j$ and $F_1(s_k)=F_1(s_{k'})$. So $\varepsilon_k\rho(F_1(s_k))=\varepsilon_m\rho(F_1(s_m))$ for all $k,m\in S_0$. In other words, $\rho(F_1(s_k))\neq0$ for all $k\in S_1$ and $\varepsilon_k=\varepsilon_m$ for all $k,m\in S_0$ with $F_1(s_k)=F_1(s_m)$. So some $r\in\NN_{>0}$ exists such that $F_1(s_k)=F_1(s_{k+r})$ for all $k\in S_0$. By Example~\ref{ex-band-per}, the representation $F_\ast(V)$ is decomposable if $r<l$. This is a contradiction.
\end{proof}

\subsection{Proof of Theorem~\ref{thm-tree-band}\eqref{part-tree-band-2}}\label{sec-tree-band-2}

 Let $S$ be a quiver of type $\widetilde A_{l-1}$ and $\{i_1,\ldots,i_r\}$ be the sources and $\{i'_1,\ldots,i'_r\}$ be the sinks of $S$. It is visualized in Figure~\ref{fig-tAl}. For all $i,j\in S_0$ with $i<j$ let $S_{ij}$ be the full subquiver of $S$ with $(S_{ij})_0=\{i,i+1,\ldots,j\}$.

 \begin{lem}\label{lem-band} Let $S$ be a quiver as above and $V=(V_i,V_{s_i})_{i\in S_0}$ a band module. Let $\ut=(t_1,\ldots,t_l)$ be a dimension vector of $S$ and $n:=\dim_{\CC}(V_i)$ for some $i\in S_0$. Then \begin{align}\label{gl-lem-band}\chi_{\ut}\left(V\right)=\sum_{k\in\ZZ}\begin{pmatrix}t_{i_1}\\k\end{pmatrix}\begin{pmatrix}n-t_{i_1}\\k\end{pmatrix} X^{(1,1)}_{t_{i_1}-k,k,k,n-t_{i_1}-k}(\ut')\end{align} with \begin{align*}\ut'=\begin{cases}
 (0,t_{i_1+1}-t_{i_1},\ldots,t_{i'_1-1}-t_{i_1},\\\qquad t_{i'_1}-t_{i_1}-k,t_{i'_1+1}-t_{i_1},\ldots,t_{i_1-1}-t_{i_1})&\text{ if }r=1,\\
 (0,t_{i_1+1}-t_{i_1},\ldots,t_{i'_1}-t_{i_1},t_{i'_1+1},\ldots,t_{i'_r-1},\\\qquad t_{i'_r}-t_{i_1},\ldots,t_{i_1-1}-t_{i_1})&\text{ if }r>1.\end{cases}\end{align*} For all $s,t\in S_0$ and $\alpha,\beta,\gamma,\delta\in\NN$ we define $X^{(s,t)}_{\alpha,\beta,\gamma,\delta}(\ut)$ to be 
 \begin{align*}\chi_{\ut}\left(M(S_{i'_s+1,i'_r-1})^\alpha\oplus M(S_{i'_s+1,i_1-1})^\beta\oplus M(S_{i_t+1,i'_r-1})^\gamma\oplus M(S_{i_t+1,i_1-1})^\delta\right),
 \end{align*} where $M(S_{ij})$ is an indecomposable $S_{ij}$-representation with dimension $j-i+1$ for all $i,j\in S_0$ with $i<j$. 
\end{lem}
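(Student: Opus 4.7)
The plan is to apply Theorem~\ref{thm-Grad} with a nice grading $\partial$ on $V$ designed to break the cyclic structure of $S$ at the source $i_1$, thereby reducing the computation to subrepresentations of a disjoint union of string modules indexed by the arcs between consecutive sources/sinks. The parameter $k$ in the sum will emerge as the combinatorial data measuring how the subspace $U_{i_1}\subseteq V_{i_1}=\CC^n$ sits relative to the Jordan structure of the monodromy.

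First, I would choose a basis of $V$ adapted to the band structure. Because $V$ is a band module, the monodromy obtained by composing the isomorphisms $V_{s_i}$ once around the cycle (based at $V_{i_1}$) is conjugate to a single Jordan block $T$ with eigenvalue $\lambda\in\CC^\ast$. Pick a Jordan basis $\{e_1,\ldots,e_n\}$ of $V_{i_1}$ for $T$ and transport it around the cycle along the isomorphisms $V_{s_i}$, absorbing all of the monodromy into a single arrow. This yields a basis of $V$ compatible with the orientation of $S$ everywhere except at the cut arrow, where the Jordan shift appears. I would then define a grading $\partial$ that assigns distinct integer degrees to the basis vectors at $V_{i_1}$ and propagates these consistently through the cycle in accordance with Remark~\ref{rem-basis-FV}. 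The shape of $S$ at the source $i_1$ (two outgoing arrows $V_{i_1}\to V_{i_1\pm 1}$) and at the sink $i'_1$ (two incoming arrows) ensures that the induced grading satisfies the nice-grading criterion on the restricted basis, so by Lemma~\ref{lem-nice-Grad-stabil} it is stable on $\Gr_{\ut}(V)$.

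Second, I would apply Theorem~\ref{thm-Grad} to reduce $\chi_{\ut}(V)$ to the Euler characteristic of $\Gr_{\ut}(V)^{\partial}$. A $\partial$-homogeneous subrepresentation $U$ is determined by a subset $A\subseteq\{e_1,\ldots,e_n\}$ with $|A|=t_{i_1}$ spanning $U_{i_1}$, together with a compatible homogeneous subrepresentation on the rest of $V$. Because $i_1$ is a source, the conditions $V_{s_{i_1}}(U_{i_1})\subseteq U_{i_1+1}$ and $V_{s_{i_1-1}}(U_{i_1})\subseteq U_{i_1-1}$ together with homogeneity force analogous set-theoretic inclusions of chosen basis subsets at the neighbouring vertices. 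Once the cycle is cut at the monodromy arrow, the remaining quiver is of type $A$, and the propagating isomorphisms identify the surviving data with that of choosing a subrepresentation in a direct sum of string modules on sub-intervals $S_{i,j}$ of $S$.

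Third, I would perform the combinatorial bookkeeping. For each basis vector $e_j$, its ``fate'' after transport through the monodromy $T$ splits the basis into four classes according as $e_j\in A$ or $e_j\notin A$, and $T(e_j)$ is or is not in $A$ modulo lower-order basis vectors. The four classes have sizes $t_{i_1}-k$, $k$, $k$, $n-t_{i_1}-k$ respectively for some $k\in\{0,\ldots,\min(t_{i_1},n-t_{i_1})\}$, and the number of subsets $A$ with a prescribed split into those four classes is exactly $\binom{t_{i_1}}{k}\binom{n-t_{i_1}}{k}$. Each of the four classes contributes a copy of one of the four string modules $M(S_{i'_1+1,i'_r-1})$, $M(S_{i'_1+1,i_1-1})$, $M(S_{i_1+1,i'_r-1})$, $M(S_{i_1+1,i_1-1})$ from the definition of $X^{(1,1)}_{\alpha,\beta,\gamma,\delta}$, yielding the factor $X^{(1,1)}_{t_{i_1}-k,k,k,n-t_{i_1}-k}(\ut')$, with the shifted dimension vector $\ut'$ accounting for the degrees of freedom absorbed by fixing $U_{i_1}$.

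The principal obstacle will be the careful verification that cutting the cycle at $i_1$ combined with fixing $A$ really produces the stated direct sum of string modules with those precise multiplicities, including the case distinction between $r=1$ (where the cut forces the single sink $i'_1$ to reassemble the two halves) and $r>1$ (where the intermediate sources and sinks remain genuine interior vertices). Once this identification is in place, summing over $k$ and using Theorem~\ref{thm-tree-band}\eqref{part-tree-band-1} on each resulting tree-module yields Equation~\eqref{gl-lem-band}.
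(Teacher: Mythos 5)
Your approach has a fundamental gap at the very first step: the grading $\partial$ you propose to define cannot be a nice grading, and without that stability you cannot invoke Theorem~\ref{thm-Grad}. Because $V$ is an \emph{indecomposable} band module with $\dim_{\CC}(V_i)=n>1$, the monodromy around the cycle based at $V_{i_1}$ is $\lambda\,\mathrm{id}+N$ with $N$ a nontrivial nilpotent Jordan block. After you transport a Jordan basis $\{e_1,\dots,e_n\}$ around and absorb the monodromy into a single arrow $a$, the action of $M_a$ on a basis vector $e_j$ (for $j>1$) has two nonzero components, on $e_j$ (coefficient $\lambda\neq0$) and on $e_{j-1}$ (coefficient $1$). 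The nice-grading condition~\eqref{gl-nice} then forces $\Delta(a)=\partial(e_j)-\partial(e_j)=0$ and simultaneously $\Delta(a)=\partial(e_{j-1})-\partial(e_j)$, hence $\partial(e_{j-1})=\partial(e_j)$ for all $j$. Passing to a nice $\partial_1,\dots,\partial_r$-grading does not help, since any such $\partial_1,\dots,\partial_r$ that are themselves nice already satisfy $\partial_m(e_{j-1})=\partial_m(e_j)$ by the same argument, so the $\Delta$-arguments coincide and the conclusion persists. R-gradings are also unavailable because $V$ is indecomposable. In short, no stable grading on $\Gr_{\ut}(V)$ can distinguish the $n$ Jordan layers at $V_{i_1}$, so a $\partial$-homogeneous subrepresentation is \emph{not} determined by a subset $A\subseteq\{e_1,\dots,e_n\}$, and the whole four-class bookkeeping you build on top of that collapses. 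You are also misapplying Remark~\ref{rem-basis-FV}, which is stated only for representations $F_\ast(V)$ with $\dim_{\CC}(V_i)=1$.

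The paper's actual proof of Lemma~\ref{lem-band} does not use a torus action at all. Instead it uses Gauss normal form at the source $i_1$ to stratify $\Gr_{\ut}(V)$ into locally closed cells $\Gr_{\ut}(V)_{\uj}$ indexed by the pivot set $\uj$, and then contracts each cell onto a smaller cell $\Gr_{\ut}(V)^0_{\uj}$ via the unipotent automorphisms $\prod_j(1+\lambda_{1j}(U)\psi^{j_1-j})^{-1}$ built from the nilpotent endomorphism $\psi$; this retraction is an algebraic morphism with affine fibers, hence an Euler-characteristic equivalence. For $U$ in the reduced cell one has $U_{\uj}\subseteq U\subseteq V_{\uj}$ with explicitly computed quotients $V_{\uj}/U_{\uj}$ isomorphic to a smaller band module $V^{(n-j_1)}$ direct sum with the string modules $M(S_{\cdot,\cdot})$; an induction on $t_{i_1}$ with a simple count of the tuples $\uj$ having a given number $n_{\uj}=k$ of ``adjacent pivot gaps'' produces the binomial factors $\binom{t_{i_1}}{k}\binom{n-t_{i_1}}{k}$ and the form of $\ut'$. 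Your combinatorial intuition about the four classes and the binomial coefficients is pointing in the right direction, but it needs the Gauss normal form and the affine fibration via $\psi$ to be made rigorous; a stable grading cannot do that job here.
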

(We use here the convention $\begin{pmatrix}r\\s\end{pmatrix}=0$ for all $r,s\in\ZZ$ if $s<0$ or $s>r$.) We visualize the representations $M(S_{i'_s+1,i'_r-1})$, $M(S_{i'_s+1,i_1-1})$, $M(S_{i_t+1,i'_r-1})$ and $M(S_{i_t+1,i_1-1})$ in Figure~\ref{fig-lem-band}.
\begin{figure}[ht]
$$\vcenter{\begin{xy}\SelectTips{cm}{}\xymatrix@-1.9pc{
&i_r\ar[ddl]\ar[dddr]&&&&i_{s+1}\ar[dddl]\ar[ddr]&&  &&&&i_r\ar[dddl]\ar[dddr]&&&&i_{s+1}\ar[dddl]\ar[ddr]\\
&&&&&&&                                              &&i_1-1\ar[ddr]\\
i'_r-1&&&&&&i'_s+1&                                  &&&&&&&&&i'_s+1\\
&&&\ldots&&&  &                                &&&i'_r&&&\ldots&&&\\
&i_r\ar[ddl]\ar[dddr]&&&&i_{t+1}\ar[dddl]\ar[dddr]&  &&&&&i_r\ar[dddl]\ar[dddr]&&&&i_{t+1}\ar[dddl]\ar[dddr]\\
&&&&&&&i_t+1\ar[ddl]                                 &&i_1-1\ar[ddr]&&&&&&&&i_t+1\ar[ddl]\\
i'_r-1&&&&&&&                                        \\
&&&\ldots&&&i'_t&                              &&&i'_r&&&\ldots&&&i'_t
} \end{xy}}$$
\caption{Modules occurring in the definition of $X^{(s,t)}_{\alpha,\beta,\gamma,\delta}(\ut)$
}\label{fig-lem-band}
\end{figure}
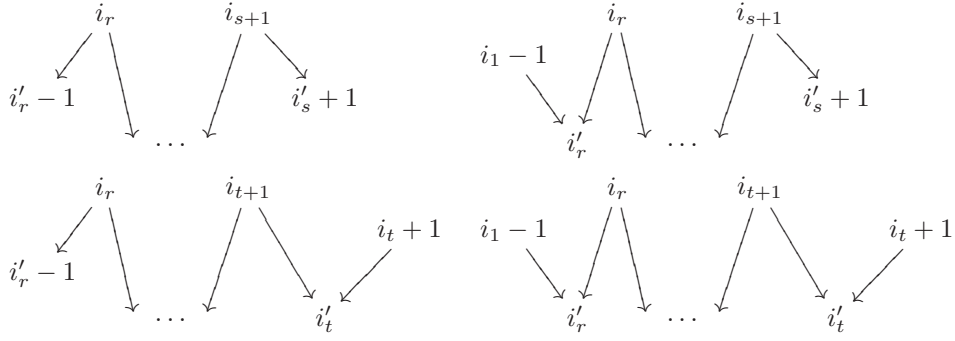

\begin{proof}[Proof of Lemma \ref{lem-band}] Let $\{e_{ik}|i\in S_0,1\leq k\leq n\}$ be a basis of $V$ such that the following holds.\begin{enumerate}
 \item  For all $1\leq m\leq n$, the vector space $V^{(m)}:=\langle e_{i,k}|i\in S_0,1\leq k\leq m\rangle$ is a subrepresentation of $V$ and a band module.

 \item There exists a nilpotent endomorphism $\psi$ of $V$ such that $\psi(e_{i1})=0$ and $\psi(e_{ik})=e_{i,k-1}$ for all $1<k\leq n$ and all $i\in S_0$.
 \end{enumerate}
Let $U=(U_i,V_{s_i}|_{U_i})_{i\in S_0}\in{\Gr}_{\ut}(V)$. Using the Gauß algorithm, a unique tuple \begin{align}\label{gl-band-1}\uj(U):=(1\leq j_1<j_2<\ldots<j_{t_{i_1}} \leq n)\end{align} and unique $\lambda_{kj}(U)\in\CC$ exist such that the vector space $U_{i_1}$ is generated by \begin{align*}e_{i_1j_m}+{\sum}_{j=1,j\neq j_k\forall k}^{j_m-1}\lambda_{mj}(U)e_{i_1j}\end{align*} with $1\leq m\leq t_{i_1}$. The variety ${\Gr}_{\ut}(V)$ can be decomposed into a disjoint union of locally closed subsets \begin{align*}{\Gr}_{\ut}(V)_{\uj}:=\Big\{U\subseteq V\Big|\udim(U)=\ut,\uj(U)=\uj\Big\},\end{align*} where $\uj\in\NN^{t_{i_1}}$. For each such tuple $\uj$ let \begin{align*}{\Gr}_{\ut}(V)_{\uj}^0:=\Big\{U\subseteq V\Big|\udim(U)=\ut,\uj(U)=\uj,\lambda_{1j}(U)=0\forall j\Big\}.\end{align*} These are locally closed subsets of ${\Gr}_{\ut}(V)$. The projection $\pi\colon{\Gr}_{\ut}(V)_{\uj}\to {\Gr}_{\ut}(V)_{\uj}^0$ with \begin{align}\label{gl-band-2}U\mapsto \prod_{j=1}^{j_1-1}\left(1+\lambda_{1j}(U)\psi^{j_1-j}\right)^{-1}(U)\end{align} is an algebraic morphism with affine fibers. For $U\in{\Gr}_{\ut}(V)_{\uj}^0$ let $U_{\uj}$ be the subrepresentation of $V$ generated by $e_{i_1j_1}$. Let $V_{\uj}$ be the subrepresentation of $V$ with vector space basis  \begin{align*}\{e_{i_1k}|j_1\leq k\leq n\}\cup\{e_{ik}|i_1\neq i\in S_0,1\leq k\leq n\}.\end{align*} Then $U_{\uj}\subseteq U\subseteq V_{\uj}\subseteq V$ and
\begin{align*} \udim\left(U_{\uj}\right)_i=\left\{\begin{array}{cl}
0&  \text{ if } i'_1<i<i'_r\\
2&  \text{ if } r=1, i_1'=i \text{ and } j_1>1\\
1&  \text{ otherwise.}\end{array}\right.\end{align*}
So if $j_1=1$ we get 
\begin{align}\label{gl-band-3} V_{\uj}/U_{\uj}\cong& V^{(n-1)}\oplus M\left(S_{i'_1+1,i_r'-1}\right),\end{align} and if $j_1>1$ we get
\begin{align}\label{gl-band-4} V_{\uj}/U_{\uj}\cong& V^{(n-j_1)}\oplus M\left(S_{i'_1+1,i_1-1}\right)\oplus M\left(S_{i_1+1,i_r'-1}\right)\oplus M\left(S_{i_1+1,i_1-1}\right)^{j_1-2}.\end{align}
 Thus \begin{align}\label{gl-proof-lem-band-3}
  \chi_{\ut}(V)={\sum}_{\uj\in\NN^{t_{i_1}}}\chi\left({\Gr}_{\ut}(V)_{\uj}^0\right)={\sum}_{\uj\in\NN^{t_{i_1}}}\chi_{\ut-\udim(U_{\uj})}\left(V_{\uj}/U_{\uj}\right).
\end{align}
Let $n_{\uj}:=\left|\left\{1\leq i\leq n\middle|i\neq j_m\forall m,\exists m:i+1=j_m\right\}\right|$. A simple calculation shows \begin{align*}\left|\left\{\uj\middle|n_{\uj}=k\right\}\right|=\begin{pmatrix}t_{i_1}\\k\end{pmatrix}\begin{pmatrix}n-t_{i_1}\\k\end{pmatrix}. \end{align*} We do an induction over $t_{i_1}$. Then Equation~\eqref{gl-band-4} occurs $n_{\uj}$-times, Equation~\eqref{gl-band-3} occurs $(t_{i_1}-n_{\uj})$-times and so Equation~\eqref{gl-lem-band} holds in general by an inductive version of Equation~\eqref{gl-proof-lem-band-3}.
\end{proof}

The rest of the proof of Theorem~\ref{thm-tree-band}\eqref{part-tree-band-2} is done in the next two combinatorial lemmas.

\begin{lem}\label{lem-binomial}
 Let $a,b,c,d,e,f\in\NN$. Then  \begin{align*} \begin{pmatrix}a\\b\end{pmatrix}\begin{pmatrix}b\\c\end{pmatrix}=\begin{pmatrix}a-c\\b-c\end{pmatrix} \begin{pmatrix}a\\c\end{pmatrix}\end{align*} and
\begin{align*}\sum_{m\in\ZZ}\begin{pmatrix}a\\b-m\end{pmatrix}\begin{pmatrix}b-m\\c\end{pmatrix}\begin{pmatrix}d\\e+m\end{pmatrix}\begin{pmatrix}e+m\\f\end{pmatrix}=\begin{pmatrix}a\\c\end{pmatrix}\begin{pmatrix}d\\f\end{pmatrix}\begin{pmatrix}a+d-c-f\\a+d-b-e\end{pmatrix}.\end{align*}
\end{lem}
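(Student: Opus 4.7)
The plan is to prove the first identity directly and then use it together with the Vandermonde convolution to reduce the second identity to a single closed-form binomial coefficient.

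For the first identity, the standard approach is the ``subsets of subsets'' bijection: both sides count pairs $(B,C)$ with $C\subseteq B\subseteq\{1,\dots,a\}$, $|C|=c$, $|B|=b$. The left side picks $B$ first and then $C\subseteq B$; the right side picks $C$ first and then picks the remaining $b-c$ elements of $B$ from the $a-c$ elements of $\{1,\dots,a\}\setminus C$. Alternatively one checks it algebraically by expanding both sides as ratios of factorials. Either way, this is essentially a one-line verification, and I would just record it as a well-known identity.

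For the second identity, my plan is to apply the first identity termwise. Writing
\[
\binom{a}{b-m}\binom{b-m}{c}=\binom{a}{c}\binom{a-c}{b-m-c},\qquad
\binom{d}{e+m}\binom{e+m}{f}=\binom{d}{f}\binom{d-f}{e+m-f},
\]
the factors $\binom{a}{c}\binom{d}{f}$ pull out of the sum, leaving
\[
\binom{a}{c}\binom{d}{f}\sum_{m\in\mathbb Z}\binom{a-c}{b-m-c}\binom{d-f}{e+m-f}.
\]
Setting $k=b-m-c$, so that $e+m-f=(b+e-c-f)-k$, the remaining sum is a Vandermonde convolution
\[
\sum_{k\in\mathbb Z}\binom{a-c}{k}\binom{d-f}{(b+e-c-f)-k}=\binom{(a-c)+(d-f)}{b+e-c-f}=\binom{a+d-c-f}{b+e-c-f}.
\]
Finally, using the symmetry $\binom{n}{r}=\binom{n}{n-r}$ with $n=a+d-c-f$ and $n-r=(a+d-c-f)-(b+e-c-f)=a+d-b-e$, this equals $\binom{a+d-c-f}{a+d-b-e}$, which matches the claimed right-hand side.

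I do not expect any real obstacle here: the only mild subtlety is being careful with the sign and range of the summation index so that Vandermonde applies (since $m$ ranges over all of $\mathbb Z$ but only finitely many terms are nonzero, the reindexing $k=b-m-c$ is legitimate), and double-checking the symmetry step that turns $\binom{a+d-c-f}{b+e-c-f}$ into $\binom{a+d-c-f}{a+d-b-e}$.
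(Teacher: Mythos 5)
Your proof is correct and takes the same route the paper sketches: the paper's entire proof reads ``The first equation can be shown using the definition. The second equation is a consequence of the first one,'' and your derivation (apply the first identity termwise to pull out $\binom{a}{c}\binom{d}{f}$, reindex, invoke Vandermonde, then use the symmetry $\binom{n}{r}=\binom{n}{n-r}$) is the natural fleshing-out of that remark. The only thing worth noting is that you implicitly apply the first identity with $b-m$ in place of $b$ even when $b-m<0$, which falls outside the stated hypothesis $b\in\NN$; this is harmless under the paper's convention that $\binom{r}{s}=0$ for $s<0$ or $s>r$, since both sides then vanish, but it deserves a half-sentence.
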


\begin{proof}
 The first equation can be shown using the definition. The second equation is a consequence of the first one.
\end{proof}

 \begin{lem}\label{lem-band2} Let $S$, $V$, $\ut$, $n$ as above and $1\leq m\leq r$. Then \begin{align}\label{gl-lem-band2}\chi_{\ut}\left(V\right)=&\Lambda_m\Gamma_{i_1i_m}
 \sum_{k\in\ZZ}\begin{pmatrix}t_{i_1}\\ t_{i_m}-k\end{pmatrix}\begin{pmatrix}n-t_{i_1}\\k\end{pmatrix} X^{(m,m)}_{t_{i_m}-k,k,t_{i_1}-t_{i_m}+k,n-t_{i_1}-k}(\ut')\end{align} with  \begin{align*}\Lambda_m:=\prod_{k=1}^{m-1}\frac{(n-t_{i_{k+1}})!}{t_{i_k}!}\frac{t_{i'_k}!}{(n-t_{i'_k})!},\qquad\Gamma_{ij}:=\prod_{k=i}^{j-1}\frac1{(\varepsilon_k(t_k-t_{k+1}))!}\end{align*} and \begin{align*}\ut'=\begin{cases}
 (0,\ldots,0,t_{i_r+1}-t_{i_r},\ldots,t_{i'_r-1}-t_{i_r},\\\qquad t_{i'_r}-t_{i_1}-k,t_{i'_r+1}-t_{i_1},\ldots,t_{i_1-1}-t_{i_1}) &\text{ if }m=r,\\
 (0,\ldots,0,t_{i_m+1}-t_{i_m},\ldots,t_{i'_m}-t_{i_m},t_{i'_m+1},\ldots,t_{i'_r-1},\\\qquad t_{i'_r}-t_{i_1},\ldots,t_{i_1-1}-t_{i_1})&\text{ if }m<r.\end{cases}\end{align*}

\end{lem}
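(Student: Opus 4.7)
The plan is to prove Lemma~\ref{lem-band2} by induction on $m$, with Lemma~\ref{lem-band} supplying the base case and the combinatorial identities of Lemma~\ref{lem-binomial} powering the inductive step.

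\textbf{Base case $m=1$.} Here $\Lambda_1$ and $\Gamma_{i_1 i_1}$ are both empty products equal to $1$, and $t_{i_1} - t_{i_1} + k = k$, so the formula reads
\[
\chi_{\ut}(V) = \sum_k \binom{t_{i_1}}{t_{i_1}-k}\binom{n-t_{i_1}}{k} X^{(1,1)}_{t_{i_1}-k,\,k,\,k,\,n-t_{i_1}-k}(\ut').
\]
Using $\binom{t_{i_1}}{t_{i_1}-k} = \binom{t_{i_1}}{k}$, this is precisely the content of Lemma~\ref{lem-band}.

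\textbf{Inductive step $m \to m+1$.} Assume the formula for $m < r$ and focus on the factor $X^{(m,m)}_{\alpha,\beta,\gamma,\delta}(\ut')$. Every indecomposable summand $M(S_{i'_m+1,i'_r-1})$, $M(S_{i'_m+1,i_1-1})$, $M(S_{i_m+1,i'_r-1})$, $M(S_{i_m+1,i_1-1})$ is a string module of an $A$-type quiver whose interval contains the next source $i_{m+1}$. By Proposition~\ref{prop-dir-sum} the Euler characteristic factors through the dimension vectors induced at $i_{m+1}$, and one may stratify the relevant Grassmannian by pivot positions at $i_{m+1}$ exactly in the style of Equation~\eqref{gl-band-1}. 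Contracting each stratum to its $\lambda = 0$ locus via the morphism analogous to Equation~\eqref{gl-band-2} produces a new expression of the form $\sum_{k'} \binom{\cdot}{k'}\binom{\cdot}{t_{i_{m+1}}-k'}\,X^{(m+1,m+1)}_{\cdot,\cdot,\cdot,\cdot}(\ut'')$, where $\ut''$ is the dimension vector $\ut'$ shifted so that the new vanishing block extends up through $i_{m+1}$.

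\textbf{Collapsing the double sum.} Substituting this into the inductive formula produces a double sum over $k$ (from the inductive hypothesis) and over $k'$ (from the new stratification). The Vandermonde-type identity
\[
\sum_{m\in\ZZ}\binom{a}{b-m}\binom{b-m}{c}\binom{d}{e+m}\binom{e+m}{f}=\binom{a}{c}\binom{d}{f}\binom{a+d-c-f}{a+d-b-e}
\]
from Lemma~\ref{lem-binomial} collapses this double sum into a single sum that matches the predicted form. The ratio $\frac{(n-t_{i_{m+1}})!}{t_{i_m}!}\cdot\frac{t_{i'_m}!}{(n-t_{i'_m})!}$ appended to $\Lambda_m$ is exactly what emerges from the binomial ratios after this collapse, and the arrows lying in the $A$-type interval from $i_m$ to $i_{m+1}$ contribute the factors $\prod_{k=i_m}^{i_{m+1}-1}\frac{1}{(\varepsilon_k(t_k-t_{k+1}))!}$ that extend $\Gamma_{i_1 i_m}$ to $\Gamma_{i_1 i_{m+1}}$ (by Corollary~\ref{cor-tree} and the standard count of successor closed subquivers of an $A$-type quiver with prescribed dimension vector).

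\textbf{Main obstacle.} The delicate point is bookkeeping: keeping all four families of indecomposables in $X^{(m,m)}$ synchronized through the stratification at $i_{m+1}$, matching the resulting new dimension vector $\ut''$ to the stated one, and verifying that the index substitution in Lemma~\ref{lem-binomial} correctly pairs $k$ with $k'$ so that the emerging factorial denominators are exactly those appearing in $\Lambda_{m+1}\Gamma_{i_1 i_{m+1}}$. Once this accounting is set up correctly, the algebraic identity does the rest and completes the induction.
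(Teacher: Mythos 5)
Your base case is correct, and the overall shape of the argument — induction on $m$ with Lemma~\ref{lem-band} as the base and Lemma~\ref{lem-binomial} closing the loop — is exactly the paper's plan. However, the inductive step as you describe it has two genuine problems.

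First, you cast the expansion of $X^{(m,m)}$ as a stratification by pivot positions at $i_{m+1}$ with a contraction to the $\lambda=0$ locus \emph{in the style of Equations~\eqref{gl-band-1} and~\eqref{gl-band-2}}. That machinery (the pivot tuple $\uj(U)$, the nilpotent $\psi$, the affine-fibered retraction) is built specifically for band modules and uses the cyclic $\tilde A_{l-1}$ structure. The modules appearing in $X^{(s,t)}_{\alpha,\beta,\gamma,\delta}(\ut)$ are direct sums of string modules over $A$-type intervals, so that stratification is simply not available. What is needed, and what the paper uses, is Proposition~\ref{prop-dir-sum} to split off the direct summands together with the purely combinatorial count of successor-closed subquivers of $A_l$-quivers (Theorem~\ref{thm-tree-band}\eqref{part-tree-band-1}, Corollary~\ref{cor-tree}); you mention the right reference only at the very end, as an afterthought, which contradicts your own proposed mechanism.

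Second, you treat the passage $X^{(m,m)}\to X^{(m+1,m+1)}$ as producing a single new summation variable $k'$, so that the final step is one application of the Vandermonde-type identity. But the four families in $X^{(m,m)}$ start at two different vertices ($i'_m+1$ for two of them, $i_m+1$ for the other two), so shortening to $X^{(m+1,m+1)}$ peels off two distinct segments of different lengths. The paper handles this by interposing $X^{(m-1,m)}$ between $X^{(m-1,m-1)}$ and $X^{(m,m)}$, introducing an auxiliary variable $p$ and applying Lemma~\ref{lem-binomial} \emph{twice}: once to eliminate the old $k$ in favor of $p$, and once to eliminate $p$ in favor of the new $k$. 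A single Vandermonde collapse of a ``double sum over $k$ and $k'$'' does not match the structure of the four binomials that actually arise; you have not shown, and the asymmetry of the two shortening segments strongly suggests it is not true, that the triple can be collapsed in one step. Without the intermediate $X^{(m-1,m)}$ and the second application of Lemma~\ref{lem-binomial}, the extra factorial ratios that build $\Lambda_{m}$ and $\Gamma_{i_1 i_m}$ do not appear in the claimed form.
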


\begin{proof} For $m=1$ this is the statement of Lemma~\ref{lem-band}. We prove the lemma by induction. Let $1<m\leq r$. Then $\chi_{\ut}\left(V\right)=$
\begin{align*}&\Lambda_{m-1}\Gamma_{i_1i_{m-1}}\sum_k\begin{pmatrix}t_{i_1}\\ t_{i_{m-1}}-k\end{pmatrix}\begin{pmatrix}n-t_{i_1}\\k\end{pmatrix} X^{(m-1,m-1)}_{t_{i_{m-1}}-k,k,t_{i_1}-t_{i_{m-1}}+k,n-t_{i_1}-k}(\ut')\displaybreak[0]\\
 =&\Lambda_{m-1}\Gamma_{i_1i_{m-1}}\sum_k\begin{pmatrix}t_{i_1}\\ t_{i_{m-1}}-k\end{pmatrix}\begin{pmatrix}n-t_{i_1}\\k\end{pmatrix}\sum_p\begin{pmatrix}t_{i_1}-t_{i_{m-1}}+k\\t_{i'_{m-1}}-t_{i_{m-1}}-p\end{pmatrix}\begin{pmatrix}n-t_{i_1}-k\\p\end{pmatrix}\\
 &(t_{i'_{m-1}}-t_{i_{m-1}})!\Gamma_{i_{m-1}i'_{m-1}}X^{(m-1,m)}_{t_{i'_{m-1}}-k-p,k+p,t_{i_1}-t_{i'_{m-1}}+k+p,n-t_{i_1}-k-p}(\ut'')\displaybreak[0]\\
 =&\Lambda_{m-1}\Gamma_{i_1i'_{m-1}}\sum_p\left(\sum_k\begin{pmatrix}t_{i_1}\\ t_{i_{m-1}}-k\end{pmatrix}\begin{pmatrix}n-t_{i_1}\\k\end{pmatrix}\begin{pmatrix}t_{i_1}-t_{i_{m-1}}+k\\t_{i_1}-t_{i'_{m-1}}+p\end{pmatrix}\begin{pmatrix}n-t_{i_1}-k\\n-t_{i_1}-p\end{pmatrix}\right)\\
 &(t_{i'_{m-1}}-t_{i_{m-1}})!X^{(m-1,m)}_{t_{i'_{m-1}}-p,p,t_{i_1}-t_{i'_{m-1}}+p,n-t_{i_1}-p}(\ut'')
\end{align*}
with \begin{align*}\ut''=(0,\ldots,0,t_{i'_{m-1}+1},\ldots,t_{i'_r+1}-t_{i_1},\ldots,t_{i_1-1}-t_{i_1}).\end{align*} Lemma~\ref{lem-binomial} yields $\chi_{\ut}\left(V\right)=$
\begin{align*}
  &\Lambda_{m-1}\Gamma_{i_1i'_{m-1}}\sum_p\begin{pmatrix}n-t_{i_1}\\p\end{pmatrix}\begin{pmatrix}t_{i_1}\\t_{i'_{m-1}}-p\end{pmatrix}\begin{pmatrix}t_{i'_{m-1}}\\t_{i_{m-1}}\end{pmatrix}\\
  &(t_{i'_{m-1}}-t_{i_{m-1}})!X^{(m-1,m)}_{t_{i'_{m-1}}-p,p,t_{i_1}-t_{i'_{m-1}}+p,n-t_{i_1}-p}(\ut'')\displaybreak[0]\\
 =&\Lambda_{m-1}\frac{t_{i'_{m-1}}!}{t_{i_{m-1}}!}\Gamma_{i_1i'_{m-1}}\sum_p\begin{pmatrix}n-t_{i_1}\\p\end{pmatrix}\begin{pmatrix}t_{i_1}\\t_{i'_{m-1}}-p\end{pmatrix}\sum_k\begin{pmatrix}t_{i'_{m-1}}-p\\t_{i_m}-k\end{pmatrix}\begin{pmatrix}p\\k\end{pmatrix}\\
   &(t_{i'_{m-1}}-t_{i_m})!\Gamma_{i'_{m-1}i_m}X^{(m,m)}_{t_{i_m}-k,k,t_{i_1}-t_{i_m}+k,n-t_{i_1}-k}(\ut')\displaybreak[0]\\
 =&\Lambda_{m-1}\frac{t_{i'_{m-1}}!}{t_{i_{m-1}}!}\Gamma_{i_1i_m}\sum_k\left(\sum_p\begin{pmatrix}n-t_{i_1}\\p\end{pmatrix}\begin{pmatrix}t_{i_1}\\t_{i'_{m-1}}-p\end{pmatrix}\begin{pmatrix}t_{i'_{m-1}}-p\\t_{i_m}-k\end{pmatrix}\begin{pmatrix}p\\k\end{pmatrix}\right)\\
  &(t_{i'_{m-1}}-t_{i_m})!X^{(m,m)}_{t_{i_m}-k,k,t_{i_1}-t_{i_m}+k,n-t_{i_1}-k}(\ut').
\end{align*}
Using Lemma~\ref{lem-binomial} again, we get $\chi_{\ut}\left(V\right)=$
\begin{align*}
 &\Lambda_{m-1}\frac{t_{i'_{m-1}}!}{t_{i_{m-1}}!}\Gamma_{i_1i_m}\sum_k\begin{pmatrix}t_{i_1}\\t_{i_m}-k\end{pmatrix}\begin{pmatrix}n-t_{i_1}\\k\end{pmatrix}\begin{pmatrix}n-t_{i_m}\\n-t_{i'_{m-1}}\end{pmatrix}\\
  &(t_{i'_{m-1}}-t_{i_m})!X^{(m,m)}_{t_{i_m}-k,k,t_{i_1}-t_{i_m}+k,n-t_{i_1}-k}(\ut')\displaybreak[0]\\
 =
 &\Lambda_m\Gamma_{i_1i_m}\sum_k\begin{pmatrix}t_{i_1}\\t_{i_m}-k\end{pmatrix}\begin{pmatrix}n-t_{i_1}\\k\end{pmatrix}X^{(m,m)}_{t_{i_m}-k,k,t_{i_1}-t_{i_m}+k,n-t_{i_1}-k}(\ut').
\end{align*}
\end{proof}

 \begin{cor}\label{cor-band} Let $S$, $V$, $\ut$ and $n$ as above. Then Equation~\eqref{gl-prop-band} holds.
\end{cor}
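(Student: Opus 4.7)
The plan is to invoke Lemma~\ref{lem-band2} at the maximal index $m=r$ and then evaluate the remaining sum by direct combinatorics. At $m=r$ the dimension vector $\ut'$ has zero entries on the initial segment $\{1,\ldots,i_r\}$, so the subrepresentations counted by $X^{(r,r)}_{\alpha,\beta,\gamma,\delta}(\ut')$ are supported on the ``last arm'' of the cycle running from $i_r$ around to $i_1$ through $i'_r$. The four summands in the defining direct sum of $X^{(r,r)}_{\alpha,\beta,\gamma,\delta}(\ut')$ are string modules on subquivers of type $A$, so by Proposition~\ref{prop-dir-sum} the Euler characteristic factors as a sum of products of Euler characteristics of Grassmannians of string modules.

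For each of these type $A$ factors, Corollary~\ref{cor-tree} together with Theorem~\ref{thm-tree-band}\eqref{part-tree-band-1} reduces the Euler characteristic to counting successor closed subquivers with prescribed dimension vector, which yields a product of binomial coefficients indexed by the local sources and sinks within the interval. Collecting these contributions, the inner term in Lemma~\ref{lem-band2} becomes a product of factors of the form $\binom{\cdot}{\cdot}$ ranging over the edges on the arc from $i_r$ to $i_1$, together with the two ``endpoint'' binomials $\binom{t_{i_1}}{t_{i_r}-k}$ and $\binom{n-t_{i_1}}{k}$ already present, plus the ones arising from decomposing at the source $i_r$ and the sink $i'_r$.

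Once this expansion is done, the sum over $k$ reduces to a Vandermonde-Chu convolution of the type treated in Lemma~\ref{lem-binomial}. Applying that identity eliminates the summation variable and contributes exactly the missing factor $\tfrac{(n-t_{i_r})!}{t_{i_r}!}\tfrac{t_{i'_r}!}{(n-t_{i'_r})!}$, which promotes $\Lambda_r$ into the full product $\prod_{k=1}^{r}\tfrac{(n-t_{i_k})!}{t_{i_k}!}\tfrac{t_{i'_k}!}{(n-t_{i'_k})!}$ demanded by Equation~\eqref{gl-prop-band}. The edge factors from the expansion combine with $\Gamma_{i_1 i_r}$ to yield $\prod_{i=1}^l\tfrac{1}{(\varepsilon_i(t_i-t_{i+1}))!}$, using the cyclic convention $t_{l+1}=t_1$ and the sign/vanishing convention for $\varepsilon_i(t_i-t_{i+1})<0$, which automatically sets terms to zero exactly when the relevant binomials vanish.

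The main obstacle is bookkeeping in the second step: one has to match the binomial coefficients produced by counting successor closed subquivers on each of the four modules $M(S_{i'_r+1,i'_r-1})$, $M(S_{i'_r+1,i_1-1})$, $M(S_{i_r+1,i'_r-1})$, $M(S_{i_r+1,i_1-1})$ (with multiplicities $\alpha,\beta,\gamma,\delta$ depending on $k$) against the factorials in the target formula, and to identify which terms conspire to cancel the internal $k$-dependence. I expect the cleanest way is to rewrite everything in factorial form first and then read off a single Vandermonde identity; a separate verification is needed at the boundary case $r=1$, where $\ut'$ degenerates and the sum collapses to a single Vandermonde step already essentially carried out in Lemma~\ref{lem-band}.
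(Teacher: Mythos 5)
Your proposal follows essentially the same route as the paper: invoke Lemma~\ref{lem-band2} at $m=r$, evaluate $X^{(r,r)}_{\alpha,\beta,\gamma,\delta}(\ut')$ by decomposing it into Euler characteristics of string-module Grassmannians that yield products of binomials, and then close the sum over $k$ with the Vandermonde-type identity of Lemma~\ref{lem-binomial} to produce $\Lambda_{r+1}\Gamma_{1,l+1}$. The small caveats (the ``missing factor'' is more precisely $\tfrac{(n-t_{i_1})!}{t_{i_r}!}\tfrac{t_{i'_r}!}{(n-t_{i'_r})!}$, matching the target product only after cyclic reindexing; and the $r=1$ case needs no separate treatment since $\Lambda_1$, $\Gamma_{i_1i_1}$ are empty products and Lemma~\ref{lem-band2} at $m=1$ reduces to Lemma~\ref{lem-band}) do not affect the correctness of the argument.
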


\begin{proof} We have to show $\chi_{\ut}\left(V\right)=\Lambda_{r+1}\Gamma_{1,l+1}$ with $t_{i_{r+1}}=t_{i_1}$. Lemma~\ref{lem-band2} implies \begin{align*}\chi_{\ut}\left(V\right)
 =&\Lambda_r\Gamma_{i_1i_r}\sum_k\begin{pmatrix}t_{i_1}\\ t_{i_r}-k\end{pmatrix}\begin{pmatrix}n-t_{i_1}\\k\end{pmatrix} X^{(r,r)}_{t_{i_r}-k,k,t_{i_1}-t_{i_r}+k,n-t_{i_1}-k}(\ut')\end{align*} with 
 \begin{align*}\ut'=(0,\ldots,0,t_{i_r+1}-t_{i_r},\ldots,t_{i'_r-1}-t_{i_r},t_{i'_r}-t_{i_1}-k,t_{i'_r+1}-t_{i_1},\ldots,t_{i_1-1}-t_{i_1}).\end{align*} So we have  $\chi_{\ut}\left(V\right)=$
\begin{align*}
 &\Lambda_r\Gamma_{i_1i_r}\sum_k\begin{pmatrix}t_{i_1}\\ t_{i_r}-k\end{pmatrix}\begin{pmatrix}n-t_{i_1}\\k\end{pmatrix}\begin{pmatrix}n-t_{i_1}-k\\t_{i'_r}-t_{i_1}-k\end{pmatrix} X^{(r,r)}_{t_{i_r}-k,t_{i_r'}-t_{i_1},t_{i_r'}-t_{i_r},n-t_{i'_r}}(\ut''')\displaybreak[0]\\
 =&\Lambda_r\Gamma_{i_1i_r}\sum_k\begin{pmatrix}t_{i_1}\\ t_{i_r}-k\end{pmatrix}\begin{pmatrix}n-t_{i_1}\\k\end{pmatrix}\begin{pmatrix}n-t_{i_1}-k\\t_{i'_r}-t_{i_1}-k\end{pmatrix} (t_{i'_r}-t_{i_1})!\Gamma_{i'_ri_1}(t_{i'_r}-t_{i_r})!\Gamma_{i_ri'_r}
\end{align*} with 
 \begin{align*}\ut'''=(0,\ldots,0,t_{i_r+1}-t_{i_r},\ldots,t_{i'_r-1}-t_{i_r},0,t_{i'_r+1}-t_{i_1},\ldots,t_{i_1-1}-t_{i_1}).\end{align*}
Using Lemma~\ref{lem-binomial} again, we obtain
\begin{align*}
 \chi_{\ut}\left(V\right)=&\Lambda_r\Gamma_{1l}\begin{pmatrix}n-t_{i_1}\\ n-t_{i'_r}\end{pmatrix}\begin{pmatrix}t_{i'_r}\\t_{i_r}\end{pmatrix} (t_{i'_r}-t_{i_1})!(t_{i'_r}-t_{i_r})!=
 \Lambda_{r+1}\Gamma_{1,l+1}.
\end{align*} 
\end{proof}

\section{Quiver flag varieties}\label{sec-flag}
In this section we explain and justify Example~\ref{ex-fla-Kron}. Let $B\colon Q\to Q$ be the identity winding. For each $\mu\in\CC$ there is an automorphism of the algebra $\CC Q$ such that $B_\ast(\lambda,n)$ is mapped to $B_\ast(\lambda-\mu,n)$. This is not necessarily a band module. So we can assume without loss of generality that $M$ is a string module. Let \begin{align*} T^{(n)}=&\left(\vcenter{\begin{xy}\SelectTips{cm}{}\xymatrix@-1pc{
  1^{(1)}\ar[dr]&&1^{(2)}\ar[dl]\ar[dr]&&&&1^{(n)}\ar[dl]\ar[dr]\\
  &2^{(1)}&&2^{(2)}&\ldots&2^{(n-1)}&&2^{(n)}}\end{xy}}\right),\\
U^{(n)}=&\left(\vcenter{\begin{xy}\SelectTips{cm}{}\xymatrix@-1pc{
  1^{(1)}\ar[dr]&&1^{(2)}\ar[dl]\ar[dr]&&&&1^{(n)}\ar[dl]\\
  &2^{(1)}&&2^{(2)}&\ldots&2^{(n-1)}}\end{xy}}\right).
\end{align*} For any subquiver $V$ of $T^{(n)}$ let \begin{align*}\udim(V)=\Big(\left|\left\{i\middle|1^{(i)}\in V_0\right\}\right|,\left|\left\{i\middle|2^{(i)}\in V_0\right\}\right|\Big).\end{align*}
 For any dimension vectors $\uc$ and $\ud$ of $Q$ we set \begin{align*}X_{\uc}(V)=\Big\{&R\subseteq V\Big|R\text{ is a successor closed subquiver of }V,\udim(R)=\uc\Big\},\\
X_{\uc,\ud}(V)=\Big\{&R\subseteq S\subseteq V\Big|R\in X_{\uc}(S),S\in X_{\ud}(V)\Big\}. \end{align*} Using Corollary~\ref{cor-tree-band}, it is enough to show the following equality.\begin{align*}&\left|X_{(1,2),(2,3)}\left(T^{(n)}\right)\right|\displaybreak[0]\\ =&\sum_{i=1}^n\left|\left\{\left(R\subseteq S\right)\in X_{(1,2),(2,3)}\left(T^{(n)}\right)\middle|1^{(i)}\in R_0\right\}\right|\displaybreak[0]\\
 =&\left|X_{(0,1),(1,2)}\left(T^{(n-1)}\right)\right|+\sum_{i=2}^n\left| X_{(1,1)}\left(U^{(i-1)}\dot\cup T^{(n-i)}\right)\right|\displaybreak[0]\\
 =&\begin{pmatrix}2\\1\end{pmatrix}\left|X_{(1,2)}\left(T^{(n-1)}\right)\right|+\left(\left| X_{(0,1)}\left(T^{(n-2)}\right)\right|+\left| X_{(1,1)}\left(T^{(n-2)}\right)\right|\right)\\+&\sum_{i=3}^{n-1}\left(\left| X_{(1,1)}\left(U^{(i-1)}\right)\right|+\left| X_{(1,1)}\left(T^{(n-i)}\right)\right|\right)+\left| X_{(1,1)}\left(U^{(n-1)}\right)\right|\displaybreak[0]
\\
 =&2\left(\begin{pmatrix}n-2\\1\end{pmatrix}+\begin{pmatrix}n-2\\1\end{pmatrix}\right)+\left(\begin{pmatrix}n-2\\1\end{pmatrix}+1\right)+(n-3)\left(2+1\right)+2\displaybreak[0]\\ =&8(n-2)
\end{align*}

\section{Ringel-Hall algebras} \label{sec-RH-alg} 

\subsection{Reduction to indecomposable representations}\label{sec-red-ind}
In this section we look at the products of functions of the form $\Eins_{\uF,\uB,\un}$ in $\mathcal H(A)$, and we show that these are determined by the images of indecomposable $A$-modules.

\begin{lem}\label{lem-ind-enough}
Let $A$ be a finite-dimensional $\CC$-algebra, $f,g\in C(A)$ and $M$ and $N$ be $A$-modules. Then \begin{align*} (f\ast g)(M\oplus N)={\sum}_{i,j}\left(f^{(1)}_i\ast g^{(1)}_j\right)(M)\left(f^{(2)}_i\ast g^{(2)}_j\right)(N),\end{align*} where $\Delta(f)=\sum_if^{(1)}_i\otimes f^{(2)}_i$ and $\Delta(g)=\sum_jg^{(1)}_j\otimes g^{(2)}_j$.
\end{lem}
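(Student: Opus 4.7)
My plan is to recognize the statement as the bialgebra compatibility $\Delta(f \ast g) = \Delta(f) \cdot \Delta(g)$ evaluated at the pair $(M,N)$: the left-hand side of the lemma is $\Delta(f \ast g)(M,N)$ by the definition of $\Delta$, while the right-hand side is $(\Delta(f) \cdot \Delta(g))(M,N) = \sum_{i,j}(f^{(1)}_i \ast g^{(1)}_j)(M)(f^{(2)}_i \ast g^{(2)}_j)(N)$ for the product on $C(A) \otimes C(A)$ induced componentwise by $\ast$. Since the multiplicativity of $\Delta$ is part of the Hopf algebra structure on $C(A)$ cited from Joyce in Section~\ref{sec-dfn-RH}, the lemma would follow formally. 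I would still prefer to exhibit the statement directly via the grading technology of Section~\ref{sec-Grad}, both to make the argument self-contained and to emphasise that the coproduct axiom is itself a fixed-point computation.

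For the direct proof, I would fix bases of $M$ and of $N$ and consider the R-grading $\partial$ on $M \oplus N$ placing the basis of $M$ in degree $0$ and the basis of $N$ in degree $1$. The induced one-parameter subgroup $\varphi_\partial(\lambda)$ acts as $\id_M$ on $M$ and as $\lambda \cdot \id_N$ on $N$, hence as an automorphism of $M \oplus N$ as a $Q$-representation. Consequently, for any $\GL(\CC)$-stable constructible $f, g$, the integrand $L \mapsto f(L)\, g((M \oplus N)/L)$ on $\Gr(M \oplus N)$ is invariant under the $\CC^\ast$-action and hence constant on orbits. By Theorem~\ref{thm-Grad} together with Lemma~\ref{lem-stab-homo}, the Euler-characteristic integral that defines $(f \ast g)(M \oplus N)$ localises onto the fixed-point locus, which consists precisely of the subrepresentations of the form $L = L_M \oplus L_N$ with $L_M \subseteq M$ and $L_N \subseteq N$.

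This fixed-point locus is identified with a disjoint union over $(\ud,\uc)$ of $\Gr_{\ud}(M) \times \Gr_{\uc}(N)$, under which $(M \oplus N)/L$ corresponds to $(M/L_M) \oplus (N/L_N)$. Substituting the finite expansion $f(L_M \oplus L_N) = \Delta(f)(L_M, L_N) = \sum_i f^{(1)}_i(L_M) f^{(2)}_i(L_N)$ and the analogous expansion for $g$, and applying Fubini for the Euler characteristic over $\Gr(M) \times \Gr(N)$, the integral factors into the product
\begin{align*}
\sum_{i,j}\Bigl(\int f^{(1)}_i(L_M) g^{(1)}_j(M/L_M)\, d\chi\Bigr)\Bigl(\int f^{(2)}_i(L_N) g^{(2)}_j(N/L_N)\, d\chi\Bigr),
\end{align*}
which by definition of $\ast$ is exactly the right-hand side. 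The main technical point is the Fubini factorisation for Euler characteristic on a product of quiver Grassmannians; the finiteness of the tensor expansions of $\Delta(f)$ and $\Delta(g)$ is ensured by the hypothesis $f, g \in C(A)$, which places $\Delta(f)$ and $\Delta(g)$ in $C(A) \otimes C(A)$. Once those two ingredients are in hand, the manipulation is routine.
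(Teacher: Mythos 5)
Your first paragraph is exactly the paper's proof: the lemma is the bialgebra compatibility $\Delta(f\ast g)=\Delta(f)\cdot\Delta(g)$ evaluated at $(M,N)$, and the paper simply cites that $C(A)$ is a bialgebra (via Joyce, as stated in Section~\ref{sec-dfn-RH}) so that $\Delta$ is an algebra morphism. Your second, direct argument is a genuinely different and correct route: rather than invoking the Hopf-algebra axiom as a black box, you re-derive it in this instance by the same R-grading localisation that underlies Proposition~\ref{prop-dir-sum}. The $\CC^\ast$-action $\varphi_\partial$ coming from the R-grading is an automorphism of $M\oplus N$, so the integrand $L\mapsto f(L)\,g\big((M\oplus N)/L\big)$ is constant on orbits; localisation onto the fixed locus $\bigsqcup_{\ud,\uc}\Gr_{\ud}(M)\times\Gr_{\uc}(N)$ together with the finite expansions $f(L_M\oplus L_N)=\sum_i f^{(1)}_i(L_M)f^{(2)}_i(L_N)$, $g(M/L_M\oplus N/L_N)=\sum_j g^{(1)}_j(M/L_M)g^{(2)}_j(N/L_N)$ and Fubini for the Euler-characteristic integral gives the claimed factorisation. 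What the paper's citation buys is brevity; what your direct proof buys is self-containment and makes explicit that the coproduct axiom for $C(A)$ is itself a fixed-point computation in the spirit of Riedtmann. One small caution worth making explicit: the localisation step uses that each of the (finitely many) fibres $\{L : f(L)=a,\ g((M\oplus N)/L)=b\}$ is a locally closed, $\CC^\ast$-stable subset of $\Gr(M\oplus N)$, so that Proposition~\ref{prop-Fixpkt} applies fibrewise before summing the weighted Euler characteristics; you gesture at this but it deserves a sentence.
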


\begin{proof}
 Since $C(A)$ is a bialgebra the comultiplication $\Delta$ is an algebra homomorphism.
\end{proof}

\begin{ex}\label{ex-ind-enough}
 Let $\uF$ be a tuple of trees, $\uB$ a tuple of bands and $\un$ a tuple of positive integers. Then
  \begin{align*}\Delta(\Eins_{\uF,\uB,\un})=\sum_{\uF^{(1)}\dot\cup\uF^{(2)}=\uF,\uB^{(1)}\dot\cup\uB^{(2)}=\uB,\un^{(1)}\dot\cup\un^{(2)}=\un}\Eins_{\uF^{(1)},\uB^{(1)},\un^{(1)}}\otimes \Eins_{\uF^{(2)},\uB^{(2)},\un^{(2)}}.\end{align*} In this example we have been a little bit lazy: $\Eins_{\uF,\uB,\un}$ is not necessarily in $C(A)$, but we can extend the comultiplication in a natural way to all functions of the form $\Eins_{\uF,\uB,\un}$.
\end{ex}

\subsection{Proof of Theorem~\ref{thm-RH}}
We consider the products of functions of the form $\Eins_{\uF,\uB,\un}$ in $\mathcal H(A)$. By Theorem~\ref{thm-tree-band} and Lemmas~\ref{lem-stab-tree} and~\ref{lem-stab-band}, we only have to use the representation theory of quivers of type $A_l$ and $\widetilde A_{l-1}$, to calculate the Euler characteristics of the occurring varieties.\hfill $\Box$

 \subsection{Proof of Proposition~\ref{prop-RH-1}\eqref{part-RH-1}} We can use Theorem~\ref{thm-RH} to compute the product $\left(\Eins_{\uF,\uB,\un}\ast \Eins_{\uF',\uB',\un'}\right)(F_\ast(V))$ in $\mathcal H(A)$.  Let $Q$ be a tree, $V$ a $Q$-representation with $\dim_\CC(V_i)=1$ for all $i\in S_0$. So $V$ is a tree module. It is enough to compute $\left(\Eins_{\uF,\uB,\un}\ast \Eins_{\uF',\uB',\un'}\right)(V)$ in $\mathcal H(\CC Q)$. All sub- and factormodules of $V$ are again tree modules. If $\left(\Eins_{\uF,\uB,\un}\ast \Eins_{\uF',\uB',\un'}\right)(V)\neq0$, then $l(\uB)=l(\uB')=0$.\hfill $\Box$

\subsection{Proof of Proposition~\ref{prop-RH-1}\eqref{part-RH-2}}
 We use again Theorem~\ref{thm-RH}. So we can assume without loss of generality that $A=\CC Q$, where $Q$ is a quiver of type $\tilde A_{l-1}$.  All $Q$-modules are string or band modules $B'_\ast(V')$ such that $B'\colon Q\to Q$ is the identity winding.
 If $\left(\Eins_{\uF,\uB,\un}\ast \Eins_{\uF',\uB',\un'}\right)(V)\neq0$ with a band module $V$, then $l(\uB),l(\uB')\leq1$ holds by Remark~\ref{rem-eind-band}.

The equality $l(\uF)=l(\uF')$ can be shown by induction. Let $V$ be a band module and $U$ a submodule, which is isomorphic to a string module. It is enough to show that for the representation $V/U=(W_i,W_a)_{i\in Q_0,a\in Q_1}$ the equality \begin{align*}\dim(V/U)-1={\sum}_{a\in Q_1}\rk(W_a)\end{align*} holds, where $\rk(W_a)$ is the rank of the linear map $W_a$. This is clear since $V$ is a band and $U$ a string module with $\udim(U)\notin\ZZ(1,\ldots,1)$.\hfill $\Box$

\subsection{Proof of Proposition~\ref{prop-RH-2}\eqref{part-RH-3}} Let $M:=B_\ast(\lambda,m)$, $\pi\colon M\to B_\ast(\lambda,m-n)$ a projection, $K:=\bigoplus_iF^{(i)}_\ast(V)$ and $K':=\bigoplus_iF'^{(i)}_\ast(V)$. By Remark~\ref{rem-eind-band}, there exists a unique $U\subseteq B_\ast(\lambda,m)$ with $U\cong B_\ast(\lambda,n)$, so we can assume $B_\ast(\lambda,n)\subseteq B_\ast(\lambda,m-n')\subseteq B_\ast(\lambda,m)$. Define the varieties
\begin{align*}
 X:=&\Big\{U\subseteq M\Big|U\cong B_\ast(\lambda,n)\oplus K,M/U\cong  B_\ast(\lambda,n')\oplus K'\Big\}\\
 \overline X:=&\Big\{V\subseteq \overline M\Big|V\cong K,\overline M/V\cong K'\Big\}
\end{align*} with $\overline U:=\big(U\cap B_\ast(\lambda,m-n')\big)/ B_\ast(\lambda,n)$ for all $B_\ast(\lambda,n)\subseteq U\subseteq M$ and an algebraic morphism $\phi\colon X\to \overline X$ by $U\mapsto\overline U$. Using Remark~\ref{rem-eind-band} again,  $B_\ast(\lambda,n)\subseteq U\subseteq B_\ast(\lambda,m-n')$ for all $U\in X$. So $\phi$ is well-defined and injective.

 Let $V\in \overline X$. Since $V\cong K$ and  $\overline M/V\cong K'$ we have $B_\ast(\lambda,m-n')/\pi^{-1}(V)\cong K'$ and $M/B_\ast(\lambda,m-n')\cong B_\ast(\lambda,n')$. There exist two short exact sequences
$$0\to B_\ast(\lambda,n)\to\pi^{-1}(V)\to K\to 0$$
$$0\to K'\to M/\pi^{-1}(V)\to  B_\ast(\lambda,n')\to 0$$ Using Remark~\ref{rem-ART-Atilde}, we can assume without loss of generality that the direct summands of $K$ are preprojective $Q$-representations and the direct summands of $K'$ are preinjective ones. So both sequences split and this means that $\pi^{-1}(V)\cong B_\ast(\lambda,n)\oplus K$ and $M/\pi^{-1}(V)\cong K'\oplus B_\ast(\lambda,n')$. Thus $\pi^{-1}(V)\in X$ and $\overline{\pi^{-1}(V)}=V$. This shows that the Euler characteristics of both varieties are equal.\hfill $\Box$

\subsection{Proof of Proposition~\ref{prop-RH-2}\eqref{part-RH-4}} 

Proposition~\ref{prop-RH-2}\eqref{part-RH-4} follows inductively by the following lemma.
\begin{lem}\label{lem-prop-RH-2-4} Let $Q$, $m$, $B\colon Q\to Q$, $F\colon S\to Q$, $\uF(n)$, $\uF$ and $\uF'$ as in Proposition~\ref{prop-RH-2}\eqref{part-RH-4}. Let $M=B_\ast(\lambda,m)$ and $n\in\NN_{>0}$. Then
 \begin{align*}&\left(\Eins_{\uF(n)\dot\cup\uF}\ast \Eins_{\uF'}\right)(M)=
 \sum_{k\in\NN}\left(\left(\Eins_{\uF(n-1)}\otimes 1\right)\ast\Delta\left(\Eins_{\uF}\right)\ast\Delta\left(\Eins_{\uF'}\right)\right)(B_\ast(\lambda,m-k),I_k)
\end{align*} with $I_k$ is an indecomposable module and $\udim(I_k)=\udim(B_\ast(\lambda,k))-\udim(F_\ast(V))$.
\end{lem}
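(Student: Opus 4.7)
The plan is to peel off a distinguished copy $L \cong F_\ast(V)$ of the preprojective module from the submodule $N \subseteq M = B_\ast(\lambda,m)$ and to stratify the resulting configurations according to the embedding's depth. To set this up, recall from Remark~\ref{rem-eind-band} that $M$ admits a unique chain of band submodules
\[
0 \subset B_\ast(\lambda,1) \subset B_\ast(\lambda,2) \subset \cdots \subset B_\ast(\lambda,m) = M.
\]
For any preprojective embedding $L \hookrightarrow M$, I define its \emph{depth} to be the smallest $k$ such that $L$ factors through $B_\ast(\lambda,k)$; this is well defined by the uniqueness in Remark~\ref{rem-eind-band}.

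The key geometric input is that, for $L$ of depth $k$, the filtration $L \subseteq B_\ast(\lambda,k) \subseteq M$ induces a short exact sequence
\[
0 \longrightarrow B_\ast(\lambda,k)/L \longrightarrow M/L \longrightarrow M/B_\ast(\lambda,k) \longrightarrow 0,
\]
in which the right term is $B_\ast(\lambda,m-k)$ by Remark~\ref{rem-eind-band}, and the left term is the unique indecomposable preinjective $I_k$ with $\udim(I_k) = \udim(B_\ast(\lambda,k)) - \udim(F_\ast(V))$ by Remark~\ref{rem-ART-Atilde}. Since $I_k$ is preinjective, Remark~\ref{rem-ART-Atilde} forces this sequence to split, yielding
\[
M/L \cong B_\ast(\lambda,m-k) \oplus I_k.
\]
This is the structural fact powering the identity: after peeling off an $L$ of depth $k$, the quotient $M/L$ splits canonically into a smaller band summand and a preinjective summand.

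I would then use this splitting, together with the coproduct description of the Hall algebra on direct sums (Lemma~\ref{lem-ind-enough} and Example~\ref{ex-ind-enough}), to rewrite the contribution to the LHS at depth $k$ as a sum over ways to distribute the data $\uF$ and $\uF'$ between the two summands. This produces the expression $\Delta(\Eins_\uF) \ast \Delta(\Eins_{\uF'})$ evaluated at $(B_\ast(\lambda,m-k), I_k)$. The remaining $n-1$ copies of $F_\ast(V)$ appear through the pre-factor $\Eins_{\uF(n-1)} \otimes 1$ rather than through its coproduct: when $L$ is singled out as the distinguished copy of smallest depth, any further copy of $F_\ast(V)$ in the quotient whose image met $I_k$ non-trivially could have been chosen as $L$ at smaller depth, by the hypothesis $\dim F_\ast(V) \geq \dim F^{(i)}_\ast(V)$ and the uniqueness of indecomposable preinjectives in the relevant dimension. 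Summing over $k$ then produces the right-hand side.

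The main technical obstacle will be converting the set-theoretic depth stratification into an equality of Euler characteristics, since the depth strata are locally closed but not closed, and Euler characteristics do not naively add across such decompositions. I would handle this by invoking the grading machinery of Section~\ref{sec-Grad}: equip $M$ with a nice grading whose associated $\CC^\ast$-action preserves each $B_\ast(\lambda,k)$ and acts with distinct weights on the successive graded quotients, and then apply Theorem~\ref{thm-Grad} together with Lemmas~\ref{lem-stab-tree} and~\ref{lem-stab-band}, which verify stability on the relevant tree and band configurations. Passing to the fixed locus under this $\CC^\ast$-action stratifies the LHS by depth in an Euler-characteristic-preserving manner, and on each stratum the counting decomposes as described above, giving the desired sum over $k$ in the statement.
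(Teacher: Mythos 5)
Your structural picture of the proof is close to the mark: stratify by the depth $k$ of a distinguished preprojective copy $L\cong F_\ast(V)$ inside $M=B_\ast(\lambda,m)$, identify $M/L\cong B_\ast(\lambda,m-k)\oplus I_k$ by combining the unique chain of band submodules with the splitting of short exact sequences involving preinjectives (Remarks~\ref{rem-eind-band} and~\ref{rem-ART-Atilde}), and then distribute $\uF$ and $\uF'$ over the two summands via the coproduct, exactly as in the definition of $\overline{X_k^0}$ in the paper. This matches the paper's Equation~\eqref{gl-band-3}/\eqref{gl-band-4} mechanism and its use of an R-grading at the last step.

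However, your proposed technical tool for the depth stratification --- a nice grading on $M$ whose $\CC^\ast$-action distinguishes the Jordan filtration levels --- does not exist for $m\geq 2$, and this is a genuine gap. A nice grading $\partial$ must satisfy: if $M_a(e_j)$ has nonzero coefficient on $e_k$, then $\partial(e_k)-\partial(e_j)$ is determined by $a$ alone. For a band module, one arrow carries a Jordan block $\lambda I_m+N_m$, which has nonzero entries on both the diagonal and the superdiagonal; the niceness condition then forces $\partial$ to take the same value on consecutive filtration levels, so every nice grading is constant across the Jordan layers and cannot see the depth at all. Lemmas~\ref{lem-stab-tree} and~\ref{lem-stab-band} are therefore not applicable here in the way you intend. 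The paper instead uses the Gau\ss{} normal form from the proof of Lemma~\ref{lem-band}: it stratifies the variety $X$ into locally closed pieces $X_k$ by $(\uj(U))_1=k$ (no $\CC^\ast$-action is needed for additivity --- Euler characteristic is additive over locally closed decompositions, which your penultimate paragraph miscalls an obstruction), and then passes from $X_k$ to $X_k^0$ via the map of Equation~\eqref{gl-band-2}, $U\mapsto\prod_j(1+\lambda_{1j}(U)\psi^{j_1-j})^{-1}(U)$. This is a \emph{unipotent} automorphism of $M$ built from the module endomorphism $\psi$, giving an algebraic projection with affine fibers, not a torus action. Only after that reduction does the paper invoke a $\CC^\ast$-action, namely the R-grading coming from the direct sum $M/U_k\cong M^{(m-k)}\oplus I_k$.

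A second, smaller gap: your justification of the $\Eins_{\uF(n-1)}\otimes 1$ factor (``any further copy of $F_\ast(V)$ whose image met $I_k$ non-trivially could have been chosen as $L$ at smaller depth'') is a cancellation heuristic that is not substantiated and is not how the paper argues. The paper instead uses Remark~\ref{rem-ART-Atilde} to split all sequences $0\to F_\ast(V)\to\pi^{-1}(W)\to F_\ast(V)^{n-1}\oplus\bigoplus_iF^{(i)}_\ast(V)\to0$ (this needs $\dim F_\ast(V)\geq\dim F^{(i)}_\ast(V)$ and preprojectivity), and then feeds the resulting decomposition into the R-grading argument as in the proof of Proposition~\ref{prop-RH-2}\eqref{part-RH-3}. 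You should replace your heuristic with that Ext-vanishing argument and the R-grading identification of $X_k^0$ with $\overline{X_k^0}$.
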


\begin{proof} Let $M=(M_i,M_a)_{i\in Q_0,a\in Q_1}$, $\uc:=\udim(F_\ast(V))$, $\ud^{(i)}:=\udim\left(F^{(i)}_\ast(V)\right)$ for all $i$ and $\ud=n\uc+\sum_i\ud^{(i)}$. By Remark~\ref{rem-ART-Atilde}, we know $\Eins_{\uF(n)}\ast \Eins_{\uF}=\Eins_{\uF(n)\dot\cup\uF}$. So we have to calculate the Euler characteristic of
\begin{align*}X=\Big\{(0\subseteq U\subseteq W\subseteq M)\in{\F}_{n\uc,\ud}(M)\Big|\Eins_{\uF(n)}(U)=\Eins_{\uF}(W/U)=\Eins_{\uF'}(M/W)=1\Big\}.\end{align*}
We can use now the arguments of the proof of Lemma~\ref{lem-band} in Section~\ref{sec-tree-band-2}:

 Let $\{e_{ik}|i\in Q_0,1\leq k\leq m\}$ be a basis of $M$ such that the following holds.\begin{enumerate}
 \item  For all $1\leq p\leq m$, the vector space $M^{(p)}:=\langle e_{i,k}|i\in Q_0,1\leq k\leq p\rangle$  is a subrepresentation of $M$ isomorphic to $B_\ast(\lambda,p)$.

 \item There exists a nilpotent endomorphism $\psi$ of $M$ such that $\psi(e_{i1})=0$ and $\psi(e_{ik})=e_{i,k-1}$ for all $1<k\leq m$ and all $i\in Q_0$.
 \end{enumerate}
The quiver $S$ is of type $A_{|\uc|}$ such that $S_0=\{1,\ldots,|\uc|\}$ and $S_1=\{s_1,\ldots,s_{|\uc|-1}\}$.

Let $(0\subseteq U\subseteq W\subseteq M)\in X$. Then $U\cong F_\ast(V)^n$. Using the Gauß algorithm, there exists a unique tuple $\uj(U)=(1\leq j_1<j_2<\ldots<j_n\leq m)$ as in Equation~\eqref{gl-band-1} and unique $\lambda_{kj}(U)\in\CC$ such that the vector space $U$ is spanned by \begin{align*}\left(M_{F_1(s_1)}^{\varepsilon_1}\ldots M_{F_1(s_q)}^{\varepsilon_q}\right)^{-1}\left(e_{F_0(1),j_p}+{\sum}_{j=1,j\neq j_k\forall k}^{j_p-1}\lambda_{pj}(U)e_{F_0(1),j}\right)\end{align*} with $1\leq p\leq n$ and $0\leq q< |\uc|$. This is well-defined since all linear maps $M_i$ are isomorphisms. The variety $X$ can be decomposed into a disjoint union of locally closed subsets \begin{align*}X_k:=\Big\{(U\subseteq W)\in X\Big|\left(\uj(U)\right)_1=k\Big\}.\end{align*} For each $k$ let \begin{align*}X_k^0:=\Big\{(U\subseteq W)\in X_k\Big|\lambda_{1j}(U)=0\ \forall j\Big\},\end{align*} a locally closed subset of $X$. Equation~\eqref{gl-band-2} defines again an algebraic morphism $\pi\colon X_k\to X_k^0$ with affine fibers.

For each $k$ there exists a $U_k\subseteq M$ such that $U_k\cong F_\ast(V)$, $U_k\subseteq U$ for all $(U\subseteq W)\in X_k^0$ and $M/U_k\cong M^{(m-k)}\oplus I_k$ with an indecomposable module $I_k$ as in the lemma. Since $|\uc|\geq|\ud^{(i)}|$ for all $i$ and $F_\ast(V)$ is preprojective, all sequences of the form \begin{align*}0\to F_\ast(V)\to\pi^{-1}(W)\to F_\ast(V)^{n-1}\oplus{\bigoplus}_iF^{(i)}_\ast(V)\to 0\end{align*} with a projection $\pi\colon M\to M/F_\ast(V)$ and a submodule $W\subseteq M/F_\ast(V)$ split. Let \begin{align*}\overline{X_k^0}:=\Big\{
 &\Big(U\subseteq W\subseteq M^{(m-k)}\Big),\Big(W'\subseteq I_k\Big)\Big|\\
 &\Eins_{\uF(n-1)}(U)=\Eins_{\uF}(W/U\oplus W')=\Eins_{\uF'}\left(M^{(m-k)}/W\oplus I_k/W'\right)=1\Big\}.
\end{align*} Using an R-grading, we can conclude, as in the proof of  Proposition~\ref{prop-RH-2}\eqref{part-RH-3}, that \begin{align*}\chi\left(X_k^0\right)=\chi\left(\overline{X_k^0}\right)=\left(\left(\Eins_{\uF(n-1)}\otimes 1\right)\ast\Delta\left(\Eins_{\uF}\right)\ast\Delta\left(\Eins_{\uF'}\right)\right)\left(M^{(m-k)},I_k\right)\end{align*} and \begin{align*}\chi\left(X\right)={\sum}_{k\in\NN}\left(\left(\Eins_{\uF(n-1)}\otimes 1\right)\ast\Delta\left(\Eins_{\uF}\right)\ast\Delta\left(\Eins_{\uF'}\right)\right)\left(M^{(m-k)},I_k\right).\end{align*}
\end{proof}

\section{String algebras}\label{sec-string-alg}

\subsection{Proof of Corollary~\ref{cor-RH-F-B}}
 If $A$ is a string algebra, then every indecomposable $A$-module is a string or a band module. So Corollary~\ref{cor-RH-F-B} follows directly from Proposition~\ref{prop-RH-1}, Lemma~\ref{lem-ind-enough} and Example~\ref{ex-ind-enough}.\hfill $\Box$

\subsection{Proof of Equation~\eqref{gl-RH-Kron}}  This can be proven by iterated use of Proposition~\ref{prop-RH-2}\eqref{part-RH-4}. We give here an alternative proof. By Section~\ref{sec-flag}, it is enough to show Equation~\eqref{gl-RH-Kron} for a string module $F_\ast(V)$ with dimension vector $\udim(B_\ast(\lambda,m))$. Using Theorem~\ref{thm-RH}, this can be computed by counting all orderings of the strings in $\uF$ and in $\uF'$.\hfill $\Box$

\subsection{Proof of Corollary~\ref{cor-RH-alg}}

 We use an induction over the dimension vectors of $Q$. Let $\ud$ be a dimension vector. Then the set \begin{align*}H_{\ud}:=\Big\{\Eins_{\uF,\uB,\un}\Big|\exists M\in\Rep(Q), \Eins_{\uF,\uB,\un}(M)\neq0, \udim(M)=\ud\Big\}\end{align*} is finite and the function $\Eins_{\ud}$ is the sum of all functions in $H_{\ud}$.

 It remains to show that each product $\Eins_{\uF,\uB,\un}\ast \Eins_{\uF',\uB',\un'}\in\mathcal H_{\ud}(A)$ is a linear combination of functions in $H_{\ud}$. Using Lemma~\ref{lem-ind-enough} and Example~\ref{ex-ind-enough}, we have to check that for all bands $B$ and $m\in\NN_{>0}$ the product \begin{align*}\Eins_{\uF,\uB,\un}\ast \Eins_{\uF',\uB',\un'}(B_\ast(\lambda,m))\end{align*} is independent of $\lambda\in\CC^\ast$. This is clear by Proposition~\ref{prop-RH-2}\eqref{part-RH-2} and an induction argument.\hfill $\Box$

\end{document}